\DeclareMathAlphabet{\mathpzc}{OT1}{pzc}{m}{it}
\newcommand{\ie}{i.\,e.}
\title{Structure-preserving finite-element schemes for the Euler-Poisson equations}
\begin{document}

\author{%
  Matthias~Maier\footnotemark[1]
  \and John~N.~Shadid\footnotemark[2] \footnotemark[3]
  \and Ignacio~Tomas\footnotemark[4] \footnotemark[5]
}

\maketitle

\renewcommand{\thefootnote}{\fnsymbol{footnote}}

\footnotetext[1]{%
  Department of Mathematics, Texas A\&M University, 3368 TAMU,
  College Station, TX 77843, USA (\email{maier@math.tamu.edu})}

\footnotetext[2]{%
  Sandia National Laboratories$^{1}$, P.O. Box 5800, MS 1320, Albuquerque,
  NM 87185, USA (\email{jnshadi@sandia.gov})}

\footnotetext[3]{%
  Department of Mathematics and Statistics, University of New Mexico, MSC01
  1115, Albuquerque, NM 87131, USA}

\footnotetext[4]{%
  Department of Mathematics and Statistics, Texas Tech University, 2500
  Broadway, Lubbock, TX 79409, USE (\email{igtomas@ttu.edu})}

\footnotetext[5]{Corresponding author}

\renewcommand{\thefootnote}{\arabic{footnote}}

\footnotetext[1]{%
  Sandia National Laboratories is a multimission laboratory managed and
  operated by National Technology \& Engineering Solutions of Sandia, LLC,
  a wholly owned subsidiary of Honeywell International Inc., for the U.S.
  Department of Energy's National Nuclear Security Administration under
  contract DE-NA0003526. This document describes objective technical
  results and analysis. Any subjective views or opinions that might be
  expressed in the paper do not necessarily represent the views of the U.S.
  Department of Energy or the United States Government.}

\begin{abstract}
  We discuss structure-preserving numerical discretizations for repulsive
  and attractive Euler-Poisson equations that find applications in
  fluid-plasma and self-gravitation modeling. The scheme is fully discrete
  and structure preserving in the sense that it maintains a discrete energy
  law, as well as hyperbolic invariant domain properties, such as
  positivity of the density and a minimum principle of the specific
  entropy. A detailed discussion of algorithmic details is given, as well
  as proofs of the claimed properties. We present computational experiments
  corroborating our analytical findings and demonstrating the computational
  capabilities of the scheme.
\end{abstract}

\begin{keywords}
  Euler-Poisson equations, operator splitting, invariant domain
  preservation, discrete energy balance
\end{keywords}

\begin{AMS} 65M22, 35L65, 35Q31
\end{AMS}


\section{Introduction}

In this manuscript we develop numerical schemes for the repulsive and
attractive Euler-Poisson equations. This is a system of equations that
combine the hyperbolic compressible Euler equations of gas dynamics that
describe the time evolution of a fluid state (consisting of pressure,
momentum and total energy) with the action of a scalar potential that in
turn depends on the time evolution of the density of the system. The
Euler-Poisson equations have found applications in the context of plasma
physics \cite{Sentis2014}, semiconductor device modeling \cite{Marko1990},
and vacuum electronics \cite{Wohl2005}. The equations are often used to model an
electron fluid subject to electrostatic forces. The Euler-Poisson system
is also routinely used in astrophysics \cite{Zeldovich1989} for modeling
large scale formation of galaxies due to self-gravitation.

Our goal is to develop a numerical method for the Euler-Poisson system that
is second order accurate and provably robust. By this we mean that the
fully discrete update procedure at each time-step is \emph{locally
well-posed}, implying: (a) that the numerical scheme is always able to
compute a new \emph{admissible} state (i.\,e., with positive density and
internal energy); (b) it preserves a discrete energy law; and (c) that
the linear algebra only involves symmetric positive definite problems,
while the time-step size is only subject to a \emph{hyperbolic CFL}
condition. Our approach is based on an operator splitting in order to
decouple the hyperbolic and elliptic subsystems. We consider a
fully-discrete analysis of the scheme, revealing the need of specific
choices of space and time discretization that we make precise in Section
\ref{sec:numerical_approach}.

The splitting approach allows us to relegate invariant domain preservation
entirely to the numerical scheme used for the hyperbolic system; see
Section~\ref{sec:fundamentals}. This leaves considerable freedom for the
specific choice of hyperbolic solver. In particular, one could choose a
numerical method that preserves all invariant sets, or a subset of the
invariant set properties, such as positivity of density and internal
energy, see \cite{Guer2018, GuePo2019}. The resulting scheme will be
invariant domain preserving if the hyperbolic solver preserves all
invariants. For the sake of completeness we briefly describe the hyperbolic
solver used in this manuscript in Appendix \ref{app:hyperbolic}.

\subsection{The Euler-Poisson system}
\label{subse:intro:governing_equations}
We consider a general model problem derived by coupling the compressible
Euler equations of gas dynamics to a scalar potential:
\begin{subequations}
  \label{Model}
  \begin{align}
    \label{RhoEqEP}
    \partial_t \den + \diver{}\mom &= 0 , \\
    \label{pEqEP}
    \partial_t \mom + \diver{}\big(\den^{-1} \mom \mom^\transp + I
    p\big) &= - \den \nabla\Epot - \frac{1}{\chartime} \mom , \\
    \label{TotMeEqEP}
    \partial_t \totme + \diver{}\Big(\frac{\mom}{\den} (\totme + \pre) \Big)
    &= - \nabla\Epot \cdot \mom - \frac{1}{\den\chartime} |\mom|^{2} ,
    \\
    \label{EfieldEqEPElliptic}
    - \Delta \Epot &= \alpha (\rho + \rho_b).
  \end{align}
\end{subequations}
Here, $\den(\xcoord,t) \in \mathbb{R}^+$ is the mass density,
$\mom(\xcoord,t) \in \mathbb{R}^d$, the momentum,
$\totme(\xcoord,t)\in\mathbb{R}^+$ the total energy, $\pre \in
\mathbb{R}$ denotes the thermodynamic pressure, and 
$\rho_b(\xcoord,t)\in\mathbb{R}$ denotes a prescribed background
density that, in contrast to the mass density $\den$, might attain negative
values. The balance of momentum and total energy equations \eqref{pEqEP} and
\eqref{TotMeEqEP} include a force caused by a scalar potential
$\Epot(\xcoord,t)$ whose time evolution in turn is coupled to
the density $\den(\xcoord,t)$ of the system. The model includes a
relaxation term $-\tfrac1\chartime\mom$ in the momentum equation where
$\chartime>0$ is a so-called relaxation time of the system. For example,
this could model resistive or collisional effects with an electrically
neutral species. By slight abuse of notation we set $\chartime= +\infty$
for the case of a vanishing relaxation effects. In case of a positive
coupling constant, $\alpha>0$, system \eqref{Model} is said to be
\emph{repulsive}, meaning, the force term $-\den \nabla\Epot$ in
\eqref{pEqEP} repels density accumulations. The corresponding case of a
negative coupling constant, $\alpha <0$, leads to an \emph{attractive}
system \eqref{Model} where density accumulations attract each other. We
close system~\eqref{Model} by prescribing a simple polytropic equation of
state where the pressure follows the ideal gas law, \ie, $p = (\gamma -
1)(\totme - \frac{1}{2}\rho^{-1}|\bv{m}|_{\ell^2}^2)$, with $\gamma = 5/3$.

Examples \ref{ex:electron_fluid} and \ref{ex:gravity} illustrate two
prototypical applications of the Euler-Poisson equations. Generally
speaking, mathematical models describing the dynamics of a ``density''
subject to its own self-consistent field is a somewhat universal theme in
mathematical physics \cite{Bardos2002, Gasser1997, Cances2003,
LionsCoulomb1987}. In this regard, the Euler-Poisson system can be viewed
as the simplest prototypical model that contains all necessary mechanisms.
Thus, developing and understanding numerical methods for the Euler-Poisson
system may open up a path for new ideas and development for other
self-consistent models appearing in quantum, molecular, and statistical
physics.

\begin{example}[Electron fluid plasma]
  \label{ex:electron_fluid}
  An important example of a repulsive system that can be expressed with
  \eqref{Model} is an electron fluid subject to an electrostatic force. Here,
  $\den$ takes the meaning of an electron mass density, and $\Epot$ is the
  electrostatic potential. The coupling constant in this case is given by
  $\alpha = \frac{1}{\varepsilon_0}\frac{\qe^2}{\me^2}$ where $\varepsilon_0$
  is the vacuum permittivity, $\qe$ is the specific electron charge (charge per
  unit particle), and $\me$ is the specific electron mass (mass per unit
  particle), $\Epot \in \mathbb{R}$ is the electric potential, and $\chartime$
  denotes a characteristic collision-time.
\end{example}

\begin{example}[Euler-Poisson gravity model]
  \label{ex:gravity}
  An example of an attractive system is given by an \emph{Euler-Poisson
  gravity model}. Here, the density $\den(\xcoord,t)$, momentum
  $\mom(\xcoord,t)$, and total mechanical energy $\totme(\xcoord,t)$ describe
  the time evolution of the matter of a celestial body subject to
  self-gravitation. The latter is expressed by the (classical) gravitational
  potential $\Epot(\xcoord,t)$ governed by \eqref{EfieldEqEPElliptic} with the
  coupling constant
  \begin{align*}
    \alpha=-4\pi\,G,
  \end{align*}
  where $G$ is the graviational constant, and where the relaxation terms
  have been removed from system \eqref{Model} by formally setting
  $\chartime=\infty$.
\end{example}

\subsection{Structure preservation and stability properties}
\label{subse:intro:motivation}

Developing numerical schemes for the Euler-Poisson system \eqref{Model} is
not a trivial task. It is the authors' impression that the current body of
mathematical literature on the topic for provably robust, fully discrete,
self-consistent schemes for the Euler-Poisson system is far from complete
at this point in time. In the mathematical literature, the repulsive
electrostatic case (of Example~\ref{ex:electron_fluid}) has been studied by
Degond and collaborators \cite{Crispel2005, Crispel2007, Degond2008,
Degond2012}. For the attractive gravitational case
(Example~\ref{ex:gravity}) we refer the reader to \cite{Vides2014,
Jiang2013}. It is worth mentioning the astrophysics literature reporting
computations of fluids subject to self-gravitational effects is vast.
However, precise descriptions of the \emph{fully discrete} numerical
schemes and the mathematical properties guaranteed by such schemes, in
general, is not provided. We refer the reader to \cite{Truelove1998,
Almgreen2010, Springel2010, Bryan2014} where schemes used in practice are
discussed with some detail.

The fundamental question we wish to pose and examine is: \emph{What
mathematical properties should a ``good'' numerical scheme of the
Euler-Poisson system possess?} To this end we collect a list of desirable
properties that the continuous PDE exhibits (at least formally; see
Section~\ref{sec:fundamentals}).

We start with two important properties that imply local well-posedness
regardless of the numerical scheme of choice. By this we mean that for each time
step, the discrete map $\{\den^n, \mom^{n}, \totme^n, \Epot^n\} \rightarrow
\{\den^{n+1}, \mom^{n+1}, \totme^{n+1}, \Epot^{n+1}\}$ is
computable\footnote{Implying that a catastrophic failure due to occurrence
of negative density, negative internal energy, or a lack of convergence of
iterative linear solvers does not occur. }:
\begin{itemize}
  \item[(i)]
    Preservation of invariant domain: positivity of the density and
    pointwise minimum principle of the specific entropy \cite{GuePo2019,
    Guer2021}. These conditions ensure that the hyperbolic subsystem is
    locally well-posed at each time step \cite{GuePo2019, Guer2021}.
  \item[(ii)]
    Linear systems arising due to semi-implicit time marching, and their
    discretization in space, shall be \emph{well-posed} for all
    physically admissible regimes, meaning that: all linear systems are
    invertible and/or do not exhibit any artificial rank-deficiency.
\end{itemize}
Properties (i) and (ii) ensure that all discrete subsystems of our
numerical scheme are well-posed. Assuming these conditions are met, we can
pursue additional properties that the local map
$\{\den^n,\mom^{n},\totme^n, \Epot^n\} \rightarrow
\{\den^{n+1},\mom^{n+1},\totme^{n+1}, \Epot^{n+1}\}$ should preserve:
\begin{itemize}
  \item[(iii)]
    Satisfaction of a discrete total-energy balance.
  \item[(iv)]
    Preservation of the Hamiltonian structure of the source-dominated
    regime; see Remark~\ref{rem:hamiltonian} in
    Section~\ref{sec:source_dominated}.
  \item[(v)]
    Satisfaction of a discrete Gauß law \eqref{EfieldEqEPElliptic} at the
    end of each time-step.
\end{itemize}
In addition a number of discrete properties arise from enforcing
compatibility with limiting equations \cite{Miller2019, Crispel2005,
Crispel2007, Jungel1999}, in particular in the repulsive case ($\alpha>0$).
The numerical scheme:
\begin{itemize}
  \item[(vi)]
    Should not require to resolve the electrostatic plasma-oscillation
    time-scale (see Remark~\ref{rem:plasma_frequency} in
    Section~\ref{sec:source_dominated}); or at least the plasma-oscillation
    should not be a source of numerical instabilities in the scheme
    \cite{Miller2019}.
  \item[(vii)]
    Should be \emph{asymptotic-preserving} with respect to the
    quasi-neutral regime \cite{Crispel2005,Crispel2007}.
  \item[(viii)]
    Should be \emph{asymptotic-preserving} with respect to the
    drift-diffusion model. The drift-diffusion model is formally achieved in
    the zero-relaxation limit ($\chartime \rightarrow 0^+$), see for instance
    \cite{Jungel1999} and references therein.
\end{itemize}
This is a very ambitious list of desirable mathematical properties for the
discrete scheme that, to the best of our knowledge, no single numerical
discretization can achieve simultaneously. For instance, we are not aware
of any fully discrete numerical scheme satisfying, both, properties
\textrm{(iii)} and \textrm{(v)} simultaneously. Similarly, we are not aware
of any fully discrete scheme satisfying properties \textrm{(vii)} and
\textrm{(iii)} at the same time, and we are not aware of the existence of any
self-consistent scheme satisfying \textrm{(viii)}. We briefly mention that
another very important property is the preservation of steady states. There is
indeed a very large body literature on well-balanced schemes for Euler equations
with gravitation, see for instance \cite{Kappe2014, Chandra2015} and references
therein, but the potential is assumed to be `given' (not computed in
a self-consistent manner) in these and related references.

In this manuscript we focus on developing numerical schemes for system
\eqref{Model} for which properties \textrm{(i)}-\textrm{(iv)} can always be
guaranteed. We also present a relaxation technique in order to enforce
property \textrm{(v)} at the expense of introducing a consistency error in
time. In every case the potential is computed self-consistently.

\subsection{Background and related literature}
\label{subse:intro:reformulation}

System \eqref{Model} may be understood as a hyperbolic PDE subject to an
elliptic constraint \eqref{EfieldEqEPElliptic}. As such it can be reformulated
in various different ways \cite{Pen1998, Vides2014, Jiang2013, Crispel2005,
Crispel2007}. For the sake of discussion we focus on two modifications of
the original system \eqref{Model}. First, we can reformulate \eqref{Model}
by rewriting writing the gradient of the potential in nonlocal form, viz.
$\nabla\Epot = \alpha \nabla(- \Delta)^{-1} \den$, and then substituting
into the remaining equations of $\eqref{Model}$ thereby eliminating the
last equation. This leads to the following \emph{nonlocal} formulation:

\begin{subequations}
  \label{ModelNonLocal}
  \begin{align}
    \partial_t \den + \diver{}\mom &= 0 , \\
    \partial_t \mom + \diver{}\big(\den^{-1} \mom \mom^\transp + I
    p\big) &= \boldsymbol{\mathit{f}}
            - \frac{1}{\chartime} \mom , \\
    \partial_t \totme + \diver{}\Big(\frac{\mom}{\den} (\totme + \pre) \Big)
    &= \frac{1}{\den}\boldsymbol{\mathit{f}} \cdot \mom -
    \frac{1}{\den\chartime} |\mom|^{2} ,
  \end{align}
\end{subequations}
where we have introduced a force $\boldsymbol{\mathit{f}} := - \alpha \den
(\nabla(-\Delta)^{-1}(\den + \den_b)$. Clearly, \eqref{ModelNonLocal}
is not any easier to solve than \eqref{Model}. Secondly, equation
\eqref{EfieldEqEPElliptic} can be rewritten as an evolution equation by
taking the time derivative and substituting \eqref{RhoEqEP}:
\begin{align}
  \label{EfieldEqEP}
  -\Delta \partial_t\Epot &= - \alpha\,\diver{}\mom
  \;+\; \alpha\partial_t\rho_{b}.
\end{align}
In this paper we exploit some aspects of the simplicity of
\eqref{ModelNonLocal}, since it can be viewed as just being the Euler
equations evolving the triple $[\den,\mom,\totme]^\transp$ subject to an
external force $\boldsymbol{\mathit{f}}$; see
Section~\ref{sec:fundamentals}. We also use the time dependent formulation
\eqref{EfieldEqEP} of the potential in order to avoid any kind of numerical
non-locality in the resulting hyperbolic scheme. This allows us to develop
a numerical scheme satisfying properties (i)--(iv). We also present a
relaxation technique in order to address property (v)---the preservation of
a discrete Gauß law---which may introduce some purely numerical dissipation
into the total energy-balance.

\begin{remark}
  \label{rem:modifications}
  It is certainly possible to reformulate system \eqref{Model} even
  further. A number of variants have been explored in the literature in
  this regard. For example, one approach is to rewrite the source terms
  containing $\rho\nabla\Epot$ in divergence-form leading to a nonlocal
  conservation law with fluxes that are nonlocal in space and time
  \cite{Pen1998, Vides2014, Jiang2013}:
  \begin{align}
    \nonumber
    \partial_t \den + \diver{}\mom &= 0 ,    \\
    \label{eq:nonlocal_condensed}
    \partial_t \mom + \diver{}\Big(\den^{-1} \mom \mom^\transp + I p +
    \alpha^{-1} (\nabla\Epot \nabla\Epot^\transp - \tfrac{1}{2}
    |\nabla\Epot|^2\mathbb{I})
    \Big) &= - \frac{1}{\chartime} \mom ,    \\
    \nonumber
    \partial_t E + \diver{}\Big( \frac{\mom}{\den} (E + \pre)
     + \frac{1}{2\alpha} (\Epot \nabla\partial_t\Epot
     - \partial_t \Epot \nabla\Epot)
      \Big)
    &= - \frac{1}{\den\chartime} |\mom|^{2} .
  \end{align}
  with $E = \totme + \frac{1}{2}\rho\Epot = \totme +
  \frac{1}{2\alpha}|\nabla\Epot|^2$. The conservation law structure is
  indeed interesting, but it has to be noted that such nonlocal fluxes have
  very different mathematical properties from those found for example for
  the Euler equations. This leads to a number of difficulties that can call
  to question the benefits of this approach. Most importantly, the force
  $\boldsymbol{\mathit{f}} := - \alpha \den (\nabla(-\Delta)^{-1}(\den +
  \rho_b))$, even if rewritten in divergence-form, exhibits an infinite
  speed of propagation: in this context the new balance law is unlikely to be
  hyperbolic. This is a roadblock if we want to guarantee robustness, as it
  would require to develop either a Godunov-type solver or at the very
  least finding a guaranteed estimate of the maximum wavespeed across the
  Riemann fan \cite{Toro2009, GuePo2016, GuePo2019, Toro2020}. This is
  problematic, since the maximum wavespeed across the Riemann fan is, by
  construction, infinite in this context. We are not the first ones to
  point-out related concerns, see for instance \cite[p. 48]{FrankShu1992}.

  Even if the issues mentioned above can be properly addressed, on a
  practical note, the discrete counterparts of the nonlocal fluxes in
  system \eqref{eq:nonlocal_condensed} exhibit a scaling of the form
  $\mathcal{O}(h^{-2})$ where $h$ denotes the mesh size. This poses the
  issue that explicit time-marching (which we are using) might then be
  subject to a \emph{parabolic} CFL condition of the form $\dt \sim h^2$.
\end{remark}

\begin{remark}
  A different approach one could pursue is to follow the steps of the
  pioneering work of \cite{Crispel2005,Crispel2007} on
  asymptotic-preserving schemes in the quasi-neutral regime (Property (vi)
  discussed in Section~\ref{subse:intro:motivation}). Such approaches,
  however, require major commitment to semi-implicit time integration
  of the \emph{hydrodynamical} subsystem. A situation for which---to the best
  of our knowledge---rigorously guaranteeing invariant domain preservation is
  a challenging and open mathematical problem; see \cite{Perth1996, Zhang2010,
  GuePo2019} and references therein.

  In particular, there are no general-purpose time-implicit schemes for
  nonlinear hyperbolic \emph{systems} of conservation laws with mathematical
  guarantees of local solvability and pointwise stability for the
  shock-hydrodynamics regime. This is particularly important if we are
  interested in the full-Euler system and not just barotropic models. Progress
  in this direction can be found in \cite{Herbin2011, Chalons2016,
  Badia2020, Tang2000, Crockatt2022}.
\end{remark}

Finally, in the vast majority of references presenting computations and/or
schemes for Euler-Poisson system, see for instance \cite{Crispel2005,
Crispel2007, Degond2008, Degond2012, Pen1998, Vides2014, Jiang2013,
Truelove1998, Almgreen2010, Springel2010, Bryan2014} and references therein,
the primary space discretization techniques are finite volumes for the
hyperbolic sub-system, and either finite differences and/or integral methods for
the elliptic operators, in cartesian meshes. However, during the last two
decades, there has been an explosive growth of versatile finite element
frameworks/libraries, see for instance \cite{dealII93, dealIIcanonical,
Fenics2015, Anderson2021}, capable of supporting a large number of
space-discretizations, linear solvers, preconditioners, and adaptivity
among many other features. There has also been significant development of
collocation methods that can solve hyperbolic systems of conservation laws
\cite{Guer2018, GuePo2019} using finite-element infrastructure while also
providing mathematically guaranteed robustness. In this sense the current
work is a significant point of departure from pre-existing literature: we
present new schemes that are meant to be coded entirely within the scope of
a finite element library. In particular, we use the finite element
library \texttt{deal.II} \cite{dealII93, dealIIcanonical} and exploit the 
mathematical framework of numerical schemes for hyperbolic systems of 
conservation laws developed in \cite{GuePo2019}.

\subsection{Paper organization}
\label{subse:intro:outline}

The remainder of the paper is organized as follows. We discuss fundamental
(formal) stability properties of the continuous PDE and limits of the PDE
in Section~\ref{sec:fundamentals}. The discretization approach of our
numerical scheme is discussed in Section~\ref{sec:numerical_approach}.
Section~\ref{sec:gauss} introduces postprocessing strategies to maintain
the discrete Gauß law. Section~\ref{sec:numerics} presents computational
results that illustrate the properties of the proposed numerical schemes.
We conclude with a short summary and outlook in
Section~\ref{sec:conclusion}.


\section{Fundamental stability properties}
\label{sec:fundamentals}

In preparation for Sections~\ref{sec:numerical_approach} and
\ref{sec:gauss} in which structure-preserving numerical schemes are
introduced, we first discuss some fundamental stability properties of the
PDE. We start with the Euler equations without source terms and then
proceed to the system \eqref{ModelNonLocal} and \eqref{EfieldEqEP}.


\subsection{Euler equations with forces}

We start by collecting a number of useful properties of the solution
$\state:=[\den,\mom,\totme]^T$ of \eqref{ModelNonLocal} when fixing the
force $\force$ and in the case of no relaxation, \ie, $\chartime=\infty$.
In order to simplify notation we rewrite \eqref{ModelNonLocal} in compact form
as follows:
\begin{align}
  \label{EulerWithSources}
  \frac{\partial}{\partial t}\state + \diver{}
  \flux(\state) = \bv{s}(\force),
\end{align}
where
\begin{align}\label{EulerForceSystem}
  \flux(\state) = \begin{bmatrix}
    \mom^\transp \\
    \den^{-1} \mom \mom^\transp + I p \\
    \den^{-1 } \mom^\transp (\totme + p)
  \end{bmatrix},
  \qquad
  \bv{s}(\force) =
  \begin{bmatrix}
    0 \\ \force \\ \den^{-1}\mom \cdot \force
  \end{bmatrix}.
\end{align}
Here, deviating from Section~\ref{subse:intro:reformulation}, the force
density $\force:= \force(\xcoord,t):\mathbb{R}^{d} \times \mathbb{R}
\rightarrow\mathbb{R}^{d}$ shall be an arbitrary prescribed vector field
possibly depending on the state $\state$, the space $\xcoord$ and time $t$.
Note that $\den^{-1}\mom \cdot \force$, acting on the energy equation, is
the power of the force $\force$ per unit volume.

\begin{lemma}[Tangent-in-time invariance of the density and internal energy]
  \label{lem:InteInvar} Let $\Psi(\state):\mathbb{R}^{d+2} \rightarrow
  \mathbb{R}$ be any arbitrary functional of the state satisfying the
  functional dependence $\Psi(\state):= \psi(\den, \inte (\state))$ where
  $\inte (\state) := \totme - \frac{|\mom|^2}{2 \den}$ is the internal
  energy per unit volume. Then we have that
  \begin{align}
    \label{resultinv}
    \nabla_{\state} \Psi(\state) \cdot \bv{s}(\force) \equiv 0 ,
  \end{align}
  where $\nabla_{\state}$ is the gradient with respect to the state, \ie,
  $\nabla_{\state} = \big[\frac{\partial}{\partial\den},
  \frac{\partial}{\partial\mom_1}, ...,
  \frac{\partial}{\partial\mom_d},
  \frac{\partial}{\partial\totme}\big]^\transp$.
\end{lemma}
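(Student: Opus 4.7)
The plan is a direct chain-rule computation, exploiting the fact that the source $\bv{s}(\force)$ is constructed so that its contribution to the total energy equation is exactly the power transferred to the kinetic part of $\totme$, leaving the internal energy untouched.

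First, I would write $\Psi(\state)=\psi(\den,\inte(\state))$ and apply the chain rule:
\begin{equation*}
\nabla_{\state}\Psi \;=\; \frac{\partial\psi}{\partial\den}\,\nabla_{\state}\den \;+\; \frac{\partial\psi}{\partial\inte}\,\nabla_{\state}\inte(\state).
\end{equation*}
Since $\den$ is just the first component of the state vector, $\nabla_{\state}\den=[1,0,\dots,0,0]^\transp$, and this vector pairs with $\bv{s}(\force)=[0,\force,\den^{-1}\mom\cdot\force]^\transp$ to give zero (the first slot of $\bv{s}(\force)$ is $0$). Thus the only nontrivial contribution comes from the $\inte$-derivative term.

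Next I would compute $\nabla_{\state}\inte$ for $\inte(\state)=\totme-\tfrac{|\mom|^2}{2\den}$ componentwise, yielding $\partial_{\den}\inte=\tfrac{|\mom|^2}{2\den^2}$, $\partial_{\mom}\inte=-\den^{-1}\mom$, and $\partial_{\totme}\inte=1$. Taking the dot product with $\bv{s}(\force)$ gives
\begin{equation*}
\nabla_{\state}\inte\cdot\bv{s}(\force) \;=\; \tfrac{|\mom|^2}{2\den^2}\cdot 0 \;+\; (-\den^{-1}\mom)\cdot\force \;+\; 1\cdot(\den^{-1}\mom\cdot\force) \;=\; 0,
\end{equation*}
so the claim follows because both terms in the chain-rule expansion annihilate $\bv{s}(\force)$.

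There is no real obstacle here; the identity is a tautology once one recognizes the algebraic design of $\bv{s}(\force)$. The only ``content'' is the observation that the internal-energy functional $\inte$ is chosen precisely so that its state-gradient is orthogonal to $\bv{s}(\force)$, and $\den$ itself is trivially unaffected by a source with vanishing continuity-equation component. I would close by remarking that, since the hyperbolic flux $\flux(\state)$ plays no role in the statement, the lemma records a purely pointwise property of the source: it encodes the physical fact that a volumetric body force does no work against internal degrees of freedom, so any state functional depending only on $(\den,\inte)$ has zero tangential rate of change along the source vector field.
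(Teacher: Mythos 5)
Your proposal is correct and follows essentially the same route as the paper's own proof: the identical chain-rule split into $\frac{\partial\psi}{\partial\den}\nabla_{\state}\den$ and $\frac{\partial\psi}{\partial\inte}\nabla_{\state}\inte$, with the first term annihilated by the vanishing continuity component of $\bv{s}(\force)$ and the second by the exact cancellation $-\den^{-1}\mom\cdot\force + \den^{-1}\mom\cdot\force = 0$. In fact your componentwise gradient $\partial_{\den}\inte = \tfrac{|\mom|^2}{2\den^2}$ is the correct value, whereas the paper's displayed $\nabla_{\state}\inte$ carries a harmless typo ($\tfrac{|\mom|^2}{\den^2}$, missing the factor $\tfrac12$) that is immaterial since that entry multiplies the zero first component of the source.
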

\begin{proof}
  Using the chain rule we observe that $\nabla_{\state} \Psi(\state) =
  \frac{\partial\psi}{\partial\rho} \nabla_{\state}\rho +
  \frac{\partial\psi}{\partial\inte} \nabla_{\state}\inte$, where
  \begin{align*}
    \nabla_{\state}\rho &= [1,0, ..., 0]^\transp \in \mathbb{R}^{d+2},
    \\
    \nabla_{\state}\inte &=
    \Big[\frac{|\mom|^2}{\den^2},-\frac{\mom_1}{\rho}, ...,
    -\frac{\mom_d}{\rho}, 1\Big]^\transp \in \mathbb{R}^{d+2}.
  \end{align*}
  Taking the product with $\bv{s}(\force)$ we get
  \begin{align*}
    \nabla_{\state} \Psi(\state) \cdot \bv{s}(\force)
    &= \frac{\partial\psi}{\partial\rho} \underbrace{\nabla_{\state}\rho
    \cdot \bv{s}(\force)}_{=\, 0} + \frac{\partial\psi}{\partial\inte}
    \nabla_{\state}\inte \cdot \bv{s}(\force)
    \\
    &= \frac{\partial\psi}{\partial\inte} (- \rho^{-1} \mom \cdot \bv{f} +
    \rho^{-1} \mom \cdot \bv{f}) = 0.
  \end{align*}
\end{proof}

\begin{remark}[Colloquial interpretation]\label{Colloq}
  Lemma \ref{lem:InteInvar} is simply saying that the evolution in time
  of an arbitrary functional of the state $\Psi(\state)$
  satisfying the functional dependence $\Psi(\state):= \psi(\den, \inte
  (\state))$ is independent of the force $\bv{f}$. This follows directly by
  taking the dot-product of \eqref{EulerWithSources} with $\nabla_{\state}
  \Psi(\state)$ to get
  \begin{align*}
    \nabla_{\state}\Psi(\state)\cdot \tfrac{\partial}{\partial t}\state
    \;=\;
    \tfrac{\partial}{\partial t}\Psi(\state)
    \;=\;
    - \nabla_{\state} \Psi(\state) \cdot \diver{} \flux(\state) +
    \underbrace{\nabla_{\state} \Psi(\state) \cdot
    \bv{s}(\force)}_{\equiv\,0} .
  \end{align*}
  In particular, this holds true when $\Psi(\state):= \inte (\state)$.
  Similarly, we can apply Lemma \ref{lem:InteInvar} to the specific internal
  energy $\specinte(\bv{u}) = \rho^{-1} \inte(\bv{u})$ since
  $\specinte(\bv{u})$ satisfies the functional dependence $\specinte(\bv{u}) =
  \psi(\den, \inte  (\state))$ as well.
\end{remark}

Lemma \ref{lem:InteInvar} and Remark \ref{Colloq} lead us to the following
immediate corollary.
\begin{corollary}\label{UnivCorollary}
  \label{EntropyDecl}
  Let $\spe(\rho,\specinte(\state))$ denote the specific internal entropy, and
  $\eta(\rho,\spe) = -\den\,\spe(\rho,\specinte(\state))$ denote the
  mathematical entropy. Then we have that
  \begin{align}\label{InvarianceEntropy}
    \nabla_{\state} \spe(\rho,\specinte) \cdot \bv{s}(\force) \equiv 0
    \quad\text{therefore}\quad
    \nabla_{\state} \eta(\rho,\spe)  \cdot
    \bv{s}(\force) \equiv 0.
  \end{align}
\end{corollary}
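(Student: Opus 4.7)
The plan is to recognize that both the specific internal entropy $\spe$ and the mathematical entropy $\eta$ fit the functional form required by Lemma~\ref{lem:InteInvar}, so that the corollary follows as an immediate application.

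First I would rewrite the specific internal energy as $\specinte(\state) = \rho^{-1}\inte(\state)$ and note that any function of $(\rho,\specinte)$ is automatically a function of $(\rho,\inte)$ via the composition $\psi(\rho,\inte) := \widetilde{\psi}(\rho, \rho^{-1}\inte)$. Thus $\spe(\rho,\specinte(\state))$ satisfies the hypothesis of Lemma~\ref{lem:InteInvar} with $\Psi = \spe$. Applying the lemma directly gives $\nabla_{\state}\spe \cdot \bv{s}(\force) \equiv 0$, which is the first claim.

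For the mathematical entropy I would write $\eta(\rho,\spe) = -\rho\,\spe(\rho,\specinte(\state))$ and observe that this too is expressible as $\psi(\rho,\inte(\state))$, since it is a product of $-\rho$ (a function of $\rho$ alone) with a function of $(\rho,\inte)$. Hence Lemma~\ref{lem:InteInvar} applies again and yields $\nabla_{\state}\eta \cdot \bv{s}(\force) \equiv 0$. Alternatively, one can simply expand using the product rule
\begin{align*}
  \nabla_{\state}\eta \cdot \bv{s}(\force)
  &= -\spe\,\underbrace{\nabla_{\state}\rho\cdot \bv{s}(\force)}_{=\,0}
  \;-\;\rho\,\underbrace{\nabla_{\state}\spe\cdot \bv{s}(\force)}_{=\,0}
  \;=\;0,
\end{align*}
which makes the conclusion transparent and does not require re-deriving anything.

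There is no real obstacle here: the only substantive content is the observation that $\specinte = \inte/\rho$ and $\eta = -\rho\,\spe$ preserve the functional dependence on $(\rho,\inte)$ that the lemma requires. Everything else is a direct invocation of Lemma~\ref{lem:InteInvar} (equivalently Remark~\ref{Colloq}).
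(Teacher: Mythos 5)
Your proposal is correct and follows essentially the same route as the paper: the paper's proof is the one-line observation that both $\spe$ and $\eta=-\den\,\spe$ satisfy the functional dependence $\psi(\den,\inte(\state))$ required by Lemma~\ref{lem:InteInvar} (via $\specinte=\inte/\den$), so the identity \eqref{resultinv} applies by the chain rule. Your product-rule expansion for $\eta$ is a harmless, slightly more explicit variant of the same argument.
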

The statement follows by using the chain rule, and identity
\eqref{resultinv} respectively. This implies that forces (more precisely
source terms $\bv{s}(\force)$ as described by
expression \eqref{EulerForceSystem}) can modify neither the specific,
nor the mathematical entropy. In this manuscript we are only concerned with
the case that $\spe(\rho,\specinte) = \ln (\specinte^{\frac{1}{\gamma -1}}
\rho^{-1})$, which corresponds with the ideal gas closure. However,
Corollary \ref{UnivCorollary} remains valid for any equilibrium equation of
state
\cite{Harten1998,Guermond2014}.

\subsection{Formal balance equation of the Euler-Poisson system}
\label{sec:EPBCs}

\begin{lemma}[Formal balance equation]
  \label{lemm:EPbalance}
  For the sake of simplicity let's assume that $\partial_t\rho_b\equiv0$, then
  the Euler-Poisson system \eqref{Model} satisfies the formal balance:
  \begin{multline}
    \label{intStabEP}
    \frac{\partial}{\partial t}
    \int_{\domain} \Big\{\totme + \frac{1}{2\alpha} |\nabla\Epot|^2\Big\} \dx
    \;+\;
    \int_{\partial\domain}
    \Big\{ \frac{\mom}{\den}(\totme + p)
    + \Epot\,\Big(\mom - \frac1\alpha\nabla\partial_t\Epot
    \Big)\Big\}\cdot\normal \ds
    \\
    \;=\;- \int_{\domain} \frac{1}{\den \chartime} |\mom|^{2}\dx.
  \end{multline}
\end{lemma}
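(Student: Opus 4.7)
The plan is to integrate the total-energy equation \eqref{TotMeEqEP} over $\Omega$ and then convert the ``Poisson source'' volume integral $\int_\Omega \nabla\phi\cdot\mom\,\dx$ into a time derivative of the electrostatic energy $\tfrac{1}{2\alpha}\int_\Omega |\nabla\phi|^2\,\dx$, modulo boundary contributions. The bridge between the two will be the time-differentiated Poisson equation \eqref{EfieldEqEP}, which (under $\partial_t\rho_b\equiv 0$) reduces to $-\Delta\partial_t\phi = -\alpha\diver\mom$.

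First I would integrate \eqref{TotMeEqEP} over $\Omega$ and apply the divergence theorem to the flux term $\diver\bigl(\tfrac{\mom}{\den}(E+p)\bigr)$, which produces the desired hydrodynamic boundary flux $\int_{\partial\Omega}\tfrac{\mom}{\den}(E+p)\cdot\normal\,\ds$. The dissipation term $-\int_\Omega \tfrac{1}{\den\chartime}|\mom|^2\dx$ appears immediately on the right-hand side. What remains is to handle the volume source $-\int_\Omega \nabla\phi\cdot\mom\,\dx$ coming from the coupling.

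The key manipulation is the following chain of integrations by parts:
\begin{align*}
\int_\Omega \nabla\phi\cdot\mom\,\dx
&= -\int_\Omega \phi\,\diver\mom\,\dx + \int_{\partial\Omega}\phi\,\mom\cdot\normal\,\ds \\
&= -\int_\Omega \phi\cdot\tfrac{1}{\alpha}\Delta\partial_t\phi\,\dx + \int_{\partial\Omega}\phi\,\mom\cdot\normal\,\ds \\
&= \int_\Omega \tfrac{1}{\alpha}\nabla\phi\cdot\nabla\partial_t\phi\,\dx - \int_{\partial\Omega}\tfrac{1}{\alpha}\phi\,\nabla\partial_t\phi\cdot\normal\,\ds + \int_{\partial\Omega}\phi\,\mom\cdot\normal\,\ds,
\end{align*}
where the first step uses the continuity equation \eqref{RhoEqEP} together with \eqref{EfieldEqEP} (i.e., $\diver\mom = -\partial_t\rho = \tfrac{1}{\alpha}\Delta\partial_t\phi$), and the second step integrates by parts again. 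Recognizing that $\tfrac{1}{\alpha}\nabla\phi\cdot\nabla\partial_t\phi = \partial_t\bigl(\tfrac{1}{2\alpha}|\nabla\phi|^2\bigr)$ and moving the resulting time derivative to the left-hand side gives exactly \eqref{intStabEP} after grouping the two boundary integrals into the stated form $\phi(\mom - \tfrac{1}{\alpha}\nabla\partial_t\phi)\cdot\normal$.

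There is no deep obstacle here; the proof is essentially a careful bookkeeping exercise. The only point that deserves care is the consistent use of the sign conventions for the coupling constant $\alpha$ (which may be negative in the attractive case) and the correct invocation of the time-differentiated constraint $-\Delta\partial_t\phi = -\alpha\diver\mom$, for which the assumption $\partial_t\rho_b\equiv 0$ is essential. All computations are formal and assume sufficient smoothness of the fields up to $\partial\Omega$ so that the integrations by parts are justified.
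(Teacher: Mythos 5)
Your proof is correct: integrating \eqref{TotMeEqEP} over $\domain$, eliminating $\diver\mom$ via \eqref{RhoEqEP} together with the time-differentiated Gau\ss{} law \eqref{EfieldEqEP} (which requires exactly the assumption $\partial_t\rho_b\equiv0$), and integrating by parts twice produces \eqref{intStabEP} with the sign bookkeeping valid for either sign of $\alpha$. The paper states this lemma without proof, and your computation is precisely the formal derivation the identity presupposes---in particular, the boundary contribution $\Epot\,(\mom - \frac{1}{\alpha}\nabla\partial_t\Epot)\cdot\normal$ can only arise through the double integration by parts you carry out, so your route coincides with the one implicitly intended.
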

Note that \emph{only} for the repulsive case $\alpha>0$ the scalar $\{\totme +
\frac{1}{2\alpha}|\nabla\Epot|^2\}$ is an energy density, and thus, Equation
\eqref{intStabEP} represents an energy-flux balance. In case of an
attractive Euler-Poisson system, that is $\alpha<0$, the expression
$\{\totme + \frac{1}{2\alpha}|\nabla\Epot|^2\}$ is more closely related to
a \emph{Lagrangian}, the difference of kinetic/total energy and potential
energy, see also Definition \ref{Def:source_dom}.

We call \eqref{intStabEP} a formal balance equation because it is valid
only under the assumption that the solution (as function of time) remains
sufficiently integrable in space. The hyperbolic character of \eqref{Model}
poses a fundamental challenge for mathematically deriving such
integrability properties. For example, in order for \eqref{intStabEP} to
remain a valid mathematical expression we would want to require that
$\nabla\Epot$ remains $L^2$-integrable in space. Suppose now that the Gauß
law \eqref{EfieldEqEPElliptic}, viz., $-\Delta\Epot = \alpha \rho$, is
satisfied for all time, then this requires $\rho \in \Hmone$. From well-known
Sobolev imbedding theorems (see for instance \cite[Ch.
5]{Evans1998}) we know for instance that $\|\rho\|_{\Hmone} \lesssim
\|\rho\|_{L^p(\Omega)}$, provided that $p \geq \frac{6}{5}$ in three
spatial dimensions, $d=3$. This means that the density should be $\rho \in
L^{p}(\Omega)$ with $p \geq \frac{6}{5}$ in order to guarantee that the
$\nabla\Epot$ remains in $\Ltwo$. This would be a questionable assumption,
since to the best of the authors knowledge, at present it is not possible
to guarantee such integrability for the density. Even worse, there is a
growing body of scientific literature \cite{Makino1992, Chae2008, Deng2002}
indicating that finite-time blow-up, \ie, the loss of integrability, indeed
happens.

Current mathematical evidence does not align in favor of identity
\eqref{intStabEP}. We nevertheless take a pragmatic standpoint in this
manuscript and make (an inequality version of) identity \eqref{intStabEP} a
guiding principle for our numerical algorithm development. In this sense
\eqref{intStabEP} should be understood as a regularity assumption that we
wish to maintain on the discrete level.

\subsection{Asymptotic source dominated regime}
\label{sec:source_dominated}

In order to simplify some arguments in the following discussion we restrict
ourselves to the case of a constant (in time) background density, i.\,e.,
$\partial_t\rho_b\equiv0$. A generalization of the numerical approach to
time-dependent background densities is discussed in
Section~\ref{subse:background_density}.

\begin{definition}[Source-dominated regime]\label{Def:source_dom}
  The \emph{source-dominated} regime
  of the repulsive ($\alpha>0$) Euler-Poisson system
  \eqref{RhoEqEP}-\eqref{TotMeEqEP} and \eqref{EfieldEqEP} is given by the
  limiting equations in which all fluxes are set to zero (assuming
  $\partial_t\rho_b\equiv0$):
  \begin{subequations}
    \label{ODEModel}
    \begin{align}
      \label{ODErhoEP}
      \partial_t \den &= 0 , \\
      \label{ODEpEP}
      \partial_t \mom &= - \den \nabla\Epot - \frac{1}{\chartime} \mom , \\
      \label{ODEmeEP}
      \partial_t \totme &= - \nabla\Epot \cdot \mom -
      \frac{1}{\den\chartime} |\mom|^{2} ,
      \\
      \label{ODEeEP}
      - \Delta \partial_t\Epot &= - \alpha\,\diver{}\mom.
    \end{align}
  \end{subequations}
\end{definition}
Note that in \eqref{ODEModel} only $\mom(\xcoord,t)$ and $\Epot(\xcoord,t)$
remain as coupled unknowns governed by \eqref{ODEpEP} and \eqref{ODEeEP}. A
direct computation shows that system \eqref{ODErhoEP}-\eqref{ODEmeEP}
satisfies the identity $\partial_t \big(\totme - \tfrac{|\mom|^2}{2\den}
\big) \equiv 0$, see also Lemma \ref{lem:InteInvar}. In addition, subsystem
\eqref{ODEpEP}-\eqref{ODEeEP} satisfies an integral balance equation, viz.,
\begin{multline}
  \label{intStabODEEP}
  \frac{\partial}{\partial t}
  \int_{\domain} \Big\{\frac{1}{2\rho}|\mom|^2 + \frac{1}{2\alpha}
  |\nabla\Epot|^2\Big\} \dx
  + \int_{\partial\domain}
  \Big\{
  \Epot\,\Big(\mom - \frac1\alpha\nabla\partial_t\Epot
  \Big)\Big\}\cdot\normal \ds
  \\
  \;=\;
  - \int_{\domain} \frac{1}{\den \chartime} |\mom|^{2}\dx.
\end{multline}
We note in passing that \eqref{intStabODEEP} is an energy-flux balance
equation \emph{only} for the repulsive case $\alpha>0$. In case of a
negative coefficient $\alpha<0$, the expression $\{\tfrac{1}{2\rho}|\mom|^2
+ \tfrac{1}{2\alpha}|\nabla\Epot|^2\}$ is a Lagrangian, the difference of
kinetic and potential energy of the system. For the case $\alpha>0$ we make
an important observation:

\begin{remark}[Plasma frequency of the repulsive system, $\alpha>0$]
  \label{rem:plasma_frequency}
  By neglecting relaxation terms (formally setting $\chartime=\infty$) and by
  taking the divergence of \eqref{ODEpEP} as well as the time derivative of
  \eqref{ODEeEP} we arrive at
  \begin{align*}
    \diver{}\partial_t \mom = - \diver{}(\den \nabla\Epot)
    \ \text{and} \
    - \Delta\partial_{tt}\Epot = - \alpha \diver{}\partial_t\mom.
  \end{align*}
  Substituting the first expression into the second one:
  \begin{align}
    \label{secondOrderSource}
    - \Delta\partial_{tt}\Epot - \alpha\,\diver{}(\den \nabla\Epot) = 0.
  \end{align}
  Further simplifying \eqref{secondOrderSource} by assuming a spatially
  uniform density $\den(\xcoord,t) = \den_0$, we arrive at a simple
  harmonic oscillator equation
  \begin{align*}
    -\Delta\big(\partial_{tt}\Epot - \omega_p^2\Epot\big) = 0,
  \end{align*}
  where we have introduced the \emph{plasma frequency}
  \begin{align*}
    \omega_p
    \;=\;
    \sqrt{\den_0\alpha}
    \;=\;
    \sqrt{\frac{\rho_0\qe^2}{\varepsilon_0\me^2}}
  \end{align*}
  for the case of an electron fluid as described in
  Example~\ref{ex:electron_fluid}. We note that the plasma frequency
  $\omega_p$ tends to be very large: $\omega_p$ typically takes values in
  the GHz regime for most high-energy density applications (see for
  instance \cite[p. 12]{Bitten2004} and \cite[p. 56]{Goed2004}).
\end{remark}

\begin{remark}
  \label{rem:hamiltonian}
  In the absence of boundary terms, e.g. $\mom \cdot \normal \equiv 0$ and
  $\nabla\Epot \cdot \normal \equiv 0$, source-system
  \eqref{ODEpEP}-\eqref{ODEeEP} is Hamiltonian, and as such it makes sense
  to use a scheme that preserves its Hamiltonian structure. A
  natural choice of time-integration scheme for such a task is the
  Crank-Nicolson scheme, i.e., the implicit mid-point rule. However,
  blindly using the Crank-Nicolson scheme in time combined with some ad-hoc
  discretization in space in general will not lead to a well-posed scheme. In
  the following section we introduce a space and time discretization for
  source-system \eqref{ODEpEP}-\eqref{ODEeEP} that is capable of preserving
  the proper local-dynamics and stability properties associated to this
  system.
\end{remark}


\section{A structure-preserving numerical discretization}
\label{sec:numerical_approach}

In order to gain some insight for deriving \emph{structure-preserving}
numerical schemes for \eqref{Model} we wish to start by discussing some of
the obstacles that one encounters when discretizing
\eqref{ODEpEP}-\eqref{ODEeEP} and the strategies to avoid them. The goal is
to preserve a discrete counterpart of the stability properties of
source-system \eqref{intStabODEEP}. The challenge is to come up with a
discretization strategy (in space) that leads to linear algebra systems
that are always well-posed (meaning invertible and reasonably well
conditioned).

In order to avoid some subtle technicalities we assume that
$\partial_t\rho_b\equiv0$, as well as $\bv{m}\cdot\normal \equiv 0$,
and either $\nabla\Epot \cdot\normal\equiv 0$ or $\Epot \equiv 0$ on the
entirety of the boundary $\partial\domain$. This eliminates the boundary
term in \eqref{intStabODEEP} thus making \eqref{ODEModel} an energetically
isolated system.

\subsection{Notation}
\label{subse:notation}

In the following we mainly consider a mesh of simplices (triangles or
tetrahedra) and focus our attention on continuous nodal finite elements
spaces $\FESpacePot$ for the potential, and nodal scalar-valued
discontinuous finite elements space $\FESpaceNodalScal$ for each component
of the hyperbolic systems:
\begin{subequations}
  \label{triangSpaces}
  \begin{align}
    \FESpacePot \;&=\; \big\{\EpotTest_h \in \mathcal{C}^0(\Omega) \;\big|\;
    \EpotTest_h\circ\locglobmap_\element \in
    \mathbb{P}^1(\widehat{\element})
    \;\forall\,\element \in \triangulation\big\} \, , \\
    \FESpaceNodalScal &= \big\{ \hyptest_h \in
    \Ltwo \;\big|\;
    \hyptest_h\circ\locglobmap_\element \in
    \mathbb{P}^1(\widehat{\element}) \;\forall \element \in \triangulation
     \big\}.
  \end{align}
\end{subequations}
Here, $\locglobmap_\element:\widehat{\element}\to\element$ denotes a
diffeomorphism mapping the unit simplex $\widehat{\element}$ to the
physical element $\element \in \triangulation$. For the momentum and
velocity we introduce a vector-valued finite element space
$\FESpaceNodalVec := [\FESpaceNodalScal]^d$. Alternatively, we may also
consider a mesh consisting of quadrilaterals (or hexahedra) with finite
elements spaces $\FESpacePot$ and $\FESpaceNodalScal$ defined by
\begin{subequations}
  \label{QuadSpaces}
  \begin{align}
    \FESpacePot \;&=\; \big\{\EpotTest_h \in \mathcal{C}^0(\Omega) \;\big|\;
    \EpotTest_h\circ\locglobmap_\element \in
    \mathbb{Q}^1(\widehat{\element})
    \;\forall\,\element \in \triangulation\big\},
    \\
    \FESpaceNodalScal &= \big\{ \hyptest_h \in
    \Ltwo \;\big|\;
    \hyptest_h\circ\locglobmap_\element \in
    \mathbb{Q}^1(\widehat{\element}) \;\forall \element \in \triangulation
     \big\}.
  \end{align}
\end{subequations}
Let $\potvertices$ denote the set of vertices and let $\{\EpotBasis_i\}_{i
\in \potvertices}$ denote the corresponding nodal basis of the finite
element space $\FESpacePot$. Similarly, let $\hypvertices$ denote the set
of standard (Gauß-Lobatto) support points of the discontinuous finite
element space $\FESpaceNodalScal$, with a corresponding nodal basis
$\{\HypBasisScal_i\}_{i \in \hypvertices}$ of $\FESpaceNodalScal$. That is,
for every $v_h \in \FESpaceNodalScal$ there is a unique set of scalar
coefficients $\{V_i\}_{i \in \hypvertices}$ such that $v_h = \sum_{i \in
\hypvertices} V_i \HypBasisScal_i$.

\begin{remark}
  For all finite element spaces the basis functions are generated using 
  the reference-to-physical map $\locglobmap_\element$. That is, Lagrangian
  shape functions are defined in the reference element satisfying the
  property $\widehat{\phi}_k(\widehat{\xcoord}_j) = \delta_{jk}$ where
  $\{\widehat{\xcoord}_k\}_{k \in \mathcal{N}}$ are the coordinates of the
  interpolation nodes in the reference element, and $\mathcal{N}$ denotes
  the set of integers used to identify such nodes (e.g. $\mathcal{N} =
  \{1:4\}$ for $\mathbb{Q}^1(\widehat{\element})$ elements in 2d). In each
  physical element $\element$, the shape functions can be defined using a
  local indexation $\phi_{\element,i}(\xcoord) :=
  \widehat{\phi}_i(\locglobmap_\element^{-1}(\xcoord))$ for all $i \in
  \mathcal{N}$. Note that, in general, due to the composition with the
  inverse map $\locglobmap_\element^{-1}$, mapped-finite elements are not
  polynomial in physical space.
\end{remark}

Let $\mathcal{C}^0(\triangulation)$ denote the space of scalar-valued piecewise
continuous functions on the triangulation, that is: functions with well-defined
point-values on each element. Similarly we define the space of piecewise
continuous vector-valued functions as $[\mathcal{C}^0(\triangulation)]^d$. Let
$f,g \in \mathcal{C}^0(\triangulation)$: we define the bilinear form $\langle f,
g\rangle:\mathcal{C}^0(\triangulation) \times \mathcal{C}^0(\triangulation)
\rightarrow \mathbb{R}$ as follows:
\begin{align}
    \label{LumpingDef}
    \langle f, g \rangle := \sum_{\element \in \triangulation} \sum_{i \in
    \mathcal{N}} f(\xcoord_i) g(\xcoord_i) w_{\element,i},
\end{align}
where $\xcoord_i = \locglobmap_\element(\widehat{\xcoord}_i)$ and
$w_{\element,i} := \int_{K} \phi_{\element,i}(\xcoord) \dx$, and with an
obvious extension when $\boldsymbol{f}, \boldsymbol{g} \in
[\mathcal{C}^0(\triangulation)]^d$. Whenever the bilinear form $\langle
\cdot, \cdot\rangle$ is applied to finite dimensional spaces
$\FESpaceNodalScal$, $\FESpaceNodalVec$, $\FESpacePot$, or
$\nabla\FESpacePot$ we call it a lumped inner product.
In addition, we introduce the short-hand notation $m_i := \int_{\Omega}
\phi_{i}(\xcoord) \dx$ for $i\in\hypvertices$ denoting the diagonal entries
of the lumped mass matrix of $\FESpaceNodalScal$.

\subsection{The method of lines and its potential shortcomings}
\label{sec:MOL}

For the sake of discussion we neglect all damping terms in
\eqref{ODEModel} in this section by formally setting $\chartime = +\infty$.
We reintroduce damping terms again in the full scheme in Section
\ref{subse:procedure}. We may start discretizing \eqref{ODEModel} by
introducing appropriate finite element spaces for approximations
$\Epot_h\in\FESpacePot$ for the potential $\Epot$, and
$\mom_h\in\FESpaceNodalVec$ for the momentum $\mom$. Testing \eqref{ODEpEP}
and \eqref{ODEeEP} with test functions $\EpotTest\in\FESpacePot$ and
$\momtest_h\in\FESpaceNodalVec$ and integrating by parts, as well as
discretizing in time with the Crank-Nicolson scheme leads to the following
scheme: For given $\rho_h^n, \mom_h^{n}, \Epot_h^{n}$,
\begin{align}\label{FeFunctions}
  \rho_h^{n} = \sum_{i \in \hypvertices} \rhoVect_i^n \HypBasisScal_i,
  \quad
  \mom_h^{n} = \sum_{i \in \hypvertices} \momVect_i^n \HypBasisScal_i,
  \quad
  \Epot_h^{n} = \sum_{i \in \potvertices} \Phi_i^n \EpotBasis_i,
\end{align}
we define $\vel_h^{n} := \sum_{i \in \hypvertices} \velVect_i^n \HypBasisScal_i$
with $\velVect_i^n = \frac{\momVect_i^n}{\rhoVect_i^n}$ for all $i \in
\hypvertices$. We want to find $\vel_h^{n+1}$ and $\Epot_h^{n+1}$ for time
$t_{n+1}$ solving
\begin{subequations}
  \label{UpVariEP}
  \begin{align}
    \label{eUpVariEP}
    \big(\nabla\Epot_h^{n+1} -
\nabla\Epot_h^{n},\nabla\EpotTest_h\big)_{\Ltwo}
    &\;=\;
    \frac{\dtn \alpha}{2}
    \big(\mom_h^{n+1} + \mom_h^{n},\nabla\EpotTest_h\big)_{\Ltwo},
    \\
    \label{pUpVariEP}
    \langle\vel_h^{n+1} - \vel_h^{n},\momtest_h\big>
    &\;=\;
    - \frac{\dtn}{2}
    \big( \nabla\Epot_h^{n+1} + \nabla\Epot_h^{n}, \momtest_h\big)_{\Ltwo},
  \end{align}
\end{subequations}
for all $\EpotTest_h \in \FESpacePot$ and $\veltest \in \FESpaceNodalVec$,
where $\dtn = t^{n+1} - t^n$ is the time step size at time step $n$. The
new velocity $\vel_h^{n+1}$ and momentum $\bv{m}_h^{n+1}$ shall be related
by
\begin{align*}
  \vel_h^{n+1} = \sum_{i \in \hypvertices} \velVect_i^{n+1}
  \HypBasisScal_i,
  \
  \mom_h^{n+1} = \sum_{i \in \hypvertices} \momVect_i^{n+1}
  \HypBasisScal_i,
  \ \text{with} \
  \momVect_i^{n+1} := \rhoVect_i^{n} \velVect_i^{n+1} \ \text{for all} \ i \in
  \hypvertices.
\end{align*}
Therefore, the unknowns of the linear problem \eqref{UpVariEP} are
$\{\velVect_i\}_{i \in \hypvertices}$ and $\{\EpotVect_i^n\}_{i \in
\potvertices}$. Momentum $\mom_h^{n+1}$ does not introduce additional
unknowns since it is related linearly to the velocity. This is somewhat 
more evident in the linear algebra context, see \eqref{LACfirst}. We also
note that the density field $\rho_h = \sum_{i \in \hypvertices} \varrho_i^{n}
\HypBasisScal_i$ is just given data for problem \eqref{UpVariEP} and does not
evolve during the source-update scheme.

\begin{lemma}\label{EnergyStabS1}
  Scheme \eqref{UpVariEP} is energy preserving in the following discrete
  sense:
  \begin{align*}
    \frac{1}{2}\big\|\nabla\Epot_h^{n+1}\big\|_{\Ltwo}^2 +
    \alpha\,
    \sum_{i \in \hypvertices} \frac{m_i}{2\varrho_i}
    \big|\momVect_i^{n+1}\big|_{\ell^2}^2
    \;=\;
    \frac{1}{2}\big\|\nabla\Epot_h^{n}\big\|_{\Ltwo}^2 +
    \alpha\,
    \sum_{i \in \hypvertices}  \frac{m_i}{2\varrho_i}
    \big|\momVect_i^{n}\big|_{\ell^2}^2.
  \end{align*}
\end{lemma}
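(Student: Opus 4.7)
The plan is a standard midpoint/Crank-Nicolson energy argument: test each of the two equations in \eqref{UpVariEP} against the arithmetic average of its solution at $t^n$ and $t^{n+1}$, then add, so that the cross-term coupling $(\nabla\Epot, \mom)$ cancels exactly and only telescoped squares remain on the left-hand side.

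More precisely, I would first choose $\EpotTest_h = \tfrac{1}{2}(\Epot_h^{n+1} + \Epot_h^{n}) \in \FESpacePot$ in \eqref{eUpVariEP}. Using the polarization identity $(a-b,a+b)_{\Ltwo} = \|a\|_{\Ltwo}^2 - \|b\|_{\Ltwo}^2$ on the left, this yields
\begin{align*}
\tfrac{1}{2}\bigl(\|\nabla\Epot_h^{n+1}\|_{\Ltwo}^2 - \|\nabla\Epot_h^{n}\|_{\Ltwo}^2\bigr)
\;=\; \tfrac{\dtn \alpha}{4}\bigl(\mom_h^{n+1} + \mom_h^{n},\,\nabla\Epot_h^{n+1} + \nabla\Epot_h^{n}\bigr)_{\Ltwo}.
\end{align*}
Next, I would test \eqref{pUpVariEP} with $\momtest_h = \tfrac{\alpha}{2}(\mom_h^{n+1} + \mom_h^{n}) \in \FESpaceNodalVec$. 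The resulting right-hand side is, up to sign, identical to the right-hand side above, so adding the two identities will make the coupling term vanish and produce the claimed balance, provided the tested kinetic term on the left reduces correctly.

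The key computational step is therefore the reduction of the lumped kinetic term. Because the lumped inner product $\langle\cdot,\cdot\rangle$ collapses to a sum of nodal products and because the density $\varrho_i = \varrho_i^n$ is frozen throughout the source update with $\momVect_i^k = \varrho_i\,\velVect_i^k$ for $k\in\{n,n+1\}$, I can write
\begin{align*}
\langle \vel_h^{n+1} - \vel_h^{n},\,\tfrac{\alpha}{2}(\mom_h^{n+1} + \mom_h^{n})\rangle
&= \tfrac{\alpha}{2}\sum_{i\in\hypvertices} m_i\,(\velVect_i^{n+1} - \velVect_i^{n})\cdot(\momVect_i^{n+1} + \momVect_i^{n}) \\
&= \tfrac{\alpha}{2}\sum_{i\in\hypvertices} m_i\,\varrho_i\,\bigl(|\velVect_i^{n+1}|_{\ell^2}^2 - |\velVect_i^{n}|_{\ell^2}^2\bigr) \\
&= \alpha \sum_{i\in\hypvertices} \tfrac{m_i}{2\varrho_i}\bigl(|\momVect_i^{n+1}|_{\ell^2}^2 - |\momVect_i^{n}|_{\ell^2}^2\bigr),
\end{align*}
which is the telescoped kinetic energy in the stated form. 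Summing this with the previous identity then gives the claim.

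The only subtlety, and thus the main thing to get right, is the second equality above: the polarization trick that turns $(\vel^{n+1}-\vel^{n})\cdot(\mom^{n+1}+\mom^{n})$ into $\varrho_i$ times a difference of squares relies crucially on (a) the nodal-collocation structure of the lumped inner product, and (b) the fact that $\varrho_i$ is the \emph{same} at both time levels during the source substep. Under these two conditions the argument is immediate; without either one, the cross terms would not cancel and no exact discrete energy identity would hold. Everything else is algebra.
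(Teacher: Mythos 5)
Your proposal is correct and follows essentially the same route as the paper's own (one-line) proof: test \eqref{eUpVariEP} with $\tfrac12(\Epot_h^{n+1}+\Epot_h^{n})$, test \eqref{pUpVariEP} with $\tfrac{\alpha}{2}(\mom_h^{n+1}+\mom_h^{n})$, and add so the coupling terms cancel. You merely spell out the lumped-kinetic-energy reduction (valid precisely because the lumped inner product is nodal and $\varrho_i$ is frozen across the substep, with $\momVect_i^k=\varrho_i\velVect_i^k$) that the paper leaves implicit.
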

\begin{proof}
  The statement follows by taking $\EpotTest_h = \tfrac{1}{2} (\Epot_h^{n+1} +
  \Epot_h^{n})$ in \eqref{UpVariEP} and taking $\veltest =
  \tfrac{1}{2}{\alpha}(\mom_h^{n+1} + \mom_h^{n})$ in
  \eqref{UpVariEP}, and adding both lines.
\end{proof}
The good news is that we have found a discrete analogue of the energy
stability property \eqref{intStabODEEP} for system \eqref{UpVariEP}.
However, we have to bring some attention to the algebraic difficulties that
are encountered when trying to actually solve algebraic system
\eqref{UpVariEP}. Introducing matrices
\begin{alignat*}{2}
  [\mathcal{M}^{\mathrm{L}}]_{ij} &=
    \delta_{ij} \int_{\domain} \HypBasisScal_i \dx, \quad
  &&[\mathcal{K}]_{ij} =
  \int_{\domain} \nabla\EpotBasis_i\cdot\nabla\EpotBasis_j \dx, \\
  [\mathcal{B}]_{ij} &=
    \int_{\domain} \nabla\EpotBasis_i \HypBasisScal_j\dx, \quad
  &&[\mathcal{D}_{\rho}]_{ij} = \delta_{ij} \, \rhoVect_i^n,
\end{alignat*}
system \eqref{UpVariEP} can be written as follows:
\begin{align}
  \begin{split}\label{LACfirst}
    \mathcal{K}\,(\EpotVect^{n+1}-\EpotVect^{n})
    \;&=\;
    \frac{\dtn\alpha}{2}\,\mathcal{B}\,
    \mathcal{D}_{\rho} (\velVect^{n+1} + \velVect^{n}),
    \\
    \mathcal{M}^{\mathrm{L}}\,(\velVect^{n+1} - \velVect^{n})
    \;&=\;
    -\frac{\dtn}{2}\, \mathcal{B}^T(\EpotVect^{n+1} + \EpotVect^{n}).
  \end{split}
\end{align}
We may proceed to eliminate the velocity vector by block-substitution arriving
at the following matrix system for the potential:
\begin{align*}
  &\underbrace{\Big(\mathcal{K}+\frac{\dtn^2\alpha}{4}\mathcal{B}
  \mathcal{D}_{\rho}\big(\mathcal{M}^L\big)^{-1}\mathcal{B}^T\Big)}
  _{:=\;S}\EpotVect^{n+1} =\\
  & \ \ \ \ \ \ \ \
  = \Big(\mathcal{K}-\frac{\dtn^2\alpha}{4}\mathcal{B}\mathcal{D}_{\rho}
  \big(\mathcal{M}^L\big)^{-1}\mathcal{B}^T\Big)\EpotVect^{n}
  -\frac{\dtn\alpha}{2}\mathcal{B}\momVect^n.
\end{align*}
Subject to appropriate boundary conditions, the block $\mathcal{K}$ is
positive-definite. Similarly, the Schur complement $S$ is symmetric and
invertible, since it is just a symmetric positive perturbation of the block
$\mathcal{K}$. However, the fact that $S$ is invertible may not have much
computational significance: for instance, the choice of an
\emph{equal-order} continuous bilinear finite-element ansatz,
$\FESpaceNodalScal=\FESpacePot=\mathbb{Q}^1$, invariably produces a
rank-deficient block $\mathcal{B}$, see for instance
\cite{Girault1986,ErnGuermond2004}.
As a consequence, the Schur complement $\mathcal{B}\mathcal{D}_{\rho}
\big(\mathcal{M}^L\big)^{-1} \mathcal{B}^T$ has a non-trivial kernel
which manifests graphically as the well-known \emph{checkerboard} modes.
This is of particular concern for the electron-fluid model discussed in
Example~\ref{ex:electron_fluid}, where the coupling constant $\alpha\gg1$
can be very large. In this case the Schur complement matrix $S$ is entirely
dominated by the rank-deficient second block (with the exception of
low-density regimes where $\rhoVect_i^n \ll 1$). Similar spurious defects
can also appear in ad-hoc finite-difference, finite-volume, or
discontinuous finite-element constructions. In other words, the
source-update scheme \eqref{UpVariEP} is stable in the sense that it
preserves the energy, but without a careful choice of finite element spaces
it may lead to an ill-conditioned algebraic system.

\begin{remark}[Stable choice of finite element spaces]
   The ill-conditioning induced by the rank deficiency of the
  block $\mathcal{B}$ can in principle be cured by using compatible,
  inf-sup stable finite element space tuples
  $\{\FESpaceNodalScal,\FESpacePot\}$, see \cite{Boffi2013}.
  For instance, a  possible choice is using a curl-conforming finite
  element space $\FESpaceNodalVec$ for the momentum supplemented by a
  choice $\FESpacePot$, satisfying the inclusion
  $\bv{\nabla}\FESpacePot\subset\FESpaceNodalVec$. If, in addition, we
  assume an almost uniform density distribution ($\mathcal{D}_{\rho}
  \approx \mathcal{I}$) it is possible to show that
  \begin{align*}
    \mathcal{B}\mathcal{D}_{\rho} \mathcal{M}^{-1}\mathcal{B}^T
    \sim \mathcal{K},
  \end{align*}
  which is a well-conditioned full rank matrix. However, the introduction
  of curl-conforming elements for $\FESpaceNodalVec$ creates a new
  problem: pretty much all mathematically rigorous%
  \footnote{In the sense that the schemes admit some provable, strong
  guarantees of pointwise stability.}
  schemes for hydrodynamical systems found in the literature are either based
  on \emph{nodal} discretizations, or require a notion of \emph{pointwise}
  state \cite{Perth1996, Zhang2010, Guer2018}. In particular, our intention is
  to use invariant domain preserving approximation techniques
  \cite{Guer2018,GuePo2019} which offer significant mathematical assurances of
  pointwise stability in the shock-hydrodynamics regime. To the best of
  our knowledge there is no overarching mathematical framework of discrete
  differential forms \cite{Bochev2006, Arnold2006} (or related concepts),
  capable of preserving maximum principles, invariant domain properties in
  phase space, or any other form of pointwise stability---numerical properties 
  which are required to approximate zero-viscosity limits and entropy-solutions.
\end{remark}

\subsection{Energy-stable source-update scheme: affine-simplicial mesh}
\label{subse:scheme_affine}

\begin{figure}[t]
  \subfloat[]{%
  \begin{tikzpicture}
    \node[
      draw,thick,rounded rectangle, minimum width=2cm,minimum
      height=2em](pde) {PDE};
    \node[
      draw,thick,rounded rectangle, minimum width=2cm,minimum height=2em,
      below=4.5em of pde](abs) {Algebraic block system};
    \node[
      draw,thick,rounded rectangle, minimum width=2cm,minimum height=2em,
      below=2.50em of abs](asc) {Algebraic Schur complement};
    \draw[thick,-stealth] (pde.south) -- (abs.north)
      node[midway,fill=white] {Discretize in time and space};
    \draw[thick,-stealth] (abs.south) -- (asc.north)
      node[midway,fill=white] {Eliminate variables};
  \end{tikzpicture}}
  \hspace{2em}
  \subfloat[]{%
  \begin{tikzpicture}
    \node[
      draw,thick,rounded rectangle,minimum width=2cm,minimum
      height=2em](pde) {PDE};
    \node[
      draw,thick,rounded rectangle,minimum width=2cm,minimum height=2em,
      below=4.5em of pde](abs) {Lower-triangular PDE system};
    \node[
      draw,thick,rounded rectangle,minimum width=2cm,minimum height=2em,
      below=2.50em of abs](asc) {PDE Schur complement};
    \draw[thick,-stealth,align=center] (pde.south) -- (abs.north)
      node[midway,fill=white] {Discretize in time\\[0.5em]Eliminate variables};
    \draw[thick,-stealth] (abs.south) -- (asc.north)
      node[midway,fill=white] {Discretize in space};
  \end{tikzpicture}}
  \centering
  \caption{The algebraic structure emerging from the method of lines,
    or Rothe's method (a) compared to the algebraic structure when the
    elimination step is performed on a semi-discrete level (b).}
  \label{fig:discretization_approaches}
\end{figure}
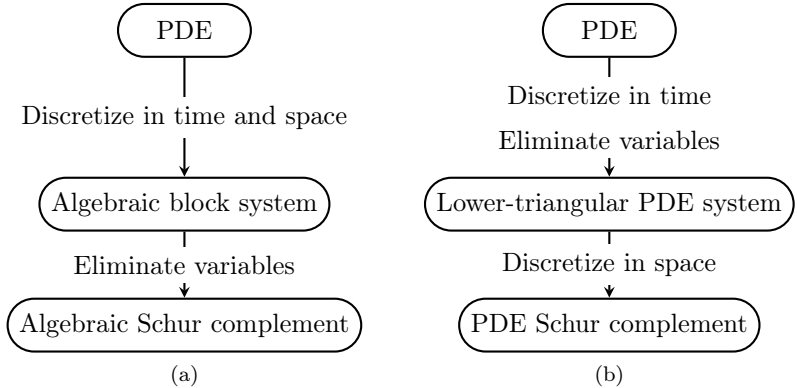
The algebraic structure of the scheme proposed in \eqref{UpVariEP} is a
consequence of the standard \emph{method of lines} discretization approach
(see Figure~\ref{fig:discretization_approaches}a) which transforms a PDE
into a coupled algebraic block system. Similarly, \emph{Rothe's method},
where the discretization in time is done first, leads to a similar
algebraic structure (Figure~\ref{fig:discretization_approaches}a) with the
same inherent difficulties as discussed in Section~\ref{sec:MOL}. In this
section we propose a different strategy in which we discretize in time
first and then eliminate variables on a semi-discrete level, see
Figure~\ref{fig:discretization_approaches}b.

We start by considering the following Crank-Nicolson semi-discretization of
the coupled system \eqref{ODEModel} augmented by an external force density
$\forcea(\xcoord, t)$ : given $\rho^{n}(\xcoord)$, $\mom^{n}(\xcoord)$ and
$\Epot^{n}(\xcoord)$ at time $t^n$, find $\mom^{n+1}(\xcoord)$ and
$\Epot^{n+1}(\xcoord)$ for time $t_{n+1}$ solving
\begin{align*}
  -\,\big(\Delta\Epot^{n+1} - \Delta\Epot^{n}\big) &\;=\; -\frac{\dtn\alpha}{2}
  \big(\diver{}\mom^{n+1} + \diver{}\mom^{n}\big),
  \\
  \mom^{n+1} - \mom^{n} &\;=\;
  -\frac{\dtn}{2} \rho^{n}\,\big(\nabla\Epot^{n+1} + \nabla\Epot^{n}\big)
  \;+\;\frac{\dtn}{2} \big(\forcea^{n+1}+\forcea^{n}\big).
\end{align*}
We now take the divergence of the second equation,
\begin{align*}
  \diver\mom^{n+1} \;=\; \diver\mom^{n}
  -\frac{\dtn}{2}\,\diver{}\Big(\rho^{n}\,\big(\nabla\Epot^{n+1} +
  \nabla\Epot^{n}\big)\Big)
  \;+\;\frac{\dtn}{2}\,\diver{}\big(\forcea^{n+1}+\forcea^{n}\big).
\end{align*}
and substitute into the first:
\begin{subequations}
  \label{ProtoSemi}
  \begin{multline}
    \label{ProtoSemiEpot}
    - \Delta\Epot^{n+1}
    - \frac{\dtn^2\alpha}{4} \diver{}\big(\rho^{n}\,\nabla\Epot^{n+1}\big)
    \;=\;
    -\dtn \alpha \diver \mom^n
    -\Delta\Epot^{n}
    \\
    +\frac{\dtn^2\alpha}{4} \diver{}\big(\rho^{n}\,\nabla\Epot^{n}\big)
    \;-\;\frac{\dtn^2\alpha}{4}
    \diver{}\big(\forcea^{n+1}+\forcea^{n}\big),
  \end{multline}
  \vspace{-1.5em}
  \begin{equation}
    \label{ProtoSemimom}
    \mom^{n+1} \;=\; \mom^{n} -\frac{\dtn}{2}
    \rho^{n}\,\big(\nabla\Epot^{n+1} + \nabla\Epot^{n}\big)
    + \frac{\dtn}{2} \big(\forcea^{n+1}+\forcea^{n}\big).
  \end{equation}
\end{subequations}
System~\eqref{ProtoSemi} is lower-triangular, in the sense that:
\eqref{ProtoSemiEpot} determines $\Epot^{n+1}$ and does not depend
on $\mom^{n+1}$. Once $\Epot^{n+1}$ is found \eqref{ProtoSemimom} determines
$\mom^{n+1}$.

Scheme \eqref{ProtoSemi} can be written in fully discrete, weak form as
follows: given $\vel_h^{n}\in\FESpaceNodalVec$ and
$\Epot_h^{n}\in\FESpacePot$ for time $t_{n}$ find
$\vel_h^{n+1}\in\FESpaceNodalVec$ and $\Epot_h^{n+1}\in\FESpacePot$ for
time $t_{n+1}$ solving
\begin{subequations}
  \label{ProtoSemiDiscrete}
  \begin{align}
    \label{ProtoSemiDiscreteEpot}
    a^+_{\dtn}(\Epot_h^{n+1}, \EpotTest_h)
    &\;=\;
    a^-_{\dtn}(\Epot_h^{n}, \EpotTest_h)
    \;+\;
    \dtn \alpha \langle\rho_h^n\vel_h^{n}, \nabla\EpotTest_h\rangle
    \;+\;\frac{\dtn^2\alpha}{4}
    \langle\forcea^{n+1}+\forcea^{n},\nabla\EpotTest_h\rangle,
    \\
    \label{ProtoSemiDiscretevel}
    \langle\rho_h^n\vel_h^{n+1},\momtest_h\rangle
    &\;=\;
    \langle\rho_h^n\vel_h^{n},\momtest_h\rangle
    -\frac{\dtn}{2}
    \langle\rho_h^n\{\nabla\Epot_h^{n+1} +
    \nabla\Epot_h^{n}\}-(\forcea^{n+1}+\forcea^{n}),\momtest_h\rangle.
  \end{align}
\end{subequations}
for all $\momtest_h\in\FESpaceNodalVec$ and $\EpotTest_h\in\FESpacePot$, where
the bilinear forms $a^+_{\dtn}(\Epot_h^{n+1}, \EpotTest_h)$ and
$a^-_{\dtn}(\Epot_h^{n}, \EpotTest_h)$ are defined by
\begin{align}
  \label{BilinearForm}
  a^\pm_{\dtn}(\Epot, \EpotTest) &\;:=\;
  ( \nabla\Epot,\nabla\EpotTest )
  \;\pm\; \frac{\dtn^2\alpha}{4}
  (\rho_h^n\nabla\Epot,\nabla\EpotTest) \, .
\end{align}
Note that in \eqref{ProtoSemiDiscrete} we have used a lumped inner product
product in \eqref{ProtoSemiDiscretevel} in order to recover a lumped
discrete kinetic energy (see Lemma~\ref{EnergyStabAffineLemma}).

This system is well posed provided that $\rho_h^n(\xcoord)>0$ and that the
inequality $\tfrac{\dtn^2\alpha}{4} \sup_{\xcoord\in\Omega}
\rho_h^n(\xcoord)>-1$ holds true. The latter imposes a mild restriction
on the step size for the self-gravitational case of
Example~\ref{ex:gravity} ($\alpha<0$), but none in the electrostatic case
of Example~\ref{ex:electron_fluid} ($\alpha>0$). Interestingly,
well-posedness holds true for \emph{any} choice of ansatz space
$\FESpaceNodalVec$ and $\FESpacePot$, thus, completely avoiding the
algebraic difficulties discussed in the previous Section~\ref{sec:MOL}.
Unfortunately, the discrete system \eqref{ProtoSemiDiscreteEpot} is in
general no longer energy preserving in the sense of \eqref{intStabODEEP}.
One way to remedy this shortcoming is by imposing very specific assumptions
on the finite-element ansatz spaces as detailed in the following lemma.

\begin{lemma}[Energy stability for affinely-mapped simplicial mesh]
  \label{EnergyStabAffineLemma}
  Consider the choice of finite element spaces described in
  \eqref{triangSpaces} with the additional restriction that the
  reference-to-physical map $\locglobmap_\element$ is affine for all
  elements $\element \in \triangulation$, then scheme
  \eqref{ProtoSemiDiscrete}  is energy-stable, more precisely, it satisfies
  the estimate:
  \begin{multline}
    \label{EnergyBalanceSourceUpdateStable}
    \frac{1}{2 \alpha} \big\|\nabla\Epot_h^{n+1}\big\|_{\bvLtwo}^2
    +\frac{1}{2} \sum_{i \in \hypvertices}m_i\varrho_i
    \big|\velVect_i^{n+1}\big|_{\ell^2}^2
    \;=\;
    \frac{1}{2 \alpha} \big\|\nabla\Epot_h^{n}\big\|_{\bvLtwo}^2
    +\frac{1}{2} \sum_{i \in \hypvertices}m_i\varrho_i
    \big|\velVect_i^{n}\big|_{\ell^2}^2
    \\
    \;+\;
    \frac{\dtn }{4}
    \sum_{i \in \hypvertices}m_i
    \big(\force_{\text{a},i}^{n+1}+\force_{\text{a},i}^{n},
    \velVect_i^{n+1}+\velVect_i^{n}\big)_{\ell^2}.
  \end{multline}
\end{lemma}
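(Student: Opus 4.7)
The plan is a midpoint-averaging energy argument. The subtlety, compared with Lemma~\ref{EnergyStabS1}, is that the bilinear form $a^+_{\dtn}$ in \eqref{BilinearForm} carries an extra density-weighted term that has no obvious counterpart in the kinetic-energy identity; to absorb it I would test the momentum equation \eqref{ProtoSemiDiscretevel} \emph{twice}, once with the midpoint velocity average and once with the gradient of the midpoint potential average. The latter test is precisely where the affine-simplicial hypothesis enters.

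First I would test \eqref{ProtoSemiDiscreteEpot} with $\EpotTest_h = \tfrac{1}{2\alpha}(\Epot_h^{n+1}+\Epot_h^n)$. The unweighted Dirichlet parts of $a^\pm_{\dtn}$ telescope via $(\nabla(\Epot_h^{n+1}-\Epot_h^n), \nabla(\Epot_h^{n+1}+\Epot_h^n)) = \|\nabla\Epot_h^{n+1}\|_{\bvLtwo}^2 - \|\nabla\Epot_h^n\|_{\bvLtwo}^2$ into $\tfrac{1}{2\alpha}(\|\nabla\Epot_h^{n+1}\|_{\bvLtwo}^2 - \|\nabla\Epot_h^n\|_{\bvLtwo}^2)$, while the $\pm\tfrac{\dtn^2\alpha}{4}\rho_h^n$-weighted parts combine into a leftover $\tfrac{\dtn^2}{8}(\rho_h^n\nabla(\Epot_h^{n+1}+\Epot_h^n), \nabla(\Epot_h^{n+1}+\Epot_h^n))$. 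On affinely-mapped simplices $\nabla\Epot_h^{n+1}+\nabla\Epot_h^n$ is elementwise constant and $\rho_h^n$ is elementwise $\mathbb{P}^1$; the integrand is therefore at most $\mathbb{P}^1$ per element, so Gauß--Lobatto nodal quadrature is exact and this $L^2$ inner product coincides with its lumped counterpart $\langle\cdot,\cdot\rangle$. Next I would test \eqref{ProtoSemiDiscretevel} with $\momtest_h = \tfrac{\dtn}{2}\nabla(\Epot_h^{n+1}+\Epot_h^n)$---an admissible test because, on an affine simplicial mesh, gradients of continuous $\mathbb{P}^1$ functions are elementwise constant and hence lie in $\FESpaceNodalVec=[\mathbb{P}^1_{\text{disc}}]^d$. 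The resulting identity expresses $\tfrac{\dtn^2}{8}\langle\rho_h^n|\nabla(\Epot_h^{n+1}+\Epot_h^n)|^2\rangle$ as the sum of a $\langle\rho_h^n(\vel_h^{n+1}-\vel_h^n),\cdot\rangle$ contribution and a force term with coefficient $\tfrac{\dtn^2}{8}$. Substituting into the first identity, the two $\tfrac{\dtn^2}{8}$-force terms cancel exactly and the source term $\tfrac{\dtn}{2}\langle\rho_h^n\vel_h^n,\cdot\rangle$ combines with the new $\tfrac{\dtn}{4}\langle\rho_h^n(\vel_h^{n+1}-\vel_h^n),\cdot\rangle$ term to produce
\begin{equation*}
  \tfrac{1}{2\alpha}\bigl(\|\nabla\Epot_h^{n+1}\|_{\bvLtwo}^2 -
  \|\nabla\Epot_h^n\|_{\bvLtwo}^2\bigr)
  \;=\;
  \tfrac{\dtn}{4}\bigl\langle\rho_h^n(\vel_h^{n+1}+\vel_h^n),
  \nabla(\Epot_h^{n+1}+\Epot_h^n)\bigr\rangle.
\end{equation*}

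Finally I would test \eqref{ProtoSemiDiscretevel} with $\momtest_h = \tfrac{1}{2}(\vel_h^{n+1}+\vel_h^n)$. Since $\langle\cdot,\cdot\rangle$ is nodally diagonal, the LHS telescopes to $\tfrac{1}{2}\sum_{i\in\hypvertices} m_i\varrho_i(|\velVect_i^{n+1}|_{\ell^2}^2 - |\velVect_i^n|_{\ell^2}^2)$. Adding this identity to the one above and exploiting the symmetry of the lumped inner product in its vector arguments, the two mixed terms $\pm\tfrac{\dtn}{4}\langle\rho_h^n(\vel_h^{n+1}+\vel_h^n),\nabla(\Epot_h^{n+1}+\Epot_h^n)\rangle$ cancel exactly, and only the nodal force contribution survives, which is the right-hand side of \eqref{EnergyBalanceSourceUpdateStable}.

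The main obstacle is the second testing in the argument above: one has to recognize that the extra density-weighted gradient term generated by $a^+_{\dtn}$ admits a natural canceller only when $\nabla\EpotTest_h$ is itself a legal test function for \eqref{ProtoSemiDiscretevel}. This fails on generic meshes---e.g.\ isoparametric $\mathbb{Q}^1$ on curved hexahedra---because then $\nabla\EpotTest_h$ is no longer piecewise polynomial in physical coordinates and generically lies outside $\FESpaceNodalVec$. The affine-simplicial hypothesis is thus used in a coupled way: it simultaneously makes $\nabla\EpotTest_h$ admissible as a momentum test function and guarantees exactness of the nodal quadrature on the relevant $\mathbb{P}^1$ integrands, so that the substitution is exact rather than merely approximate.
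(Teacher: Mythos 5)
Your proof is correct and follows essentially the same route as the paper's: both arguments rest on the two structural facts that $\nabla\FESpacePot\subset\FESpaceNodalVec$ on affinely-mapped simplicial meshes (making the gradient of a potential test function an admissible momentum test function) and that the lumped quadrature is exact on the resulting piecewise-$\mathbb{P}^1$ integrands (the paper's identity \eqref{MagicP1property}), followed by Crank--Nicolson midpoint testing. The only difference is bookkeeping: the paper substitutes the momentum test $\momtest_h=\tfrac{\dtn\alpha}{2}\nabla\EpotTest_h$ for an \emph{arbitrary} $\EpotTest_h$ to first rewrite the scheme as the symmetric system \eqref{rewrite} and then specializes, whereas you specialize to $\EpotTest_h=\tfrac{1}{2\alpha}(\Epot_h^{n+1}+\Epot_h^{n})$ immediately and carry out the same cancellations in place.
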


\begin{proof}
  Let $\Epot_h^{n+1}$ and $\vel^{n+1}_h$ be the solutions of the discrete
  system \eqref{ProtoSemiDiscrete}. From the assumptions of the lemma we
  have that the inclusion of the spaces $\nabla \FESpacePot \subseteq
  \FESpaceNodalVec$ holds true. Therefore we can set $\momtest_h =
  \frac{1}{2} \dtn \alpha \bv \nabla\EpotTest_h$ in
  \eqref{ProtoSemiDiscretevel} to get:
  \begin{multline}
    \label{clutter}
    \dtn\alpha \langle\rho_h^n\vel_h^{n},\nabla\EpotTest_h\rangle
    =
    \frac{\dtn\alpha}2
    \langle\rho_h^n\big\{\vel_h^{n+1}+\vel_h^{n}\big\},\nabla\EpotTest_h \rangle
    \\
    + \frac{\dtn^2\alpha}{4}
    \langle\rho_h^n\big\{\nabla\Epot_h^{n+1}+\nabla\Epot_h^{n}\big\}
    -(\forcea^{n+1}+\forcea^{n}),\nabla\EpotTest_h {\rangle}
  \end{multline}
  Substituting this identity into the right hand side of
  \eqref{ProtoSemiDiscreteEpot} and noting that%
  \footnote{%
    The identity follows from the exactness of the lumped quadrature for
    piecewice $\mathbb{P}^1(\element)$ functions, cf. \eqref{LumpingDef}.}
  \begin{align}
    \label{MagicP1property}
    \langle\rho_h^n\big\{\nabla\Epot_h^{n+1}+\nabla\Epot_h^{n}
    \big\},\nabla\EpotTest_h {\rangle}
    =
    (\rho_h^n\big\{\nabla\Epot_h^{n+1}+\nabla\Epot_h^{n}
    \big\},\nabla\EpotTest_h )
  \end{align}
  allows us to rewrite system \eqref{ProtoSemiDiscrete} as follows.
  \begin{subequations}
    \label{rewrite}
    \begin{align}
      \label{rewritePot}
      (\nabla\Epot_h^{n+1}, \nabla\EpotTest_h)
      & = (\nabla\Epot_h^{n}, \nabla\EpotTest_h)
      + \frac{\dtn\alpha}2
      \langle\rho_h^n\big\{\vel_h^{n+1}+\vel_h^{n}\big\},\nabla\EpotTest_h\rangle
      \\
      \label{rewriteMom}
      \langle\rho_h^n\vel_h^{n+1},\momtest_h\rangle
      &\;=\;
      \langle\rho_h^n\vel_h^{n},\momtest_h\rangle
      -\frac{\dtn}{2}
      \langle\rho_h\{\nabla\Epot_h^{n+1} + \nabla\Epot_h^{n}\}
      -(\forcea^{n+1}+\forcea^{n})
      ,\momtest_h \rangle.
    \end{align}
  \end{subequations}
  The statement now follows by testing \eqref{rewritePot} with
  $\EpotTest_h=\Epot_h^{n+1}+\Epot_h^{n}$, and \eqref{rewriteMom} with
  $\veltest=\alpha\,(\vel_h^{n+1}+\vel_h^n)$, adding both equations, and
  dividing both sides by $\alpha$.
\end{proof}

\subsection{Energy-stable source-update scheme: non-simplicial mesh}
\label{subse:non_simplicial}

When considering the use of a non-simplicial mesh (for example by using
quadrilaterals, or hexahedra) one is faced with two main difficulties when
trying to repeat the steps of the proof of Lemma
\ref{EnergyStabAffineLemma}:
\begin{itemize}
  \item
    For affine meshes, while the inclusion property $\nabla\FESpacePot \subset
    \FESpaceNodalVec$ still holds true, the identity \eqref{MagicP1property}
    is no longer valid.
  \item
    For non-affine meshes, the situation is slightly worse: neither
    inclusion property $\nabla\FESpacePot \subset \FESpaceNodalVec$, nor
    property \eqref{MagicP1property} hold true anymore.
\end{itemize}
In order to get around these two obstacles we consider scheme
\eqref{ProtoSemiDiscrete} supplemented with the following \emph{lumped
version} of the bilinear form $a^\pm_{\dtn}(\Epot_h^{n+1}, \EpotTest_h)$:

\begin{align}
  \label{BilinearFormQuad}
  a^\pm_{\dtn}(\Epot, \EpotTest)
  &\;:=\;
  (\nabla\Epot,\nabla\EpotTest) \;\pm\; \frac{\dtn^2\alpha}{4}
  \langle\rho_h\nabla\Epot,\nabla\EpotTest\rangle.
\end{align}
Let $\mathcal{I}_{\FESpaceNodalVec}:\mathcal{C}^0(\triangulation)
\rightarrow \FESpaceNodalVec$ denote the nodal interpolant for the
piece-continuous space $\FESpaceNodalVec$. For every $\EpotTest_h \in
\FESpacePot$ the function
$\mathcal{I}_{\FESpaceNodalVec}[\nabla\EpotTest_h]$ is a valid test
function for \eqref{ProtoSemiDiscretevel}. In addition, we have that
\begin{align}
  \label{SemiCommuteProp}
  \langle \mathcal{I}_{\FESpaceNodalVec}[\nabla\EpotTest_h], \bv{z} \rangle =
  \langle \nabla\EpotTest_h, \bv{z} \rangle
\end{align}
for every $\bv{z} \in [\mathcal{C}^0(\triangulation)]^d$. Equipped with
definition \eqref{BilinearFormQuad} and identity \eqref{SemiCommuteProp},
it is now possible to establish the following lemma.

\begin{lemma}
  \label{substitutability}
  Consider the choice of non-simplicial finite element spaces described in
  \eqref{QuadSpaces}. Then, the scheme described by
  \eqref{ProtoSemiDiscrete} with the modified bilinear form
  \eqref{BilinearFormQuad} satisfies the energy estimate
  \eqref{EnergyBalanceSourceUpdateStable} as well.
\end{lemma}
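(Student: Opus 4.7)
The plan is to mirror the proof of Lemma~\ref{EnergyStabAffineLemma} while replacing the two ingredients that fail on non-simplicial meshes (the inclusion $\nabla\FESpacePot\subset\FESpaceNodalVec$ and the simplicial identity \eqref{MagicP1property}) by the two new tools provided: the nodal interpolant $\mathcal{I}_{\FESpaceNodalVec}$ together with the commutation property \eqref{SemiCommuteProp}, and the lumped bilinear form \eqref{BilinearFormQuad}.

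First I would select the test function $\momtest_h := \tfrac12 \dtn\alpha\,\mathcal{I}_{\FESpaceNodalVec}[\nabla\EpotTest_h]$ in \eqref{ProtoSemiDiscretevel}. This is admissible by construction since $\mathcal{I}_{\FESpaceNodalVec}$ lands in $\FESpaceNodalVec$, and it is the natural surrogate for the choice $\tfrac12\dtn\alpha\nabla\EpotTest_h$ used in the simplicial case. Then, because $\rho_h^n\vel_h^{n}$, $\rho_h^n\vel_h^{n+1}$, $\rho_h^n\nabla\Epot_h^{n}$, $\rho_h^n\nabla\Epot_h^{n+1}$, and $\forcea^{n}+\forcea^{n+1}$ all belong to $[\mathcal{C}^0(\triangulation)]^d$, I would apply \eqref{SemiCommuteProp} to each occurrence of $\mathcal{I}_{\FESpaceNodalVec}[\nabla\EpotTest_h]$ to rewrite the resulting identity entirely in terms of lumped inner products against $\nabla\EpotTest_h$. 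This produces the analogue of \eqref{clutter}, namely
\begin{align*}
  \dtn\alpha\langle\rho_h^n\vel_h^{n},\nabla\EpotTest_h\rangle
  &= \tfrac{\dtn\alpha}{2}\langle\rho_h^n(\vel_h^{n+1}+\vel_h^{n}),\nabla\EpotTest_h\rangle \\
  &\quad + \tfrac{\dtn^2\alpha}{4}\langle\rho_h^n(\nabla\Epot_h^{n+1}+\nabla\Epot_h^{n})-(\forcea^{n+1}+\forcea^{n}),\nabla\EpotTest_h\rangle.
\end{align*}

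Next I would substitute this expression into the right-hand side of \eqref{ProtoSemiDiscreteEpot}, now using the lumped bilinear form \eqref{BilinearFormQuad}. This is exactly where the new design matches up: the lumped terms involving $\rho_h^n\nabla\Epot_h^{n}$ and $\rho_h^n\nabla\Epot_h^{n+1}$ appearing from $a^\pm_{\dtn}$ telescope with the corresponding lumped terms coming from the substitution, the force contributions cancel, and what remains is the reduced identity
\begin{align*}
  (\nabla\Epot_h^{n+1},\nabla\EpotTest_h)
  = (\nabla\Epot_h^{n},\nabla\EpotTest_h)
  + \tfrac{\dtn\alpha}{2}\langle\rho_h^n(\vel_h^{n+1}+\vel_h^{n}),\nabla\EpotTest_h\rangle,
\end{align*}
which is precisely the non-simplicial analogue of \eqref{rewritePot}. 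Crucially, the cancellation of the $\langle\rho_h^n\nabla\Epot_h,\nabla\EpotTest_h\rangle$ terms is the reason the identity \eqref{MagicP1property} (which is no longer available) is \emph{not} required here.

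From this point, the argument closes exactly as in the simplicial case: I would test the reduced potential identity with $\EpotTest_h = \Epot_h^{n+1}+\Epot_h^{n}$, test the (unmodified) momentum equation \eqref{ProtoSemiDiscretevel} with $\veltest_h = \alpha(\vel_h^{n+1}+\vel_h^{n})$, and add the two equations. The lumped inner products on the momentum side telescope into $\tfrac12\alpha\sum_i m_i\varrho_i(|\velVect_i^{n+1}|_{\ell^2}^2-|\velVect_i^{n}|_{\ell^2}^2)$ by the standard $a^2-b^2$ identity at each node, the $(\nabla\Epot,\nabla\EpotTest)$ contributions collapse in the same way into the increment of $\tfrac12\|\nabla\Epot_h\|^2$, the mixed cross terms cancel, and division by $\alpha$ yields \eqref{EnergyBalanceSourceUpdateStable}. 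The main conceptual obstacle is the one already identified at the start of Section~\ref{subse:non_simplicial}: both simplicial ingredients fail simultaneously; the proof works only because the interpolant-based test function and the lumped bilinear form have been chosen as a \emph{matched pair} so that every lumped inner product that appears on one side has an exact counterpart on the other.
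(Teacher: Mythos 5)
Your proof is correct and takes essentially the same route as the paper's: the same interpolated test function $\momtest_h=\tfrac12\dtn\alpha\,\mathcal{I}_{\FESpaceNodalVec}[\nabla\EpotTest_h]$ together with \eqref{SemiCommuteProp} recovers \eqref{clutter}, the lumped bilinear form \eqref{BilinearFormQuad} then lets the lumped $\langle\rho_h^n\nabla\Epot_h,\nabla\EpotTest_h\rangle$ terms cancel so that \eqref{rewrite} is obtained without invoking \eqref{MagicP1property}, and the conclusion proceeds exactly as in Lemma~\ref{EnergyStabAffineLemma}. You merely spell out the cancellation mechanism in more detail than the paper, which states the same argument in three sentences.
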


\begin{proof}
  By setting $\momtest_h=\frac{1}{2} \dtn \alpha
  \mathcal{I}_{\FESpaceNodalVec}[\bv  \nabla\EpotTest_h]$ in
  \eqref{ProtoSemiDiscretevel} and using \eqref{SemiCommuteProp} we recover
  again identity \eqref{clutter}. Substituting this identity into
  \eqref{ProtoSemiDiscreteEpot} and using definition
  \eqref{BilinearFormQuad} allows us to rewrite \eqref{ProtoSemiDiscrete}
  again into \eqref{rewrite}. The proof now concludes in the same way as
  the proof of Lemma~\ref{EnergyStabAffineLemma}.
\end{proof}

\begin{remark}[Consistency error]
  Rigorously establishing that \eqref{BilinearFormQuad} possesses full
  second-order consistency on arbitrary meshes is a considerable challenge;
  see \cite[Ch. 13]{GuermondErnVolI}. In Section
  \ref{subse:num:convergence} we provide some numerical evidence that we
  indeed recover second order consistency for the class of an
  asymptotically affine mesh sequence.
\end{remark}

\subsection{An operator-splitting scheme}
\label{subse:procedure}

We are now in a position to integrate the source update scheme developed in
the previous sections into a complete update scheme for the Euler-Poisson
equations by using either a first-order \emph{Yanenko} or a second-order
\emph{Strang} operator-splitting approach (see
\cite[Ch.~5]{Quarteroni1994}). We split the Euler-Poisson
equations into three operators: a hyperbolic update described by
\eqref{EulerWithSources} (without external forces),
\begin{align*}
  \partial_t\state + \diver{} \flux(\state) = 0,
\end{align*}
an undamped source update, \ie, $\chartime=\infty$, possibly including a
prescribed external force $\force_a$ and governed by \eqref{ODEModel},
viz.,
\begin{align}
  \begin{split}
    \label{sourceUp}
    \partial_t \den &= 0,
    \\
    \partial_t \mom &= - \den \nabla\Epot + \force_a,
    \\
    \partial_t \totme &= - \nabla\Epot \cdot \mom -
    \den^{-1}\force_{\text{a}} \cdot \mom ,
    \\
    - \Delta \partial_t\Epot &= - \alpha\,\diver{}\mom.
  \end{split}
\end{align}
and a pure damping operator,
\begin{align}
  \label{Damping}
  \partial_t \den = 0,
  \quad
  \partial_t \mom = - \frac{1}{\chartime} \mom,
  \quad
  \partial_t \totme = - \frac{1}{\den\chartime} |\mom|_{\ell^2}^{2},
  \quad
  - \Delta \partial_t\Epot = 0.
\end{align}
The source update \eqref{sourceUp} is now discretized using scheme
\eqref{ProtoSemiDiscrete}, which has to be combined with either definition
\eqref{BilinearForm} or \eqref{BilinearFormQuad} depending on the chosen finite
element space. A slight modification for the velocity update is necessary to
accommodate the additional external force $\force_{\text{a}}$;
see Algorithm~\ref{alg:source_update}. Damping effects in case of
$1\ll\chartime<\infty$ are incorporated using an additional layer of
operator splitting, where we have to solve system \eqref{Damping}; see
Algorithm~\ref{alg:damping}.

\begin{algorithm}
  \caption{\texttt{hyperbolic\_update}($\{\rho_h^n, \mom_h^n, \totme_h^n\}$)}
  \label{alg:hyperbolic_update}
  \begin{itemize}
    \item[1.]
      Compute largest feasible time-step size $\dt_n$ subject to CFL
      condition.
    \vspace{0.5em}
    \item[2.]
      Use initial data $\{\rho_h^n, \mom_h^n, \totme_h^n\}$ to compute
      update $\{\rho_h^{n+1}, \mom_h^{n+1}, \totme_h^{n+1}\}$.
  \end{itemize}
  \vspace{0.5em}
  Return $(\{\rho_h^{n+1}, \mom_h^{n+1}, \totme_h^{n+1}\}, \dt_n)$.
\end{algorithm}

\begin{remark}[Minimal assumptions for the hyperbolic solver]
  \label{MinimalAssumpt}
  For the remainder of the manuscript it is tacitly assumed that the
  hyperbolic solver invoked in the call to $\texttt{hyperbolic\_update}$,
  see Algorithm \ref{alg:hyperbolic_update}, is such that the inequalities
  \begin{align}
    \label{PositivityProp}
    \rho_i^{n+1} > 0,\quad
    \totme_i^{n+1} - \tfrac{\big|\momVect_i^{n+1}\big|_{\ell^2}^2}{2
    \varrho_i^{n+1}} > 0\quad \text{for all } i \in \hypvertices,
  \end{align}
  are guaranteed provided that \eqref{PositivityProp} already holds true
  for index $n$. Here, $\rho_i^n$ and $\momVect_i^{n}$ represent the
  density and momentum at the nodes, see also \eqref{FeFunctions}, while
  $\totme_i^n$ represents the total mechanical energy of Euler's system. We
  also assume that the total mechanical energy is preserved on discrete level,
  \begin{align}\label{ConsProp}
    \sum_{i \in \hypvertices} m_i \totme_i^{n+1} =
    \sum_{i \in \hypvertices} m_i \totme_i^{n},
  \end{align}
  whenever periodic or reflecting boundary conditions are used. Here, $m_i
  = \int_{\Omega} \phi_i(\xcoord) \dx$ is the lumped mass matrix, see
  Section \ref{subse:notation} for more details.
  We note in passing that the above (minimal) assumptions are primarily
  introduced to prove the following technical lemmas. In practice, it is
  often necessary to enforce significantly stricter pointwise stability
  properties to achieve a desired numerical fidelity; see \cite{Guer2018,
  GuePo2019}. For the convenience of the reader, we have succinctly
  summarized the hyperbolic solver used in all the computations of this
  paper in Appendix~\ref{app:hyperbolic} for which both properties
  \eqref{PositivityProp} and \eqref{ConsProp} are guaranteed.
\end{remark}

\begin{algorithm}
  
  \caption{\texttt{source\_update}($\{\rho_h^n, \mom_h^n, \totme_h^n,
  \Epot_h^n\}, \dt_n$)}
  \label{alg:source_update}
  \begin{itemize}
    \item[1.]
      Set $\velVect_i^n=\frac{1}{\rhoVect_i^n}{\momVect_i^{n}}$ and solve
      for the unknown $\Epot_h^{n+1}$, solution of the linear problem
      \begin{multline*}
        a^+_{\dt_n}(\Epot_h^{n+1}, \EpotBasis_i) =
        a^-_{\dt_n}(\Epot_h^{n}, \EpotBasis_i)
        \;+\; \dt_n \alpha \langle\rho_h^n\vel_h^{n},
        \nabla\EpotBasis_i\rangle
        \\
        \;+\;\frac{\dtn^2\alpha}{4}
        \langle\forcea^{n+1}+\forcea^{n},\nabla\EpotBasis_i\rangle
        \quad \text{for all } i \in \potvertices.
      \end{multline*}
    \item[2.] Update the velocity according to
      \begin{align*}
        \rhoVect_i^{n}\velVect_i^{n+1} =\;
        \rhoVect_i^{n}\velVect_i^{n} \;-\;
        \frac{\dt_n}{2}\rhoVect_i^{n}
        \,\big\{\nabla\Epot_h^{n+1}+\nabla\Epot_h^{n}\big\}(\xcoord_i)
        \;+\;
        \frac{\dt_n}{2}\big(\force^{n+1}_{\text{a},i} +
        \force^n_{\text{a},i}\big).
      \end{align*}
      Here, $\xcoord_i$ denotes the coordinate of the $i$-th interpolation
      point; see Section~\ref{subse:notation}.
    \item[3.] Set
      \begin{align*}
        \rhoVect_i^{n+1} = \rhoVect_i^n, \quad
        \momVect_i^{n+1}=\rhoVect_i^n\velVect_i^{n+1}\quad
        \text{for all }i\in\hypvertices
      \end{align*}
    \item[4.] Update of total mechanical energy.
      \begin{align}
        \label{eq:energy_update_proof_2_first}
        \totme_i^{n+1} \;&=\;
        \totme_i^{n} \;+\;
        \frac{1}{2\,\varrho^{n+1}_i}
        \Big( |\momVect_i^{n+1}|^2 - |\momVect_i^{n}|^2 \Big)
        \quad\text{for all }i\in\hypvertices.
      \end{align}
  \end{itemize}
  Return $(\{\rho_h^{n+1}, \mom_h^{n+1}, \totme_h^{n+1}, \Epot_h^{n+1}\},
  \dt_n)$.

\end{algorithm}

\begin{lemma}[Properties preserved by Algorithm \ref{alg:source_update}]
  \label{lem:balance_internal_energy}
  The source update (Algorithm \ref{alg:source_update}) does not modify the
  density and internal energy, i.\,e.,
  \begin{align}
    \label{InternalEnergyBalance}
    \rho_i^{n+1} = \rho_i^n \ , \ \
    \totme_i^{n+1} - \frac{\big|\momVect_i^{n+1}\big|_{\ell^2}^2}{2
    \varrho_i^{n+1}}
    =
    \totme_i^{n} - \frac{\big|\momVect_i^{n}\big|_{\ell^2}^2}{2
    \varrho_i^{n}} \ \ \text{for all } i \in \hypvertices.
  \end{align}
  Furthermore, it maintains a global energy-balance
  \begin{multline}
    \label{TotalEnergyBalanceSource}
    \sum_{i \in \hypvertices} m_i\,\totme_{i}^{n+1} \;+\;
    \frac{1}{2\,\alpha} \big\|\nabla\Epot_h^{n+1}\big\|_{\bvLtwo}^2
    =
    \sum_{i \in \hypvertices} m_i\,\totme_{i}^n \;+\; \frac{1}{2\,\alpha}
    \big\|\nabla\Epot_h^{n}\big\|_{\bvLtwo}^2
    \\
    \;+\;\sum_{i \in \hypvertices} m_i\,\frac{\dt_n}{4}\,
    \big(\force_{\text{a},i}^{n+1}+\force_{\text{a},i}^{n}\,,
    \velVect_i^{n}+\velVect_i^{n+1}\big)_{\ell^2}.
  \end{multline}
\end{lemma}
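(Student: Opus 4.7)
The plan is to dispatch the three claims in turn, and the third (the global energy balance) is essentially a bookkeeping exercise that combines the nodal energy update of Step~4 of Algorithm~\ref{alg:source_update} with Lemma~\ref{EnergyStabAffineLemma} (or Lemma~\ref{substitutability} in the non-simplicial case).

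First, the density preservation $\rho_i^{n+1}=\rho_i^n$ is immediate from Step~3 of Algorithm~\ref{alg:source_update}. Second, the pointwise internal-energy preservation in \eqref{InternalEnergyBalance} is obtained by rearranging the definition \eqref{eq:energy_update_proof_2_first} of $\totme_i^{n+1}$: I would simply move the kinetic terms to the left-hand side to get
\begin{equation*}
  \totme_i^{n+1} - \frac{|\momVect_i^{n+1}|_{\ell^2}^2}{2\varrho_i^{n+1}}
  \;=\;
  \totme_i^{n} - \frac{|\momVect_i^{n}|_{\ell^2}^2}{2\varrho_i^{n+1}},
\end{equation*}
and then use $\varrho_i^{n+1}=\varrho_i^{n}$ on the right-hand side to match the claim. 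This is literally how Step~4 is designed.

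For the global balance \eqref{TotalEnergyBalanceSource}, I would multiply \eqref{eq:energy_update_proof_2_first} by $m_i$ and sum over $i\in\hypvertices$, giving
\begin{equation*}
  \sum_{i\in\hypvertices} m_i\,\totme_i^{n+1}
  \;=\;
  \sum_{i\in\hypvertices} m_i\,\totme_i^{n}
  \;+\;
  \sum_{i\in\hypvertices} \frac{m_i}{2\,\varrho_i^{n+1}}
  \bigl(|\momVect_i^{n+1}|^2 - |\momVect_i^{n}|^2\bigr).
\end{equation*}
The remaining task is to identify the nodal kinetic-energy increment on the right with the one appearing in Lemma~\ref{EnergyStabAffineLemma}. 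Using $\momVect_i = \varrho_i\velVect_i$ and $\varrho_i^{n+1}=\varrho_i$ we have the identity $\frac{|\momVect_i|^2}{\varrho_i} = \varrho_i|\velVect_i|^2$, so the kinetic sum in the lemma coincides (up to factor $1/2$) with the increment above. Applying Lemma~\ref{EnergyStabAffineLemma} (or Lemma~\ref{substitutability}) then yields
\begin{equation*}
  \sum_{i\in\hypvertices} \frac{m_i}{2\,\varrho_i^{n+1}}
  \bigl(|\momVect_i^{n+1}|^2 - |\momVect_i^{n}|^2\bigr)
  \;=\;
  \frac{1}{2\alpha}\bigl(\|\nabla\Epot_h^{n}\|_{\bvLtwo}^2
  - \|\nabla\Epot_h^{n+1}\|_{\bvLtwo}^2\bigr)
  + \frac{\dt_n}{4}\sum_{i\in\hypvertices} m_i\bigl(\force_{\text{a},i}^{n+1}+\force_{\text{a},i}^{n},\velVect_i^{n+1}+\velVect_i^{n}\bigr)_{\ell^2},
\end{equation*}
and rearranging the discrete Dirichlet terms to the left-hand side delivers exactly \eqref{TotalEnergyBalanceSource}.

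There is no deep obstacle here: the entire content of the result has already been packaged in \eqref{eq:energy_update_proof_2_first} (which enforces preservation of internal energy by construction) and in Lemma~\ref{EnergyStabAffineLemma}/Lemma~\ref{substitutability} (which provides energy stability for the momentum-potential source subsystem). The only subtlety to watch is the compatibility of ansatz spaces: one must ensure we are in a regime (affinely mapped simplices with \eqref{BilinearForm}, or non-simplicial elements with the lumped variant \eqref{BilinearFormQuad}) under which the source-scheme lemma is actually applicable; once this is granted, the proof reduces to clean algebra.
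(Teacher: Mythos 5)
Your proposal is correct and follows essentially the same route as the paper's proof: density invariance from Step~3, internal-energy invariance by rearranging \eqref{eq:energy_update_proof_2_first}, and the global balance by combining the mass-weighted sum of the kinetic increments (using $\momVect_i=\varrho_i\velVect_i$) with the energy identity \eqref{EnergyBalanceSourceUpdateStable} of Lemma~\ref{EnergyStabAffineLemma} (or Lemma~\ref{substitutability}). The paper phrases this last step as adding the summed internal-energy identity \eqref{GlobInte} to \eqref{EnergyBalanceSourceUpdateStable}, which is algebraically identical to your substitution of the kinetic-energy increment.
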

\begin{proof}
  Invariance of the density follows directly from the fact that Algorithm
  \ref{alg:source_update} does not modify the density, see Step 3.
  Invariance of the internal energy follows from a reorganization of
  \eqref{eq:energy_update_proof_2_first}. By adding for all nodes in the
  mesh, from \eqref{InternalEnergyBalance} it follows directly that
  \begin{align}\label{GlobInte}
    \sum_{i \in \hypvertices} m_i \Big( \totme_i^{n+1} -
    \frac{\big|\momVect_i^{n+1}\big|_{\ell^2}^2}{2 \varrho_i^{n+1}}\Big)
    =
    \sum_{i \in \hypvertices} m_i \Big( \totme_i^{n} -
    \frac{\big|\momVect_i^{n}\big|_{\ell^2}^2}{2 \varrho_i^{n}}\Big)
  \end{align}
  On the other hand, Steps 1 and 2 from Algorithm \ref{alg:source_update}
  implement formula \eqref{ProtoSemiDiscrete}, which was designed to comply with
  Lemma \ref{EnergyStabAffineLemma}, therefore they satisfy identity
  \eqref{EnergyBalanceSourceUpdateStable} by construction.
  Finally, \eqref{TotalEnergyBalanceSource} follows by adding
  \eqref{EnergyBalanceSourceUpdateStable} to \eqref{GlobInte}.
\end{proof}

We describe the relaxation update in Algorithm \ref{alg:damping}, which is
used to implement the effects of damping as described by
\eqref{Damping}. Finally, the entire update procedure using Yanenko
splitting is summarized in Algorithm~\ref{alg:yanenko}.

\begin{algorithm}

  \caption{\texttt{relaxation\_update}($\{\rho_h^n, \mom_h^n, \totme_h^n,
  \Epot_h^n \}, \dtn , \chartime $) }
  \label{alg:damping}
  \vspace{0.5em}
  \begin{enumerate}
    \item[1.]
      \emph{Relaxation update.} Operator \eqref{Damping} is discretized as
      follows.
      \begin{align*}
        \momVect_i^{n+1} \;&=\;
        e^{-\dtn/\chartime}\momVect_i^{n}
        \quad\text{for }i\in\hypvertices.
      \end{align*}
    \item[2.]
      \emph{Update of total mechanical energy.}
      \begin{align}
        \label{eq:energy_update_proof_2_second}
        \totme_i^{n+1} &= \totme_i^{n}  +
        \frac{1}{2\,\varrho^{n}_i}\Big(
        |\momVect_i^{n+1}|^2 - |\momVect_i^{n}|^2 \Big)
        \quad\text{for }i\in\hypvertices.
      \end{align}
    \item[3.]
      Set $\Epot_h^{n+1} = \Epot_h^{n}$, and $\rho_i^{n+1} = \rho_i^{n}$
      for all $i \in \hypvertices$.
  \end{enumerate}
  Return $[\rho_h^{n+1},\mom_h^{n+1},\totme_h^{n+1},
  \Epot_h^{n+1}]^\transp$.
\end{algorithm}

\begin{algorithm}

  \caption{First-order Yanenko splitting.}
  \label{alg:yanenko}
  Input discrete state
  $\bv{U}_h^{n} = [\rho_h^{n},\mom_h^{n},\totme_h^{n}, \Epot_h^n]^\transp$
  and initial time $t^n$, then:
  \vspace{0.5em}
  \begin{enumerate}
    \item[1.] \emph{Hyperbolic update.}
      Perform an update using Algorithm \ref{alg:hyperbolic_update}:
      \begin{align*}
        \{\rho_h^{n+1,1},\mom_h^{n+1,1},\totme_h^{n+1,1},\dt_n\} \leftarrow
        \texttt{hyperbolic\_update}(\{\rho_h^{n},\mom_h^{n},\totme_h^{n}\}).
      \end{align*}
    We set $\Epot_h^{n+1,1} = \Epot_h^{n}$.
    \item[2.]
      \emph{Source update.} Then perform a source update using
      Algorithm~\ref{alg:source_update}:
      \begin{align*}
      \{\rho_h^{n+1,2}, \mom_h^{n+1,2},\totme_i^{n+1,2},\Epot_h^{n+1,2}\}
      \leftarrow
      \texttt{source\_update}
      (\{\rho_h^{n+1,1},\mom_h^{n+1,1},
      \totme_i^{n+1,1},\Epot_h^{n+1,1}\}, \dt_n)
      \end{align*}
  \item[3.]
    \emph{Relaxation update.} Perform a source update using Algorithm
    \ref{alg:damping}
    \begin{align*}
      \{\rho_h^{n+1}, \mom_h^{n+1},\totme_i^{n+1},\Epot_h^{n+1}\}
      \leftarrow
      \texttt{relaxation\_update}
      (\{\rho_h^{n+1,2}, \mom_h^{n+1,2},
      \totme_h^{n+1,2},\Epot_h^{n+1,2}\}, \dt_n, \chartime)
    \end{align*}
  \end{enumerate}
  Return $[\rho_h^{n+1},\mom_h^{n+1},\totme_h^{n+1},
  \Epot_h^{n+1}]^\transp$ for final time $t_{n+1}:=t+\dt_n$.

\end{algorithm}
\begin{lemma}[Properties preserved by Algorithm \ref{alg:yanenko}]
  \label{lem:balance_total_energy} For the sake of simplicity we neglect
  damping mechanisms (i.e. $\chartime = + \infty$), we use the assumptions
  outlined in Remark \ref{MinimalAssumpt} for the hyperbolic solver, and
  assume periodic boundary conditions. Then the full update procedure described
  by Algorithms \ref{alg:yanenko} preserves positivity of density and internal
  energy, as well as the following energy balance:
  \begin{multline}
    \label{TotalEnergyBalance}
    \sum_{i \in \hypvertices} m_i\,\totme_{i}^{n+1}
    \;+\; \frac{1}{2\,\alpha} \big\|\nabla\Epot_h^{n+1}\big\|_{\bvLtwo}^2
    \;\le\;
    \sum_{i \in \hypvertices} m_i\,\totme_{i}^n
    \;+\; \frac{1}{2\,\alpha} \big\|\nabla\Epot_h^{n}\big\|_{\bvLtwo}^2
    \\
    \;+\;\sum_{i \in \hypvertices} m_i\,\frac{\dt_n}{4}\,
    \big(\force_{\text{a},i}^{n+1}+\force_{\text{a},i}^{n}\,,
    \velVect_i^{n+1,1}+\velVect_i^{n+1,2}\big)_{\ell^2},
  \end{multline}
  which is a discrete counterpart of the continuous energy balance
  \eqref{intStabEP} (neglecting boundary terms).
\end{lemma}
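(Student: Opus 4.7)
The plan is to decompose the combined update of Algorithm \ref{alg:yanenko} into its three constituent sub-steps (hyperbolic, source, relaxation) and chain the invariance/balance identities that have already been proven for each one. With $\chartime = +\infty$, the relaxation sub-step collapses to the identity, so the work reduces to combining Remark \ref{MinimalAssumpt} with Lemma \ref{lem:balance_internal_energy}.

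First I would establish positivity. From the standing hypothesis \eqref{PositivityProp} on the hyperbolic solver, the intermediate state $\{\rho_h^{n+1,1}, \mom_h^{n+1,1}, \totme_h^{n+1,1}\}$ has strictly positive nodal density and nodal internal energy. Step 3 of Algorithm \ref{alg:source_update} copies the density, and identity \eqref{InternalEnergyBalance} in Lemma \ref{lem:balance_internal_energy} shows the source update leaves the nodal internal energy untouched, so positivity is inherited by the state at index $(n+1,2)$. Finally, with $\chartime = +\infty$ the factor $e^{-\dt_n/\chartime}$ in Algorithm \ref{alg:damping} equals $1$, hence $\momVect_i^{n+1} = \momVect_i^{n+1,2}$, and the update \eqref{eq:energy_update_proof_2_second} reduces to $\totme_i^{n+1} = \totme_i^{n+1,2}$. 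Consequently the final state retains both $\rho_i^{n+1}>0$ and $\totme_i^{n+1} - |\momVect_i^{n+1}|^2/(2\varrho_i^{n+1}) > 0$ for all $i \in \hypvertices$.

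Next I would telescope the energy through the three sub-steps. Step 1 of Algorithm \ref{alg:yanenko} sets $\Epot_h^{n+1,1} = \Epot_h^{n}$, and the conservation hypothesis \eqref{ConsProp} yields $\sum_{i} m_i\totme_i^{n+1,1} = \sum_{i} m_i\totme_i^n$; hence across the hyperbolic sub-step the full discrete energy $\sum_i m_i\totme_i + \tfrac{1}{2\alpha}\|\nabla\Epot_h\|_{\bvLtwo}^2$ is unchanged. Applying the source-update balance \eqref{TotalEnergyBalanceSource} of Lemma \ref{lem:balance_internal_energy} between the intermediate indices $(n+1,1)$ and $(n+1,2)$ gives
\begin{align*}
\sum_{i \in \hypvertices} m_i\,\totme_{i}^{n+1,2}
+ \frac{1}{2\,\alpha} \big\|\nabla\Epot_h^{n+1,2}\big\|_{\bvLtwo}^2
&= \sum_{i \in \hypvertices} m_i\,\totme_{i}^{n+1,1}
+ \frac{1}{2\,\alpha} \big\|\nabla\Epot_h^{n+1,1}\big\|_{\bvLtwo}^2 \\
&\quad + \sum_{i \in \hypvertices} m_i\,\frac{\dt_n}{4}\,
\big(\force_{\text{a},i}^{n+1}+\force_{\text{a},i}^{n},\,
\velVect_i^{n+1,1}+\velVect_i^{n+1,2}\big)_{\ell^2}.
\end{align*}
The relaxation sub-step, as noted above, acts as the identity on both $\totme_i$ and $\Epot_h$ when $\chartime=+\infty$, so all quantities at index $(n+1,2)$ coincide with those at index $n+1$. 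Chaining the three identities yields \eqref{TotalEnergyBalance} with equality; the stated $\le$ follows a fortiori (and would also accommodate any hyperbolic solver satisfying the weaker inequality version of \eqref{ConsProp}).

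There is essentially no single hard step: the heavy lifting has been absorbed into Lemmas \ref{EnergyStabAffineLemma}/\ref{substitutability} (used inside Lemma \ref{lem:balance_internal_energy}) and into the minimal hyperbolic-solver assumptions of Remark \ref{MinimalAssumpt}. The only point requiring care is the bookkeeping of the three intermediate states, and in particular verifying that the velocity combination $\velVect_i^{n+1,1}+\velVect_i^{n+1,2}$ produced by the source sub-step—where by construction $\vel_h^n$ in \eqref{TotalEnergyBalanceSource} plays the role of $\vel_h^{n+1,1}$ and $\vel_h^{n+1}$ plays the role of $\vel_h^{n+1,2}$—matches exactly the right-hand side of \eqref{TotalEnergyBalance}. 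This is immediate once the velocity of the post-hyperbolic state is labeled $\velVect_i^{n+1,1}$ and the velocity of the post-source state $\velVect_i^{n+1,2}$, which is the convention already used in Algorithm \ref{alg:yanenko}.
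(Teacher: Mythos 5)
Your proposal is correct and follows essentially the same route as the paper's proof: decompose Algorithm~\ref{alg:yanenko} into its sub-steps, use \eqref{PositivityProp}, \eqref{ConsProp}, and the invariance of $\Epot_h$ for the hyperbolic step, apply Lemma~\ref{lem:balance_internal_energy} (identities \eqref{InternalEnergyBalance} and \eqref{TotalEnergyBalanceSource}) for the source step, and observe that the relaxation step is the identity when $\chartime=+\infty$. Your added remark that the balance actually holds with equality under these assumptions (so the stated $\le$ is a fortiori) matches the paper, which likewise calls \eqref{TotalEnergyBalance} an identity in its proof.
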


\begin{proof}
  
  The proof follows by analyzing the properties of the solution at the end
  of each step of Algorithms \ref{alg:yanenko}. We start with Step 1. Regarding
  positivity properties, we  use the assumption  \eqref{PositivityProp},
  described in Remark \ref{MinimalAssumpt}, in order to get that
  \begin{align}
    \label{Step1loc}
    \rho_i^{n+1,1} > 0,\quad
    \totme_i^{n+1,1} - \frac{\big|\momVect_i^{n+1,1}\big|_{\ell^2}^2}{2
    \varrho_i^{n+1,1}} > 0
    \quad \text{for all } i \in \hypvertices.
  \end{align}
  Using the assumptions on the hyperbolic solver \eqref{ConsProp}, and the
  invariance of the potential during Step 1, we have that
  \begin{align}
    \label{Step1glob}
    \sum_{i \in \hypvertices} m_i\,\totme_{i}^{n+1,1} + \frac{1}{2\,\alpha}
    \big\|\nabla\Epot_h^{n+1,1}\big\|_{\bvLtwo}^2
    =
    \sum_{i \in \hypvertices} m_i\,\totme_{i}^{n} + \frac{1}{2\,\alpha}
    \big\|\nabla\Epot_h^{n}\big\|_{\bvLtwo}^2 \, .
  \end{align}
  Expressions \eqref{Step1loc} and \eqref{Step1glob} conclude our analysis
  for the properties preserved at the end of Step 1.

  We proceed with the analysis of Step 2. From Lemma
  \eqref{lem:balance_internal_energy} we know that
  \begin{align}\label{PositFinal}
    \rho_i^{n+1,2} = \rho_i^{n+1,1}
    \quad \text{and}\quad
    \totme_i^{n+1,2} - \frac{\big|\momVect_i^{n+1,2}\big|_{\ell^2}^2}{2
    \varrho_i^{n+1,2}} = \totme_i^{n+1,1} -
    \frac{\big|\momVect_i^{n+1,1}\big|_{\ell^2}^2}{2
    \varrho_i^{n+1,1}}
  \end{align}
  for all $i \in \hypvertices$ at the end of Step 2. Regarding
  energy-balance properties, using property
  \eqref{TotalEnergyBalanceSource} of the source-update scheme we get
  \begin{multline}
    \label{Step2glob}
    \sum_{i \in \hypvertices} m_i\,\totme_{i}^{n+1,2} +
    \frac{1}{2\,\alpha} \big\|\nabla\Epot_h^{n+1,2}\big\|_{\bvLtwo}^2
    =
    \sum_{i \in \hypvertices} m_i\,\totme_{i}^{n+1,1} +
    \frac{1}{2\,\alpha} \big\|\nabla\Epot_h^{n + 1,1}\big\|_{\bvLtwo}^2
    \\ + \sum_{i \in \hypvertices} m_i\,\frac{\dt_n}{4}\,
    \big(\force_{\text{a},i}^{n+1}+\force_{\text{a},i}^{n}\,,
    \velVect_i^{n+1,1}+\velVect_i^{n+1,2}\big)_{\ell^2}.
  \end{multline}
  Finally under the assumption of $\chartime = +\infty$ we have that
  \begin{align*}
    [\rho_h^{n+1},\mom_h^{n+1},\totme_h^{n+1}, \Epot_h^{n+1}]^\transp =
    [\rho_h^{n+1,2},\mom_h^{n+1,2},\totme_h^{n+1,2},
    \Epot_h^{n+1,2}]^\transp.
  \end{align*}
  Adding \eqref{Step1glob} and \eqref{Step2glob} we recover identity
  \eqref{TotalEnergyBalance}. Finally, \eqref{PositFinal} implies
  positivity properties of the final solution. 
\end{proof}

Analogously, we can derive a second-order operator splitting. For the sake of
simplicity, we neglect damping effects and summarize a second-order
scheme in Algorithm~\ref{alg:strang}. Lemma \ref{lem:balance_total_energy}
also holds true for Algorithm~\ref{alg:strang} (combined with the relaxation
update Algorithm~\ref{alg:damping}). Only minimal adjustments of the
proofs are necessary to accommodate the second-order operator splitting. For
the sake of brevity we omit the details.
\begin{remark}[Total mechanical energy update]
  \label{RemarkEupdate}
  We note in passing that property \eqref{InternalEnergyBalance} is a
  discrete counterpart of property \eqref{resultinv}. More generally:
  forces should not affect the evolution of internal energy, see Lemma
  \ref{lem:InteInvar}. That is why formulas updating the total mechanical
  energy \eqref{eq:energy_update_proof_2_first} and
  \eqref{eq:energy_update_proof_2_second} are identical.
\end{remark}
\begin{algorithm}[t]
  \caption{Second-order Strang splitting.}
  \label{alg:strang}
  Let $\mathcal{S}_{\text{1}}(t^{n}+\dt_n,t^{n})$ represent the solution
  operator associated to the hyperbolic update (Step 1 of
  Algorithm~\ref{alg:yanenko}, and let $\mathcal{S}_{\text{2}}(t^{n}+ 2
  \dt_n,t^{n})$ denote the solution operator associated to the
  source-update (Step 2 of Algorithm~\ref{alg:yanenko}). For a given time
  $t^n$  and given  discrete state $[\rho_h^{n},\mom_h^{n},\totme_h^{n},
  \Epot_h^n]^\transp$ we  compute a second-order approximation as
  follows,
  \begin{multline}
    \label{eq:strang}
    \mathcal{S}_{\text{1}}(t^{n}+\dt_n,t^{n}+\dt_n)
    \circ
    \mathcal{S}_{\text{2}}(t^{n}+2\,\dt_n,t^{n})
    \circ
    \mathcal{S}_{\text{1}}(t^{n}+\dt_n,t^{n})
    \Big([\rho_h^{n},\mom_h^{n},\totme_h^{n}, \Epot_h^n]\Big).
  \end{multline}
\end{algorithm}

\subsection{Incorporating a time-dependent background density}
\label{subse:background_density}

As discussed in Section~\ref{subse:intro:governing_equations}
the Euler-Poisson equations \eqref{Model} are often augmented by an
additional, prescribed background density $\rho_b$ driving the system.
Examples include incorporating an electrostatic potential into the system
caused by positive ions (in context of Example~\ref{ex:electron_fluid}), or
modeling a background density of \emph{dark matter} (in context of
Example~\ref{ex:gravity}) \cite{Zeldovich1989}.
The case of a time-independent background density is readily
incorporated into our numerical approach---it only requires to account for
the background density when computing the initial potential, no further
changes to Algorithm~\ref{alg:source_update} are required.

The case of a \emph{time-dependent} background density
$\rho_{b}(\xcoord,t)$ on the other hand requires a slight modification.
Here, the original evolution equation \eqref{EfieldEqEP} that include the
time derivative of the background potential have to be discretized. This
then leads to a modified linear system that has to be solved in
Algorithm~\ref{alg:source_update} reading
\begin{multline*}
  a^+_{\dt_n}(\Epot_h^{n+1}, \EpotBasis_i) =
  a^-_{\dt_n}(\Epot_h^{n}, \EpotBasis_i)
  \;+\; \dt_n \alpha \langle\rho_h^n\vel_h^{n},
  \nabla\EpotBasis_i\rangle
  \\
  \;+\; \alpha\, \langle \rho_{b}(.,t_{n+1}) - \rho_{b}(.,t_{n})
  ,\EpotBasis_i\rangle
  \quad \forall i \in \potvertices.
\end{multline*}


\section{Gauß law restart}
\label{sec:gauss}

The combined method introduced in Section~\ref{subse:procedure} has one
notable defect in that it does not maintain Gauß's law, meaning a discrete
counterpart of
\begin{align}
  \label{eq:gauss}
  - \Delta \Epot = \alpha\, \den + \alpha\,\den_b
\end{align}
does not necessarily hold true. The reason for this is the fact that we
have chosen to use \eqref{EfieldEqEP} as a starting point for our
discretization instead of \eqref{eq:gauss}.%
\footnote{The validity of \eqref{EfieldEqEP} is a direct
consequence of \eqref{eq:gauss} combined with the conservation of mass
property \eqref{RhoEqEP}.}
This choice was motivated by the desire to maintain an energy inequality
\eqref{TotalEnergyBalance} on the discrete level.

We now discuss a family of postprocessing procedures aimed at
recovering a discrete counterpart of \eqref{eq:gauss} while maintaing the
discrete energy inequality \eqref{TotalEnergyBalance}.

\subsection{Full restart of the potential}
\label{subse:full_restart}

The simplest possible procedure consists of simply resetting the computed
potential $\Epot_h^{n+1}$ at the end of a Yanenko step
(Algorithm~\ref{alg:yanenko}) or a Strang step
(Algorithm~\ref{alg:strang}). In light of the lumping strategy employed in
Section~\ref{sec:numerical_approach} for \eqref{ProtoSemiDiscrete} we
propose to first solve for $\tilde\varphi_h^{n+1}\in\FESpacePot$ given by
\begin{align}
  \label{eq:discrete_gauss}
  (\nabla\tilde\Epot_h^{n+1},\nabla\EpotTest)
  \;=\; \alpha\,\langle\rho_h^{n+1}+\rho_b(.,t_{n+1}),\EpotTest\rangle
  \quad\text{for all }\EpotTest\in\FESpacePot,
\end{align}
and then setting $\Epot_h^{n+1}\leftarrow\tilde\Epot_h^{n+1}$. With this
postprocessing strategy the discrete Gauß law (property (v)) is maintained
at the expense of possibly violating the discrete total energy balance (property
(iii), Lemmas~\ref{EnergyStabAffineLemma} and
\ref{EnergyBalanceSourceUpdateStable}). More precisely, unless the inequality
$\|\nabla\tilde\Epot_h^{n+1}\|_{L^2(\Omega)} \leq
\|\nabla\Epot_h^{n+1}\|_{L^2(\Omega)}$ holds true, simply re-setting the
potential cannot lead to an energy stable scheme. The following lemma
establishes error bounds on the consistency error, as well as the total energy
balance violation.
\begin{lemma}
  \label{lem:a_new_hope}
  Let $\bv{U}_h^{n} = [\rho_h^{n},\mom_h^{n},\totme_h^{n},
  \Epot_h^n]^\transp$ be a given discrete state satisfying
  \eqref{eq:discrete_gauss}. Let $\bv{U}_h^{n+1} =
  [\rho_h^{n+1},\mom_h^{n+1},\totme_h^{n+1}, \Epot_h^{n+1}]^\transp$ be the
  update computed by Algorithm~\ref{alg:yanenko}, or~\ref{alg:strang}.
  \\
  (a) Let $\tilde\Epot_h^{n+1}$ be the solution to
  \eqref{eq:discrete_gauss}. Then, under the CFL condition \eqref{eq:cfl},
  further assuming that the hyperbolic update procedure
  (Appendix~\ref{app:hyperbolic}) and the source update procedure
  (Section~\ref{sec:numerical_approach}) is second order accurate in time
  and space, and assuming they produce a sequence of solutions globally
  bounded in the $L^\infty$ norm, then the following estimates hold true:
  \begin{align*}
    \|\nabla\widetilde{\Epot}_h^{n+1} - \nabla\Epot_h^{n+1}\|_{\Ltwo} \;&=\;
    \alpha\, \mathcal{O}(\tau_n\,h),
    \\
    \big|\|\nabla\widetilde{\Epot}_h^{n+1}\|^2_{\Ltwo}
    - \|\nabla\Epot_h^{n+1}\|^2_{\Ltwo}\big| \;&=\;
    \alpha\, \mathcal{O}(\tau_n\,h).
  \end{align*}
  (b) Define a Gauß law residual $\mathcal{R}_h^{n+1}\in\FESpacePot'$ as
  follows
  \begin{align}
    \label{eq:gauss_law_residual}
    \mathcal{R}_h^{n+1}[\EpotTest]
    \;=\; \alpha\,
    \langle\rho_h^{n+1}+\rho_b(.,t_{n+1}),\EpotTest\rangle
    -(\nabla\Epot_h^{n+1},\nabla\EpotTest).
  \end{align}
  Then under the same assumptions as in (a) we have that
  $\|\mathcal{R}_h^{n+1}\|_{\FESpacePot'} \;=\;\mathcal{O}(\tau_n\,h)$.
\end{lemma}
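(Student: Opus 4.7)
The plan is to exploit the fact that \eqref{eq:discrete_gauss} defines $\widetilde{\Epot}_h^{n+1}$ as the Galerkin inverse-Laplacian of $\alpha(\rho_h^{n+1}+\rho_b(\cdot,t_{n+1}))$. Subtracting \eqref{eq:gauss_law_residual} from \eqref{eq:discrete_gauss} yields the error identity
\begin{equation*}
  (\nabla(\widetilde{\Epot}_h^{n+1}-\Epot_h^{n+1}),\nabla\EpotTest)_{\Ltwo}
  \;=\; \mathcal{R}_h^{n+1}[\EpotTest]\qquad\forall\,\EpotTest\in\FESpacePot.
\end{equation*}
Testing against $\EpotTest=\widetilde{\Epot}_h^{n+1}-\Epot_h^{n+1}$ and invoking the ambient boundary conditions (Poincar\'e for Dirichlet, zero-mean in the periodic case) gives $\|\nabla(\widetilde{\Epot}_h^{n+1}-\Epot_h^{n+1})\|_{\Ltwo}\le\|\mathcal{R}_h^{n+1}\|_{\FESpacePot'}$, which combined with part (b) yields the first estimate. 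For the second estimate I would use the polar identity
\begin{equation*}
  \|\nabla\widetilde{\Epot}_h^{n+1}\|_{\Ltwo}^2-\|\nabla\Epot_h^{n+1}\|_{\Ltwo}^2
  =(\nabla(\widetilde{\Epot}_h^{n+1}-\Epot_h^{n+1}),\nabla(\widetilde{\Epot}_h^{n+1}+\Epot_h^{n+1}))_{\Ltwo}
\end{equation*}
and Cauchy--Schwarz, bounding $\|\nabla\widetilde{\Epot}_h^{n+1}\|_{\Ltwo}$ through \eqref{eq:discrete_gauss} and the $L^\infty$-bound on $\rho_h^{n+1}$, and bounding $\|\nabla\Epot_h^{n+1}\|_{\Ltwo}$ through the coercivity of $a^+_{\dtn}$ and the stability of the source update.

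\textbf{Part (b), algebraic reduction.} The key step is to collapse the residual into a one-step mass-balance form. Using the assumed Gau\ss{} law at step $n$ one writes
\begin{equation*}
  \mathcal{R}_h^{n+1}[\EpotTest]=\alpha\langle\rho_h^{n+1}-\rho_h^n,\EpotTest\rangle
  +\alpha\langle\rho_b(\cdot,t_{n+1})-\rho_b(\cdot,t_n),\EpotTest\rangle
  -(\nabla(\Epot_h^{n+1}-\Epot_h^n),\nabla\EpotTest)_{\Ltwo}.
\end{equation*}
Substituting the source-step equation for $\Epot_h^{n+1}$ and rewriting the lumped stabilization term via the nodal momentum update -- exactly the manipulation \eqref{clutter} from the proof of Lemma~\ref{EnergyStabAffineLemma} -- replaces $\tfrac{\dtn^{2}\alpha}{4}\langle\rho_h^{n+1,1}(\nabla\Epot_h^{n+1}+\nabla\Epot_h^n),\nabla\EpotTest\rangle$ by $\tfrac{\dtn\alpha}{2}\langle\mom_h^{n+1,1}-\mom_h^{n+1,2},\nabla\EpotTest\rangle$. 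The background-density contributions cancel and the residual reduces to
\begin{equation*}
  \mathcal{R}_h^{n+1}[\EpotTest]=\alpha\Bigl[\langle\rho_h^{n+1}-\rho_h^n,\EpotTest\rangle-\tfrac{\dtn}{2}\langle\mom_h^{n+1,1}+\mom_h^{n+1,2},\nabla\EpotTest\rangle\Bigr],
\end{equation*}
where $\mom_h^{n+1,1}$ is the momentum after the hyperbolic sub-step and $\mom_h^{n+1,2}$ the momentum after the source sub-step. This compares the discrete one-step mass increment against the weak divergence of a trapezoidal momentum average.

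\textbf{Part (b), consistency estimate and main obstacle.} The final step is to bound this compact expression by $\mathcal{O}(\dtn h)\|\nabla\EpotTest\|_{\Ltwo}$. I would compare against the smooth continuous analogue: integration in time followed by the trapezoidal rule gives $(\rho(t_{n+1})-\rho(t_n),\EpotTest)-\tfrac{\dtn}{2}(\mom(t_n)+\mom(t_{n+1}),\nabla\EpotTest)=\mathcal{O}(\dtn^{3})\|\EpotTest\|_{H^1}$. The discrete-to-continuous differences in $\rho_h$ and $\mom_h^{n+1,2}$ are $\mathcal{O}(\dtn^{2}+h^{2})$ by the second-order accuracy assumption, while the lumped-versus-consistent inner product contributes $\mathcal{O}(h^{2})$ through standard superconvergence on smooth data (accessible thanks to the $L^\infty$ bound together with elliptic regularity inherited through \eqref{eq:discrete_gauss}). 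Under the CFL condition $\dtn\lesssim h$ these assemble into the claimed $\mathcal{O}(\dtn h)$ residual. The main obstacle, and the step I expect to dominate the bookkeeping, is the splitting error: $\mom_h^{n+1,1}$ is the momentum after \emph{only} the hyperbolic sub-step and therefore differs from $\mom(t_{n+1})$ by the missing source contribution at order $\mathcal{O}(\dtn)$. The saving grace is that the symmetric average $\tfrac{1}{2}(\mom_h^{n+1,1}+\mom_h^{n+1,2})$ reconstructs $\mom(t_{n+1/2})$ to second order -- this is the same cancellation of the leading splitting error that makes Strang splitting second-order globally. Expanding this average in a Taylor series around $t_{n+1/2}$, matching it with the trapezoidal midpoint value, and tracking the remaining spatial errors through duality against $\EpotTest\in\FESpacePot$ are the delicate pieces of the proof.
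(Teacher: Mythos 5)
Your proposal follows essentially the same route as the paper's proof: both arguments use the Gauß law at step $n$ together with the energy-stability rewrite of the source update (identity \eqref{clutter}, i.e.\ \eqref{rewritePot}) to collapse the residual to the one-step mass-balance form
\begin{equation*}
  \mathcal{R}_h^{n+1}[\EpotTest]
  =\alpha\Bigl[\langle\rho_h^{n+1}-\rho_h^n,\EpotTest\rangle
  -\tfrac{\dtn}{2}\langle\mom_h^{n+1,1}+\mom_h^{n+1,2},\nabla\EpotTest\rangle\Bigr],
\end{equation*}
and both then conclude by second-order consistency of the sub-steps, the $L^\infty$ bound, and the CFL relation $\tau_n\lesssim h$; your derivation of (a) from (b) via the error identity and duality is just the paper's final step run in the opposite logical order, which is harmless since your part (b) is self-contained.

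One intermediate claim is overstated, though the conclusion survives. You assert that $\tfrac12(\mom_h^{n+1,1}+\mom_h^{n+1,2})$ reconstructs $\mom(t_{n+1/2})$ to second order by a Strang-type cancellation. This is false for the Yanenko step (and for the split as defined here): $\mom_h^{n+1,1}$ misses the source contribution while $\mom_h^{n+1,2}$ contains it, so the average differs from the trapezoidal average $\tfrac12(\mom(t_n)+\mom(t_{n+1}))$ by the \emph{flux} part of $\tfrac{\dtn}{2}\partial_t\mom$, an $\mathcal{O}(\tau_n)$ discrepancy, not $\mathcal{O}(\tau_n^2)$. The repair is exactly the bookkeeping the paper uses: the $\mathcal{O}(\tau_n)$ mismatch in the average is multiplied by the prefactor $\dtn/2$, contributing $\mathcal{O}(\tau_n^2)\|\nabla\EpotTest\|$ to the residual, which the CFL condition absorbs into $\mathcal{O}(\tau_n h)$. (The paper implements this by swapping $\rho_h^{n+1}\vel_h^{n+1,2}\to\rho_h^{n}\vel_h^{n}$ at cost $\mathcal{O}(\tau_n^2)$, so that the remaining term (I) involves only hyperbolic-step quantities and can be handled by the stipulated second-order consistency of the hyperbolic mass update.) With that correction your accounting closes, and your argument is at the same level of rigor as the paper's, which itself ``stipulates'' the Crank--Nicolson consistency of the mass balance rather than proving it.
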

\begin{proof}
  We outline a proof for the Yanenko split. The corresponding result for
  Strang splitting follows analogously. A key observation for proving
  energy stability of the source update scheme was identity
  \eqref{rewrite}, in particular \eqref{rewritePot} reads when accounting
  for the previous hyperbolic update step (in the notation of
  Algorithm~\ref{alg:yanenko} and reintroducing the background charge
  as discussed in Section \ref{subse:background_density}):
  \begin{multline*}
    (\nabla\Epot_h^{n+1}, \nabla\EpotTest_h)
    = (\nabla\Epot_h^{n}, \nabla\EpotTest_h)
    + \frac{\dtn\alpha}2
    \langle\rho_h^{n+1}\big\{\vel_h^{n+1,2}+\vel_h^{n+1,1}\big\},\nabla\EpotTest_h\rangle
    \\
    \;+\;\alpha\,\langle \rho_{b}(.,t_{n+1}) - \rho_{b}(.,t_{n})
    ,\EpotTest_h\rangle.
  \end{multline*}
  Using the assumption on $\nabla\Epot_h^n$ to satisfy the discrete Gauß
  law implies
  \begin{multline*}
    (\nabla\Epot_h^{n+1}, \nabla\EpotTest_h)
    = \alpha\,\Big\{\langle\rho_h^{n}+\rho_b(.,t_{n}),\EpotTest_h\rangle
    + \frac{\dtn}2
    \langle\rho_h^{n+1}\big\{\vel_h^{n+1,2}+\vel_h^{n+1,1}\big\},\nabla\EpotTest_h\rangle
    \Big\}
    \\
    +\alpha\,\langle \rho_{b}(.,t_{n+1}) - \rho_{b}(.,t_{n})
    ,\EpotTest_h\rangle.
  \end{multline*}
  The stated assumption of convergence of the hyperbolic and source update
  steps combined with the assumption that the sequence of approximants
  remains bounded in the $L^\infty$-norm now implies that
  \begin{align*}
    \rho_h^{n+1}v_h^{n+1,2}
    \;=\; \rho_h^{n+1}v_h^{n+1,1} + \mathcal{O}(\tau_n)
    \;=\; \rho_h^{n}v_h^{n} + \mathcal{O}(\tau_n),
  \end{align*}
  where the constant in front of the $\tau_n$ is uniform in $n$.
  Substituting:
  \begin{multline*}
    (\nabla\Epot_h^{n+1}, \nabla\EpotTest_h)
    = \alpha\,\Big\{
    \underbrace{\langle\rho_h^{n},\EpotTest_h\rangle + \frac{\dtn}2
    \langle\rho_h^{n}\vel_h^{n}+\rho_h^{n+1}\vel_h^{n+1,1},
    \nabla\EpotTest_h\rangle}_{\text{(I)}} \Big\}
    \\
    + \alpha\,\Big\{\langle\rho_{b}(.,t_{n+1}),\EpotTest_h\rangle\Big\}
    +\alpha\,\mathcal{O}(\tau_n^2)\|\nabla\omega_h\|.
  \end{multline*}
  We now stipulate that (I) has the formal structure of a Crank-Nicolson
  time step approximating the balance of mass equation, viz.,
  $\partial_t\rho+\diver{}\mom=0$. Using that the hyperbolic update
  procedure itself is second order accurate in time and space we infer that
  this implies
  $\text{(I)}=\langle\rho_h^{n+1},\omega_h\rangle+
  \mathcal{O}(\tau_n\,h^2+\tau_n^3)\|\omega_h\|$. We conclude that
  \begin{align*}
    (\nabla\Epot_h^{n+1}, \nabla\EpotTest_h)
    & =
    \alpha\langle\rho_h^{n+1}+\rho_{b}(.,t_{n+1}),\omega_h\rangle
    +\alpha\,\mathcal{O}(\tau_n^2)\|\nabla\omega_h\|.
    \\
    & =
    (\nabla\Epot_h^{n}, \nabla\EpotTest_h)
    +\alpha\,\mathcal{O}(\tau_n\,h)\|\nabla\omega_h\|.
  \end{align*}
  Here, we have used the fact that CFL condition \eqref{eq:cfl} combined
  with our assumptions ensures that $\tau_n \lesssim h$. Substituting
  $\omega_h=\Epot_h^{n+1}\pm\Epot_h^{n}$ shows (a). Similarly, (b) follows
  from the fact that
  \begin{align*}
    \|\mathcal{R}_h^{n+1}\|_{\FESpacePot'}
    = \sup_{\EpotTest_h \in \FESpacePot}
    \frac{|\alpha\,\langle\rho_h^{n+1}+\rho_{b}(.,t_{n+1}),\EpotTest\rangle
    -(\nabla\Epot_h^{n+1},\nabla\EpotTest)|}{\|\EpotTest_h\|_{\Hone}}
    \le \|\nabla\tilde\Epot_h^{n+1}-\nabla\Epot_h^{n+1}\|,
  \end{align*}
  which concludes the proof.
\end{proof}
In summary this implies that restarting the potential after every time step
by solving \eqref{eq:discrete_gauss} does not degrade the approximation
order of the scheme, but does introduce an energy balance violation of
the same order.

\subsection{Energy balance via artificial relaxation}
\label{subse:relax}
The energy balance violation introduced by the full restart is not
desirable. As a remedy we propose an additional relaxation that
reestablished the energy balance. Given $\Epot_h^{n+1}$ computed with the
update procedure (either Algorithm~\ref{alg:yanenko} or \ref{alg:strang}
respectively) and $\tilde\Epot_h^{n+1}$ computed by solving
\eqref{eq:discrete_gauss} we introduce an instantaneous artifical
relaxation to the system in order to preserve a discrete energy balance
\eqref{TotalEnergyBalance}. In other words, we artificially lower the
kinetic energy of the system in order to balance (local) overshoots of the
potential energy introduced by the restart procedure at the end of each
time step. In this context it is easiest to reuse
Algorithm~\ref{alg:damping}, but instead of using a physical relaxation
time $\chartime$, we invoke the algorithm by passing an appropriate,
numerical relaxation time $\chartime_P$. To this end let
$\breve{\mathcal{T}}$ be a partition of $\Omega$ consisting of patches
$P\in\breve{\mathcal{T}}$ with $P\subset\mathcal{T}_h$. In the numerical
tests reported in Section~\ref{sec:numerics} we use $\breve{\mathcal{T}}
=\{\Omega\}$, as well as $\breve{\mathcal{T}} =\mathcal{T}_h$. We then
proceed as follows:

\begin{itemize}
  \item[1.] Perform the hyperbolic and source update by using
    Algorithms~\ref{alg:yanenko}, or~\ref{alg:strang}.
  \item[2.] Compute $\tilde\varphi^{n+1}_h$ given by
    \eqref{eq:discrete_gauss} and compute damping parameters:
    For each patch $P\in\breve{\mathcal{T}}$, compute the quantities
    \begin{align*}
      \delta E_P \;&:=\; \frac{1}{2\,\alpha} \Big(
      \big\|\nabla\tilde\Epot_h^{n+1}\big\|_{\bv L^2(P)}^2 -
      \big\|\nabla\Epot_h^{n+1}\big\|_{\bv L^2(P)}^2 \Big)
      \\
      E_{\text{k},P} \;&:=\; \sum_{\text{``}i\in P\text{''}}
      m_i\frac{\big|\momVect_i^{n+1}\big|^2_{\ell^2}}{2\,\varrho_i^{n+1}},
      \\\text{and define}\qquad
      \lambda^{n+1}_P \;&:=\; - \frac{1}{2\,\dt_n}\ln\,\Big[1-\Big(\frac{\delta
      E_P}{\,E_{\text{k},P}}\Big)_+\Big]\qquad\text{for each }
      P\in\breve{\mathcal{T}}.
    \end{align*}
  \item[3.]
    Perform the relaxation update and total mechanical energy update using
    Algorithm~\ref{alg:damping}, that is, we set
    \begin{align*}
      \{ \widetilde{\rho}_h^{n+1},
      \widetilde{\mom}_h^{n+1},
      \widetilde{\totme}_i^{n+1},
      \widetilde{\Epot}_h^{n+1}\}
      \leftarrow
      \texttt{relaxation\_update}
      (\{\rho_h^{n+1}, \mom_h^{n+1},
      \totme_h^{n+1},\Epot_h^{n+1}\}, \dt_n, \chartime_P)
    \end{align*}
    with an artificial \emph{local relaxation time} of $\chartime_P :=
    \min\big(\chartime\,,\, \big(\lambda^{n+1}_P\big)^{-1}\big)$ on each
    $P\in\breve{\mathcal{T}}$.
\end{itemize}
We note that the application of above algorithm is actually independent of
the chosen $\dt_n$: The factor $\dt_n$ in the definition of
$\lambda^{n+1}_P$ will cancel with the time-step in
Algorithm~\ref{alg:damping}.

\begin{lemma}[Balance of total energy]
  \label{lem:balance_total_energy_gauss}
  Let $\partial_t\rho_b\equiv0$. Provided that $\delta E_P <
  \,E_{\text{k},P}$ the update procedure described above preserves an
  energy inequality:
  \begin{align}
    \label{TotalEnergyBalanceGauss}
    \sum_{i \in \hypvertices} m_i\,\widetilde{\totme}_{i}^{n+1}
    +
    \frac{1}{2\,\alpha} \big\|\nabla\widetilde\Epot_h^{n+1}\big\|^2
    \leq
    \sum_{i \in \hypvertices} m_i\,\totme_{i}^n
    + \frac{1}{2\,\alpha} \big\|\nabla\Epot_h^{n}\big\|^2
  \end{align}
\end{lemma}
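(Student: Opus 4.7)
The plan is to track the total energy across the three stages of the restart procedure and reduce everything to Lemma~\ref{lem:balance_total_energy} combined with one elementary inequality. First, the post-hyperbolic-plus-source state $[\rho_h^{n+1},\mom_h^{n+1},\totme_h^{n+1},\Epot_h^{n+1}]^\transp$ produced by Step~1 satisfies, by Lemma~\ref{lem:balance_total_energy} (with $\force_a\equiv 0$),
\begin{align*}
  \sum_{i\in\hypvertices} m_i\,\totme_i^{n+1} + \frac{1}{2\alpha}\|\nabla\Epot_h^{n+1}\|^2
  \;\le\;
  \sum_{i\in\hypvertices} m_i\,\totme_i^{n} + \frac{1}{2\alpha}\|\nabla\Epot_h^{n}\|^2.
\end{align*}
So it suffices to show that Steps~2 and~3 do not increase the total energy when one replaces $\Epot_h^{n+1}$ by $\widetilde\Epot_h^{n+1}$ and $\totme_i^{n+1}$ by $\widetilde\totme_i^{n+1}$.

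Next, I would compute the effect of Step~3 on the nodal mechanical energy, patch by patch. Applying Algorithm~\ref{alg:damping} with local relaxation time $\chartime_P=\min(\chartime,(\lambda_P^{n+1})^{-1})$ multiplies each $\momVect_i^{n+1}$ ($i\in P$) by $e^{-\dt_n/\chartime_P}$, and the total mechanical energy update then gives, on each patch,
\begin{align*}
  \sum_{i\in P} m_i\bigl(\widetilde\totme_i^{n+1}-\totme_i^{n+1}\bigr)
  \;=\;
  \bigl(e^{-2\dt_n/\chartime_P}-1\bigr)\,E_{\mathrm{k},P}.
\end{align*}
Since $\chartime_P\le(\lambda_P^{n+1})^{-1}$, the definition of $\lambda_P^{n+1}$ (well-posed under the assumption $\delta E_P<E_{\mathrm{k},P}$) yields $e^{-2\dt_n/\chartime_P}\le e^{-2\dt_n\lambda_P^{n+1}}=1-(\delta E_P/E_{\mathrm{k},P})_+$, so
\begin{align*}
  \sum_{i\in P} m_i\,\widetilde\totme_i^{n+1}
  \;\le\;
  \sum_{i\in P} m_i\,\totme_i^{n+1} - (\delta E_P)_+.
\end{align*}
Summing this bound over all patches $P\in\breve{\mathcal{T}}$ gives a global bound on $\sum_i m_i\widetilde\totme_i^{n+1}$.

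Finally, the key bookkeeping identity is that, by definition of $\delta E_P$ and additivity of the $L^2$ norm over the partition,
\begin{align*}
  \sum_{P\in\breve{\mathcal{T}}} \delta E_P
  \;=\;
  \frac{1}{2\alpha}\Bigl(\|\nabla\widetilde\Epot_h^{n+1}\|^2 - \|\nabla\Epot_h^{n+1}\|^2\Bigr).
\end{align*}
Adding $\tfrac{1}{2\alpha}\|\nabla\widetilde\Epot_h^{n+1}\|^2$ to both sides of the patch-summed inequality and using this identity produces
\begin{align*}
  \sum_{i\in\hypvertices} m_i\,\widetilde\totme_i^{n+1} + \frac{1}{2\alpha}\|\nabla\widetilde\Epot_h^{n+1}\|^2
  \;\le\;
  \sum_{i\in\hypvertices} m_i\,\totme_i^{n+1} + \frac{1}{2\alpha}\|\nabla\Epot_h^{n+1}\|^2
  - \sum_{P}\bigl[(\delta E_P)_+ - \delta E_P\bigr].
\end{align*}
The elementary inequality $(x)_+\ge x$ makes the correction term non-positive, so the right-hand side is bounded by the pre-restart energy; chaining with the Step~1 estimate yields \eqref{TotalEnergyBalanceGauss}.

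The main obstacle is not conceptual but rather confirming that the specific choice of $\lambda_P^{n+1}$ in the definition really does calibrate the artificial damping to cancel exactly the positive part of the patchwise overshoot $\delta E_P$; once one recognises that the expression $1-(\delta E_P/E_{\mathrm{k},P})_+$ is precisely the kinetic-energy scaling needed to absorb the restart-induced increase of $\|\nabla\Epot\|^2$, the proof reduces to patchwise accounting and the pointwise inequality $(x)_+\ge x$.
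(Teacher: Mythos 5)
Your proposal is correct and follows essentially the same route as the paper's proof: the calibration $e^{-2\dt_n/\chartime_P}\le e^{-2\dt_n\lambda_P^{n+1}}=1-\big(\delta E_P/E_{\text{k},P}\big)_+$, the elementary bound $(x)_+\ge x$, and the patch-additivity $\sum_{P\in\breve{\mathcal{T}}}\delta E_P=\tfrac{1}{2\alpha}\big(\|\nabla\widetilde\Epot_h^{n+1}\|^2-\|\nabla\Epot_h^{n+1}\|^2\big)$ are exactly the ingredients of \eqref{RelaxProof1}--\eqref{RelaxProof2}, chained with the pre-restart balance of Lemma~\ref{lem:balance_total_energy}. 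The only cosmetic differences are that you track the total mechanical energy patchwise in one stroke (merging the paper's separate kinetic-energy balance and internal-energy invariance), and that your handling of $\chartime_P=\min\big(\chartime,(\lambda_P^{n+1})^{-1}\big)$ via an inequality is in fact slightly more careful than the paper's equality in the first line of \eqref{RelaxProof1}.
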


\begin{proof}
  The relaxation step of Algorithm~\ref{alg:damping}, implies that
  \begin{align*}
    \widetilde{\momVect}_i^{n+1} \;&=\; e^{-\dt_n/\chartime_P} \momVect_i^{n+1}
    \quad\text{for }i\in P.
  \end{align*}
  Squaring this expression, multiplying by $m_i /\varrho_i^{n+1}$ and
  summing over all indices $i$ yields a kinetic energy balance:
  \begin{align}
      \label{RelaxProof1}
      \sum_{i\in\hypvertices}
      m_i \frac{\big|\widetilde{\momVect}_i^{n+1}\big|^2_{\ell^2}}{
      2\,\varrho_i^{n+1}}
      \;&=\;
      e^{-2\dt_n/\chartime_P}\,
      \sum_{i\in\hypvertices}
      m_i\frac{\big|\momVect_i^{n+1}\big|^2_{\ell^2}}{2\,\varrho_i^{n+1}}
      \\\notag
      \;&=\;
      \sum_{P\in\breve{\mathcal{T}}}
      \Big\{\Big(1-\Big(\frac{\delta E_P}{E_{\text{k},P}}\Big)_+ \Big)
      \underbrace{\sum_{i\in P}
      m_i\frac{\big|\momVect_i^{n+1}\big|^2_{\ell^2}}{2\,\varrho_i^{n+1}}
      \Big\}}_{=\;E_{\text{k},P}}
      \\\notag
      \;&\le\;
      \sum_{i\in\hypvertices}
      m_i\frac{\big|\momVect_i^{n+1}\big|^2_{\ell^2}}{2\,\varrho_i^{n+1}}
      \;-\; \sum_{P\in\breve{\mathcal{T}}}\delta E_P
      \\\notag
      \;&=\;
      \sum_{i\in\hypvertices}
      m_i\frac{\big|\momVect_i^{n+1}\big|^2_{\ell^2}}{2\,\varrho_i^{n+1}}
      \;-\;
      \frac{1}{2\,\alpha}
      \big\|\nabla\widetilde\Epot_h^{n+1}\big\|^2 +
      \frac{1}{2\,\alpha}
      \big\|\nabla\Epot_h^{n+1}\big\|^2
  \end{align}
  Now, since we are using formula \eqref{eq:energy_update_proof_2_second} for
  the total mechanical energy update we have that (see Remark
\ref{RemarkEupdate})
\begin{align}\label{RelaxProof2}
\sum_{i \in \hypvertices} m_i \Big(
\widetilde{\totme}_i^{n+1} -
\frac{\big|\widetilde{\momVect}_i^{n+1}\big|_{\ell^2}^2}{2
\varrho_i^{n+1}} \Big) =
\sum_{i \in \hypvertices} m_i
\Big( \totme_i^{n+1} - \frac{\big|\momVect_i^{n+1}\big|_{\ell^2}^2}{2
\varrho_i^{n + 1}}  \Big)
\end{align}
Finally, energy inequality \eqref{TotalEnergyBalanceGauss} now follows by adding
\eqref{RelaxProof1} and \eqref{RelaxProof2}.
\end{proof}

\begin{remark}
  The algorithm can be modified to preserve the energy balance exactly by
  dropping the positive part from the definition of $\lambda^{n+1}_P$,
  \begin{align*}
    \lambda^{n+1}_P \;&:=\; - \frac{1}{2\,\dt_n}\ln\,\Big[1-\frac{\delta
    E_P}{\,E_{\text{k},P}}\Big].
  \end{align*}
  In this case an exact equality is maintained in
  \eqref{TotalEnergyBalanceGauss}. This is particularly desirable for
  maintaining the Lagrangian structure in the attractive formulation
  ($\alpha<0$). With either definition of $\lambda^{n+1}_P$, the internal energy
  will remain invariant because of the properties satisfied by
  \eqref{eq:energy_update_proof_2_second}, see Remark \ref{RemarkEupdate}.
\end{remark}

\begin{remark}
An alternative approach to the artificial relaxation discussed in this
section is to perform a line search blending $\Epot_h^{n+1}$ computed
with the update procedure (Algorithm~\ref{alg:yanenko}, or
\ref{alg:strang} respectively) and $\tilde\Epot_h^{n+1}$ computed by
solving \eqref{eq:discrete_gauss} such that the final update is as close
to the restarted potential $\tilde\Epot_h^{n+1}$ as possible while
maintaining the energy inequality. We summarize this approach in some
detail in Appendix~\ref{app:line_search}.
\end{remark}


\section{Numerical illustrations}
\label{sec:numerics}

We now present a number of numerical computations demonstrating convergence
for the case of smooth solutions (Section~\ref{subse:num:convergence}),
cold plasma oscillation with contact-like discontinuity
(Section~\ref{subse:num:oscillation}), and a numerical simulation of an
electrostatic implosion (Section~\ref{subse:num:implosion}).

The numerical algorithms discussed in Sections~\ref{sec:numerical_approach}
and \ref{sec:gauss} have been implemented using the finite element library
\texttt{deal.II} \citep{dealII93,dealIIcanonical} using mapped
$\mathbb{Q}^1(\widehat{\element})$ finite elements as defined in
\eqref{QuadSpaces}.

\subsection{Grid convergence on quadrilateral meshes}
\label{subse:num:convergence}

A finite element is said to be \emph{distorted} if it cannot be mapped to
the reference element by an affine diffeomorphism. Establishing optimal
convergence rates for finite element approximations on distorted
quadrilaterals and hexahedrons is a delicate issue. Analytical convergence
results for distorted meshes often hinge on the assumption that the
constructed mesh sequence is \emph{asymptotically affine}, meaning that the
mesh distortion of the mesh sequence converges to zero when measured in a
suitable metric; see for example \cite{Arnold2002, Botti2012, Chan2017}.
These results are not just hypothetical: violating the asymptotically
affine property often leads to a reduced convergence order
\cite{Arnold2002}. In our case, the necessity to introduce inexact
quadrature in order to preserve energy stability of the source update
further complicates matters and might be a source of suboptimal convergence
rates. In particular we introduced inexact nodal-point quadrature in
various bilinear forms; see \eqref{LumpingDef}, \eqref{ProtoSemiDiscrete}
and \eqref{BilinearFormQuad}.

In order to assess the dependence of the convergence rate of our scheme on
mesh distortion, quadrature, and Gauß law restart we consider two
different procedures of mesh generation:
\begin{enumerate}
  \item[(i)]
    Asympotically affine, nested mesh sequence: We first create a base coarse
    mesh where each element can be affinely mapped to the reference element
    and apply 5\% random noise to the location of the vertices of the coarse
    mesh. We then use a uniform \emph{bisection} refinement strategy. Upon
    refinement, no further distortion is added to the element shapes,
  \item[(ii)]
    Non affine, non-nested mesh sequence: In this case, for each refinement
    level, we produce an affine mesh and apply random noise to the node
    coordinates. The resulting mesh sequence is neither affine, nor
    asymptotically affine, nor nested.
\end{enumerate}
and three different strategies for enforcing the Gauß law: (a) no Gauß law
restart, (b) Gauß law restart with artificial relaxation
(Section~\ref{subse:relax}), and (c) full Gauß law restart
(Section~\ref{subse:full_restart}).

\begin{remark}
  \label{rem:it_is_fine_for_triangles}
  The case of affinely mapped $\mathbb{P}^1$ simplices is simpler. Here,
  our scheme is guaranteed to deliver second order convergence rates. See
  for instance \cite[\S 33.3]{GuermondErnVolII} for a related discussion of
  recovering optimal convergence rates with lumping by using one quadrature
  point per element at the barycenter (or one quadrature point per vertex).
\end{remark}

We manufacture an analytic solution for the Euler-Poisson equations by
starting with the isentropic vortex
(Definition~\ref{defi:isentropic_vortex}), which is an exact solution of
the Euler equations, and adding a background density
$\rho_{\text{a}}(\xcoord, t) = -\rho(\xcoord,t)$, where $\rho(\xcoord,t)$
is the density field of the isentropic vortex; see
Section~\ref{subse:background_density} for algorithmic details. We enforce
inhomogeneous Dirichlet boundary conditions on the state
$\state:=[\den,\mom,\totme]^T$ of the Euler subsystem enforcing the exact
solution throughout. Similarly, we enforce homogeneous Dirichlet boundary
conditions for the potential during the source update. The initial data of
the simulation is given by (an interpolation of) the exact solution of the
Euler subsystem with the potential set to zero. We set $\alpha = 1$, and
the final time is $\tf = 2$.
\begin{definition}[Isentropic vortex \cite{YeeSand1999}]
  \label{defi:isentropic_vortex}
  For given parameters $\xcoord_0$, $M$, $\gamma$, $\beta$ defined below we
  introduce functions
  \begin{align*}
    \boldsymbol{r}(\xcoord, t) &:= \xcoord - \xcoord_0 - M t,
    \quad
    f(\xcoord, t) := \tfrac{\beta}{2 \pi} e^{\frac{1}{2} (1 -
    |\boldsymbol{r}|^2)},
    \quad
    T(\xcoord,t) := 1 - \tfrac{\gamma - 1}{2 \gamma} f^2,
  \end{align*}
  depending on position $\xcoord \in \mathbb{R}^2$ and time $t \in
  \mathbb{R}^+$. Then, the state $u(\xcoord, t)$ defined by the primitive
  quantities (density, velocity, pressure),
  \begin{align*}
    \rho(\xcoord,t) := T^{\tfrac{1}{\gamma - 1}},
    \quad
    u(\xcoord,t) := M - f \boldsymbol{r}_2,
    \quad
    v(\xcoord,t) := M + f \boldsymbol{r}_1,
    \quad
    p(\xcoord,t) := \rho^{\gamma},
  \end{align*}
  is a solution of the compressible Euler equations. We use the following
  choice of free parameters throughout: $\xcoord_0 = [4,4]^\transp$,
  $\gamma = 5/3$, $M = 2$, $\beta = 5$, in the computational domain $\Omega
  = [-5,15] \times [-5,15]$.
\end{definition}

\begin{table}[p!]
  \caption{\label{tab:convergence_affine}%
    Convergence rates for the second order scheme
    Algorithm~\ref{alg:strang} with semi-lumped source update
    \eqref{ProtoSemiDiscrete} for
    case (i) the asympotically affine, nested
    mesh sequence. Error norms $\delta_{\text{euler},h}$ and
    $\delta_{\text{pot},h}$,(see \eqref{eq:error_norms}, for three
    different restart strategies: (a) no restart, (b) relaxation, (c) full
    restart.}
  \begin{center}
    \begin{tabular}{rclclclcl}
      \toprule
      & \multicolumn{4}{c}{\bfseries (a) no restart}
      & \multicolumn{4}{c}{\bfseries (b) relaxation}\\
      \cmidrule(lr){2-5}
      \cmidrule(lr){6-9}
      & $\delta_{\text{euler},h}$ & rate & $\delta_{\text{pot},h}$ & rate
      & $\delta_{\text{euler},h}$ & rate & $\delta_{\text{pot},h}$ & rate
      \\[0.3em]
      1 & 1.561e-01 & --   & 2.723e-02 & --   & 7.844e-02  & --   & 1.071e-02 & --   \\
      2 & 6.827e-02 & 1.19 & 8.622e-03 & 1.66 & 3.912e-02  & 1.00 & 5.280e-03 & 1.02 \\
      3 & 2.133e-02 & 1.68 & 2.370e-03 & 1.86 & 1.372e-02  & 1.51 & 1.747e-03 & 1.59 \\
      4 & 6.152e-03 & 1.79 & 6.180e-04 & 1.94 & 4.086e-03  & 1.75 & 4.781e-04 & 1.87 \\
      5 & 1.637e-03 & 1.91 & 1.561e-04 & 1.99 & 1.087e-03  & 1.91 & 1.210e-04 & 1.98 \\
      \bottomrule
    \end{tabular}
    \vspace{0.5em}

    \begin{tabular}{rclcl}
      \toprule
      & \multicolumn{4}{c}{\bfseries (c) full restart} \\
      \cmidrule(lr){2-5}
      & $\delta_{\text{euler},h}$ & rate & $\delta_{\text{pot},h}$ & rate
      \\[0.3em]
      1 & 7.844e-02 & --   & 1.071e-02 & --   \\
      2 & 3.912e-02 & 1.00 & 5.279e-03 & 1.02 \\
      3 & 1.372e-02 & 1.51 & 1.747e-03 & 1.60 \\
      4 & 4.086e-03 & 1.75 & 4.781e-04 & 1.87 \\
      5 & 1.086e-03 & 1.91 & 1.210e-04 & 1.98 \\
      \bottomrule
    \end{tabular}
  \end{center}

  \vspace{4em}

  \caption{\label{tab:convergence_non_affine}%
    Convergence rates for the second order scheme
    Algorithm~\ref{alg:strang} with fully lumped source update
    \eqref{ProtoSemiDiscrete} with modified bilinear form
    \eqref{BilinearFormQuad} for case (ii) the non-affine, unnested mesh
    sequence. Error norms $\delta_{\text{euler},h}$ and
    $\delta_{\text{pot},h}$,(see \eqref{eq:error_norms}, for three
    different restart strategies: (a) no restart, (b) relaxation, (c)
    full restart.}
  \begin{center}
    \begin{tabular}{rclclclcl}
      \toprule
      & \multicolumn{4}{c}{\bfseries (a) no restart}
      & \multicolumn{4}{c}{\bfseries (b) relaxation}\\
      \cmidrule(lr){2-5}
      \cmidrule(lr){6-9}
      & $\delta_{\text{euler},h}$ & rate & $\delta_{\text{pot},h}$ & rate
      & $\delta_{\text{euler},h}$ & rate & $\delta_{\text{pot},h}$ & rate
      \\[0.3em]
      1 & 1.561e-01 & --   & 2.723e-02 & --   & 7.844e-2 & --   & 1.071e-02 & --   \\
      2 & 1.004e-01 & 0.64 & 1.452e-02 & 0.91 & 3.955e-2 & 0.99 & 5.235e-03 & 1.03 \\
      3 & 5.167e-02 & 0.96 & 7.362e-03 & 0.98 & 1.386e-2 & 1.51 & 1.728e-03 & 1.60 \\
      4 & 2.622e-02 & 0.98 & 3.779e-03 & 0.96 & 4.080e-3 & 1.76 & 4.713e-04 & 1.87 \\
      5 & 1.469e-02 & 0.84 & 2.073e-03 & 0.87 & 1.074e-3 & 1.93 & 1.195e-04 & 1.98 \\
      \bottomrule
    \end{tabular}
    \vspace{0.5em}

    \begin{tabular}{rclcl}
      \toprule
      & \multicolumn{4}{c}{\bfseries (c) full restart} \\
      \cmidrule(lr){2-5}
      & $\delta_{\text{euler},h}$ & rate & $\delta_{\text{pot},h}$ & rate
      \\[0.3em]
      1 & 7.844e-02 & --   & 1.071e-02 & --   \\
      2 & 3.912e-02 & 1.00 & 5.279e-03 & 1.02 \\
      3 & 1.372e-02 & 1.51 & 1.747e-03 & 1.60 \\
      4 & 4.086e-03 & 1.75 & 4.781e-04 & 1.87 \\
      5 & 1.086e-03 & 1.91 & 1.210e-04 & 1.98 \\
      \bottomrule
    \end{tabular}
  \end{center}
\end{table}
The computational results summarized in Tables~\ref{tab:convergence_affine}
and~\ref{tab:convergence_non_affine} were computed using the a mesh
sequence of $20 \times20$, $40 \times 40$, $80 \times 80$, $160 \times
160$, and $320 \times 320$ elements and $\text{cfl} = 0.5$. Two separate
error norms are reported:%
\footnote{We observe essentially the same convergence rates in density,
  momentum, and total energy. We have thus chosen to consolidate the
  individal error components of density, momentum, and total energy into a
  combined quantity $\delta_{\text{euler},h}$.}
\begin{subequations}
  \label{eq:error_norms}
  \begin{align}
    \delta_{\text{euler},h} \;&:=\;
    \|\den - \den_h\|_{L^\infty(L^1)} \,+\, \|\mom - \mom_{h}\|_{L^\infty(L^1)}
    \,+\, \|\totme - \totme_h\|_{L^\infty(L^1)},
    \\[0.2em]
    \delta_{\text{pot},h} \;&:=\; \|\Epot - \Epot_h\|_{L^\infty(L^2)}.
  \end{align}
\end{subequations}
We observe that we recover second order convergence for all three choices
of restart, strategies (a) -- (c), for case (i), the asymptotically affine
mesh sequence; cf. Table~\ref{tab:convergence_affine}. Both restart
approaches (strategies (b) and (c)) reduce the error constant compared to no
restart (strategy (a)) whereas the difference in error rates between the
two restart techniques is negligible. The situation changes for the non
affine, unnested mesh sequence of case (ii), cf.
Table~\ref{tab:convergence_non_affine}. Here, the case of no restart ,
strategy (a), shows a reduced convergence order. An enforcement of the Gauß
law (i.e. strategies (b) and (c)) is necessary to recover optimal
convergence rates.

From these results, we may conjecture that Gauß law restart is not
necessary in order to recover optimal rates of convergence for the case of
asymptotically affine mesh sequences. However, for the case of general
meshes, some form of restart of the Gauß law, e.g. strategies (b) and (c),
is necessary in order to recover optimal convergence rates.

\subsection{Perturbed electron gas column: pure plasma oscillation}
\label{subse:num:oscillation}
\begin{figure}[t!]
  \begin{center}
    \setlength\fboxsep{0pt}
    \setlength\fboxrule{0.5pt}
    \subfloat[$t=0$]{\fbox{\includegraphics[width=40mm]{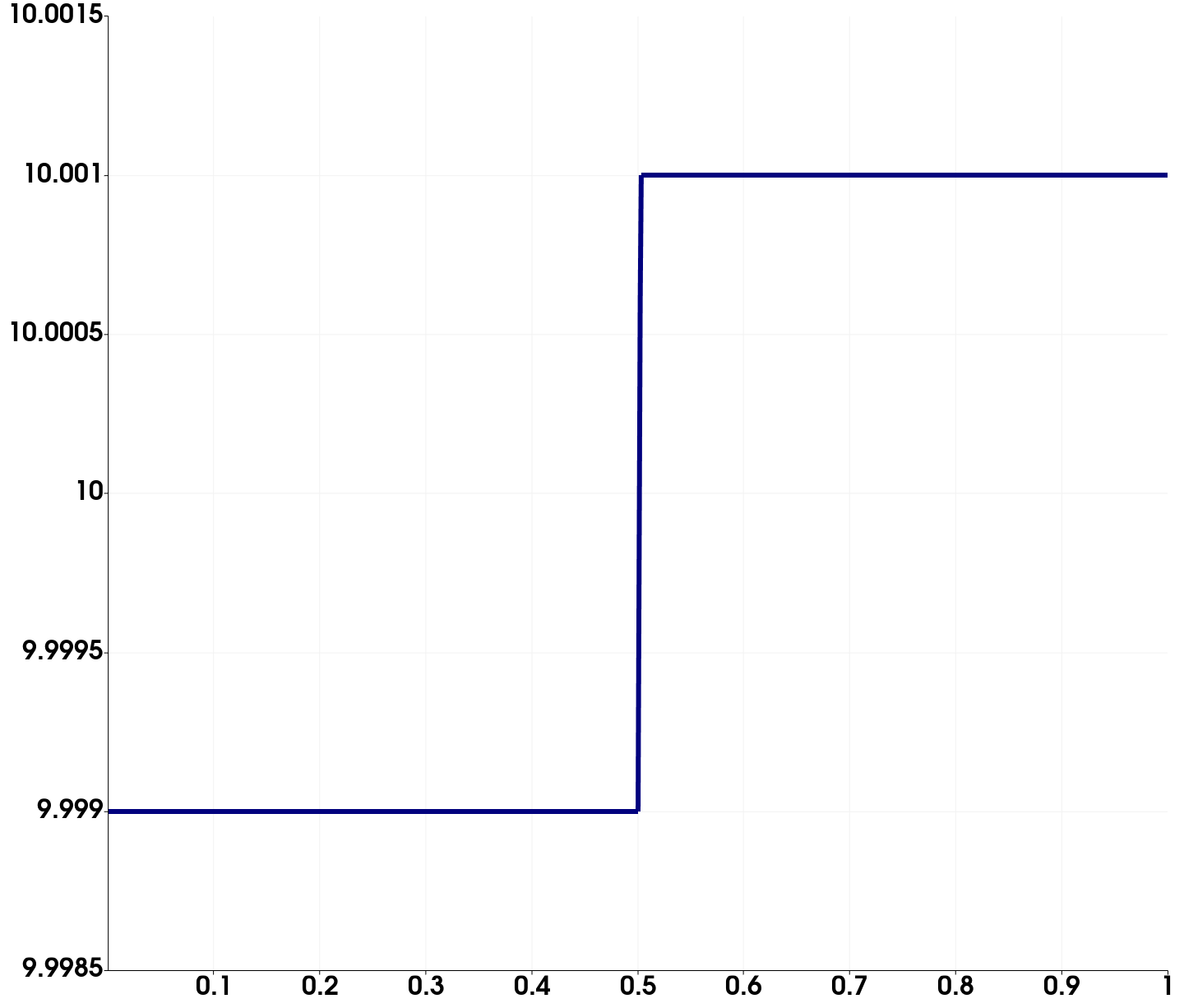}}}\;
    \subfloat[$t=1/8\,\tp$]{\fbox{\includegraphics[width=40mm]{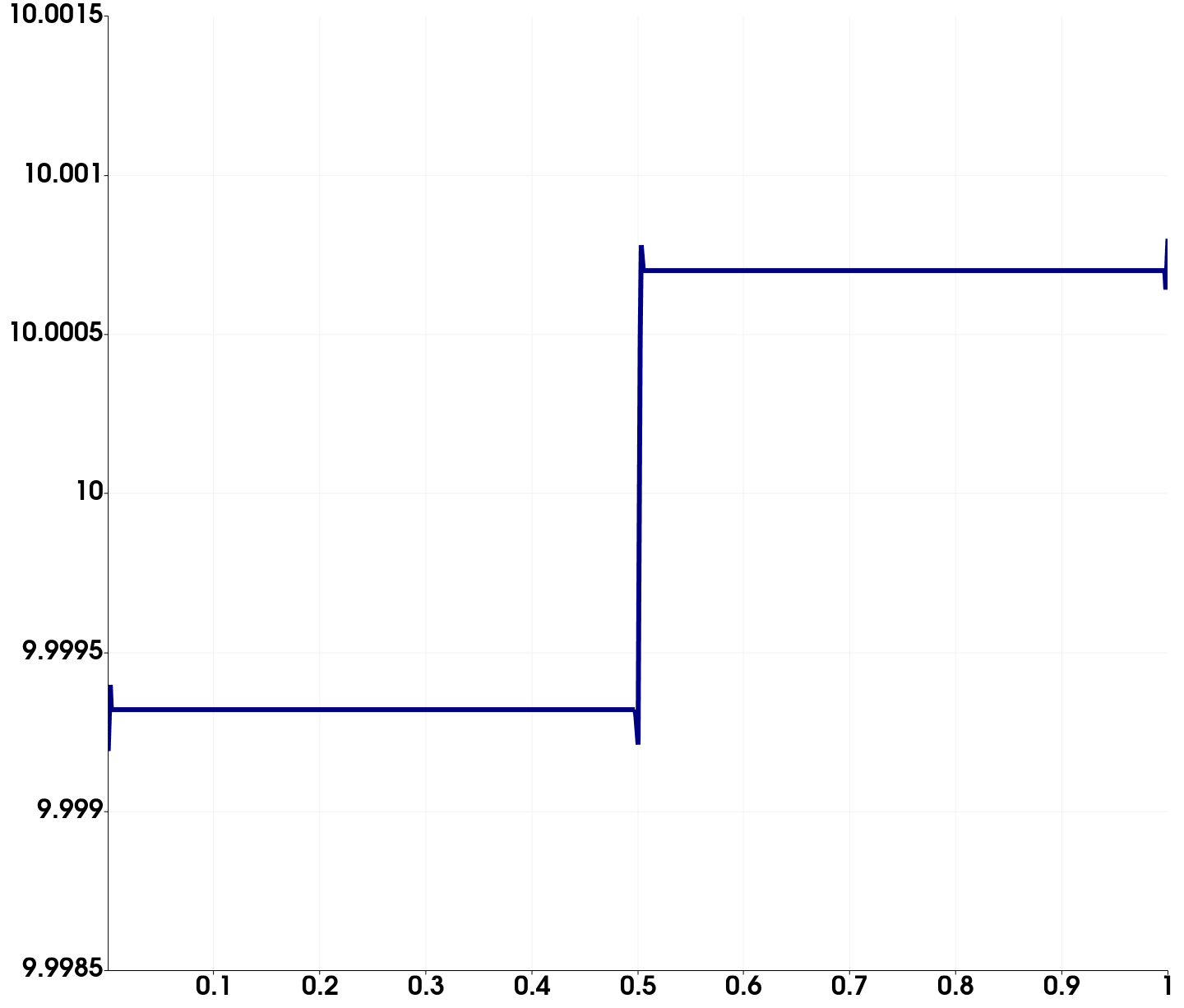}}}\;
    \subfloat[$t=2/8\,\tp$]{\fbox{\includegraphics[width=40mm]{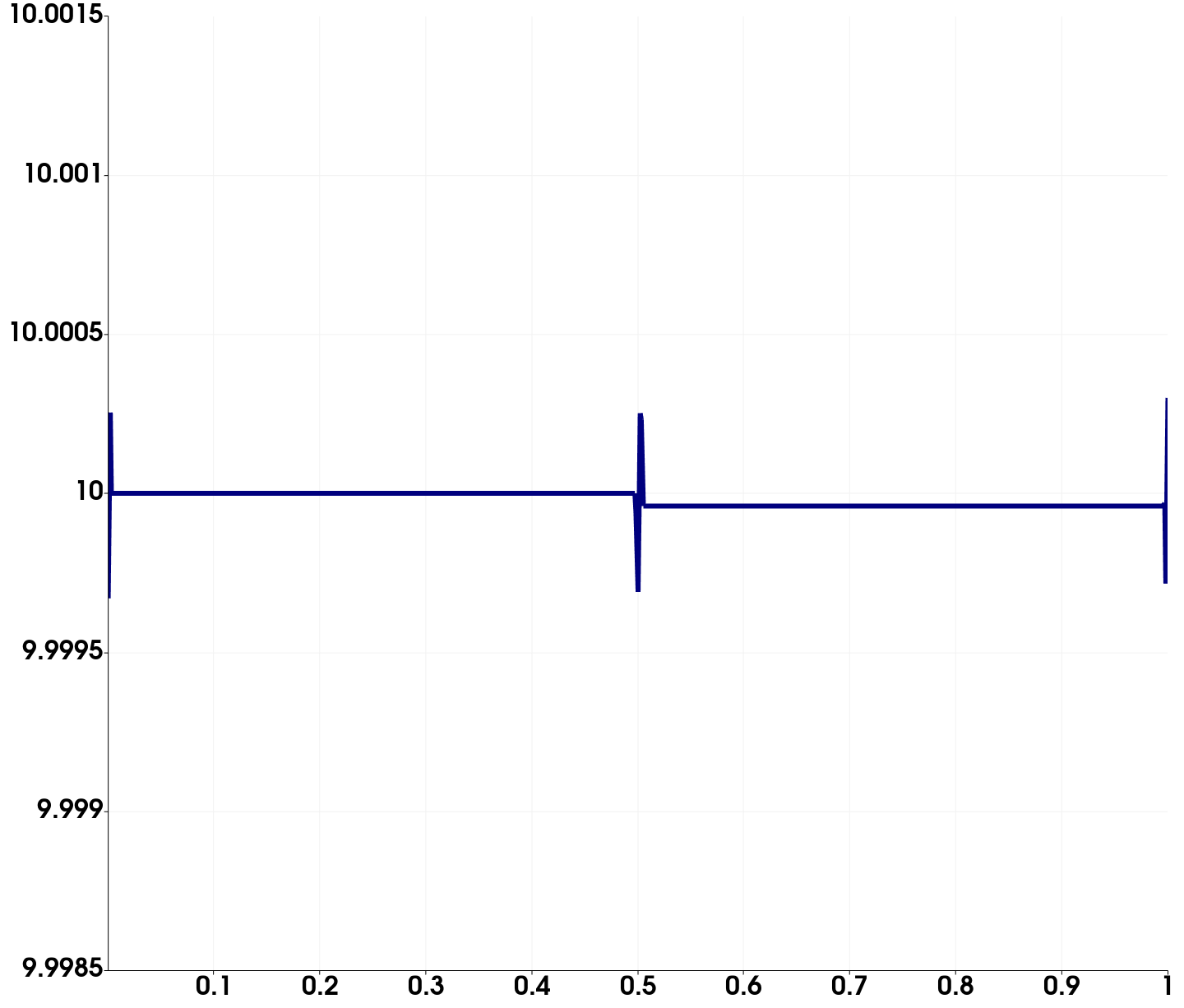}}}

    \subfloat[$t=3/8\,\tp$]{\fbox{\includegraphics[width=40mm]{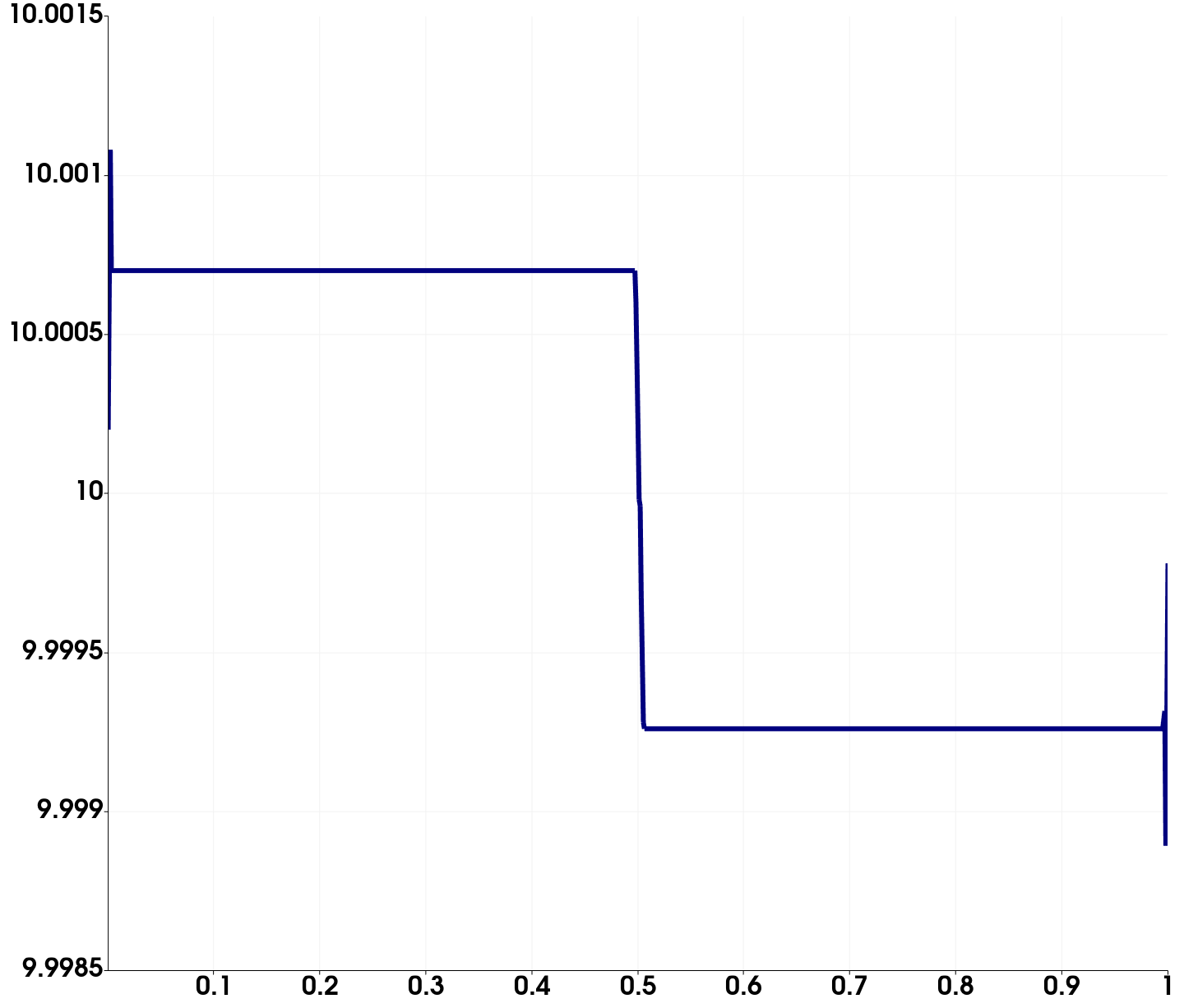}}}\;
    \subfloat[$t=4/8\,\tp$]{\fbox{\includegraphics[width=40mm]{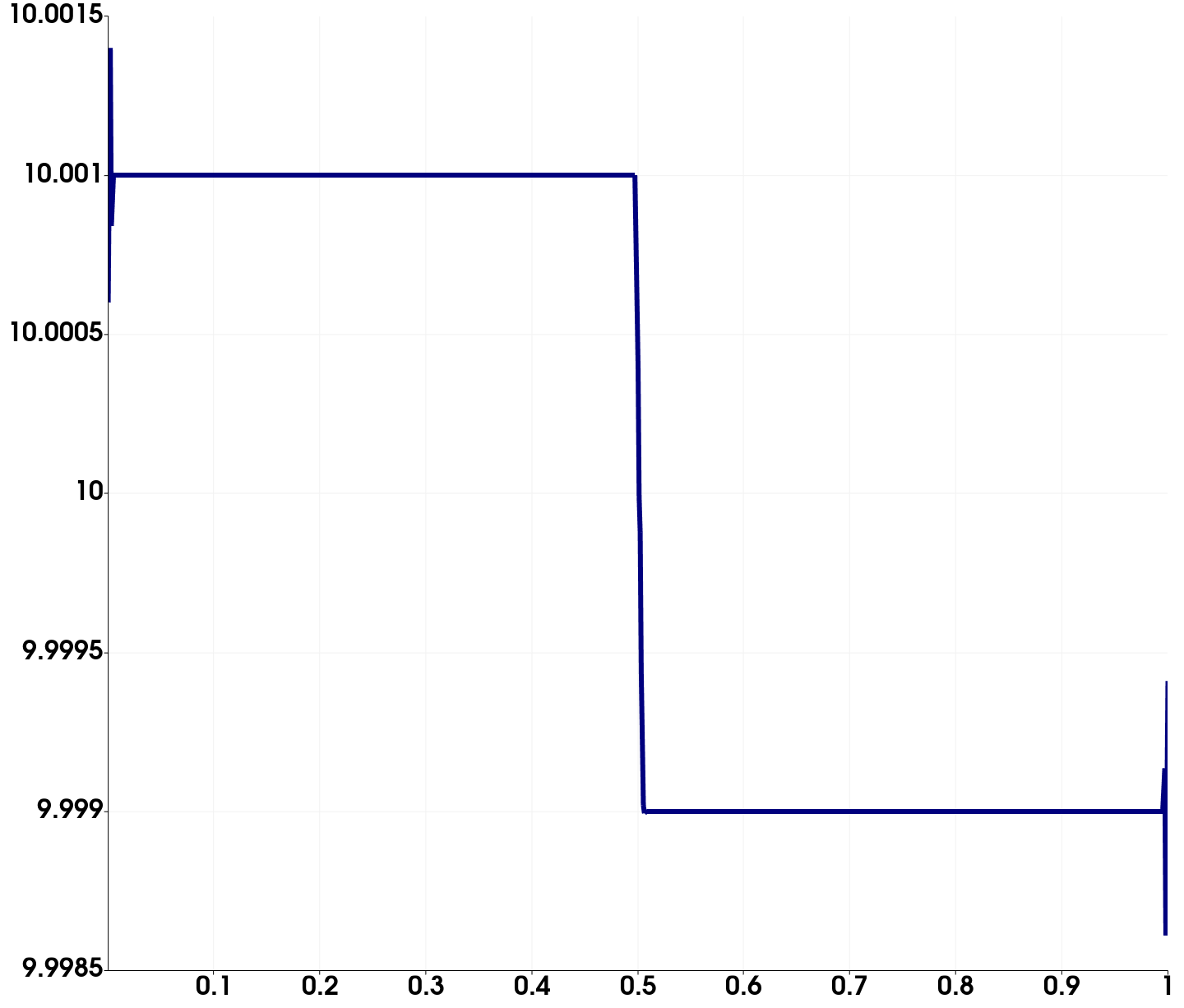}}}\;
    \subfloat[$t=5/8\,\tp$]{\fbox{\includegraphics[width=40mm]{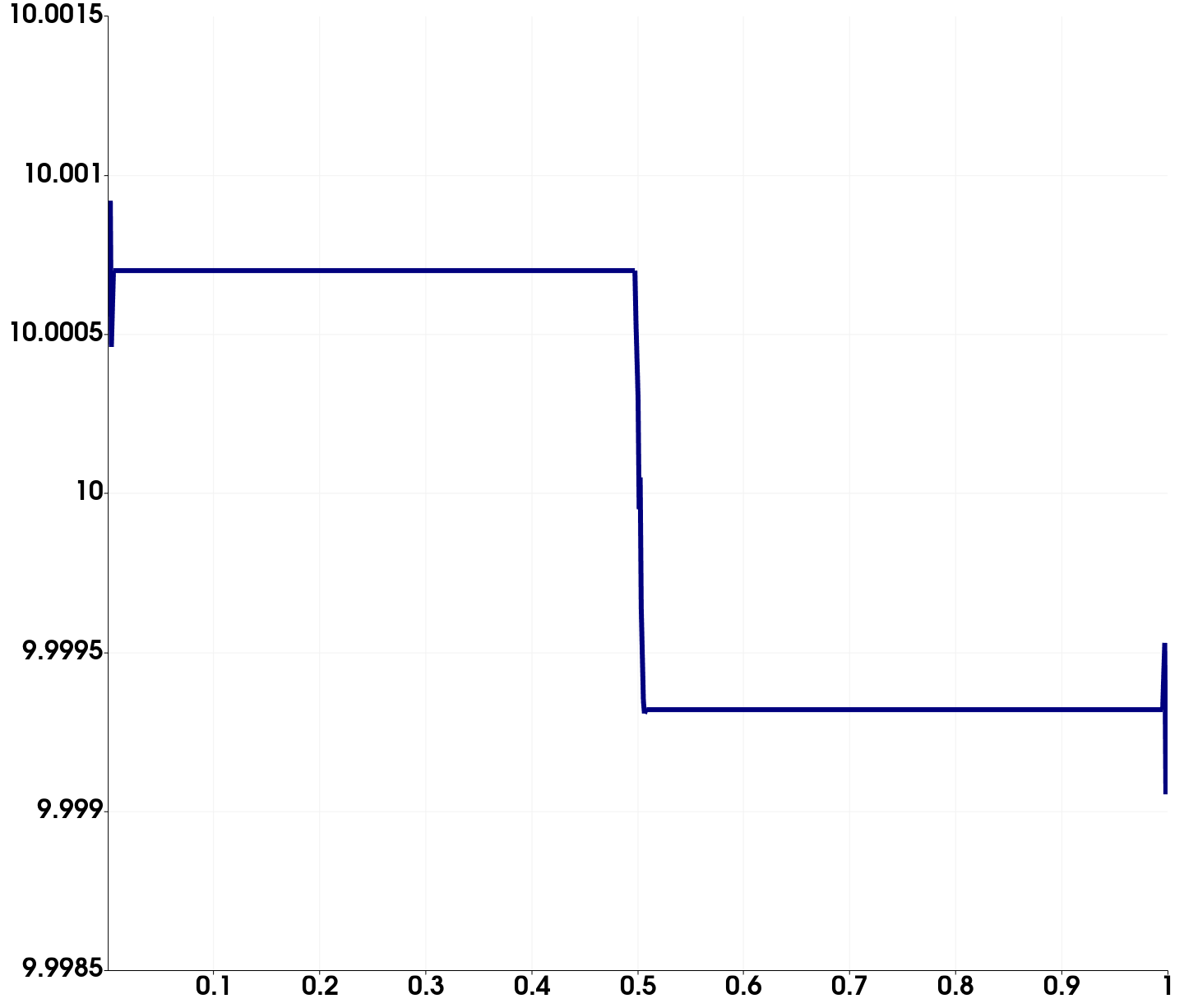}}}

    \subfloat[$t=6/8\,\tp$]{\fbox{\includegraphics[width=40mm]{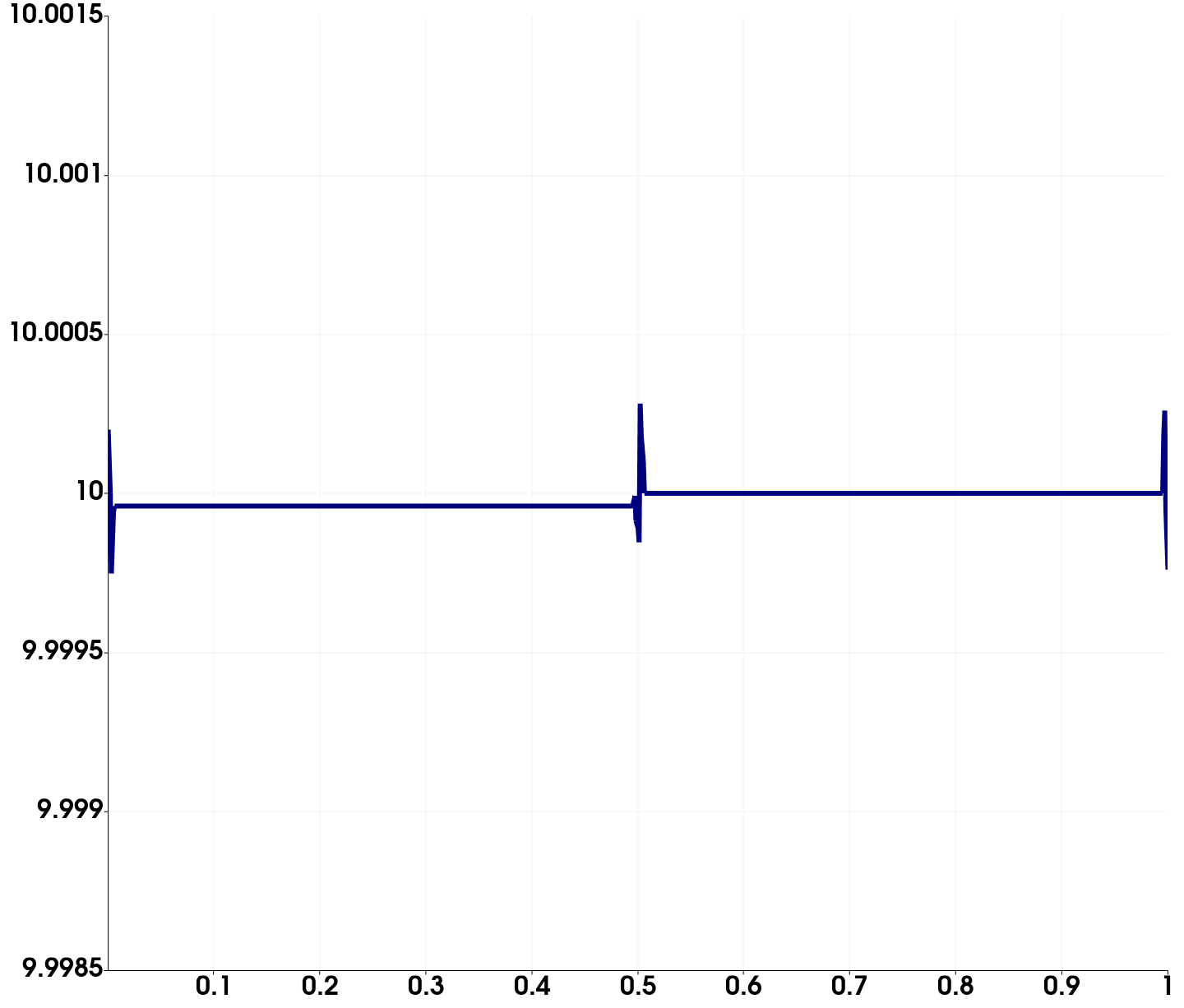}}}\;
    \subfloat[$t=7/8\,\tp$]{\fbox{\includegraphics[width=40mm]{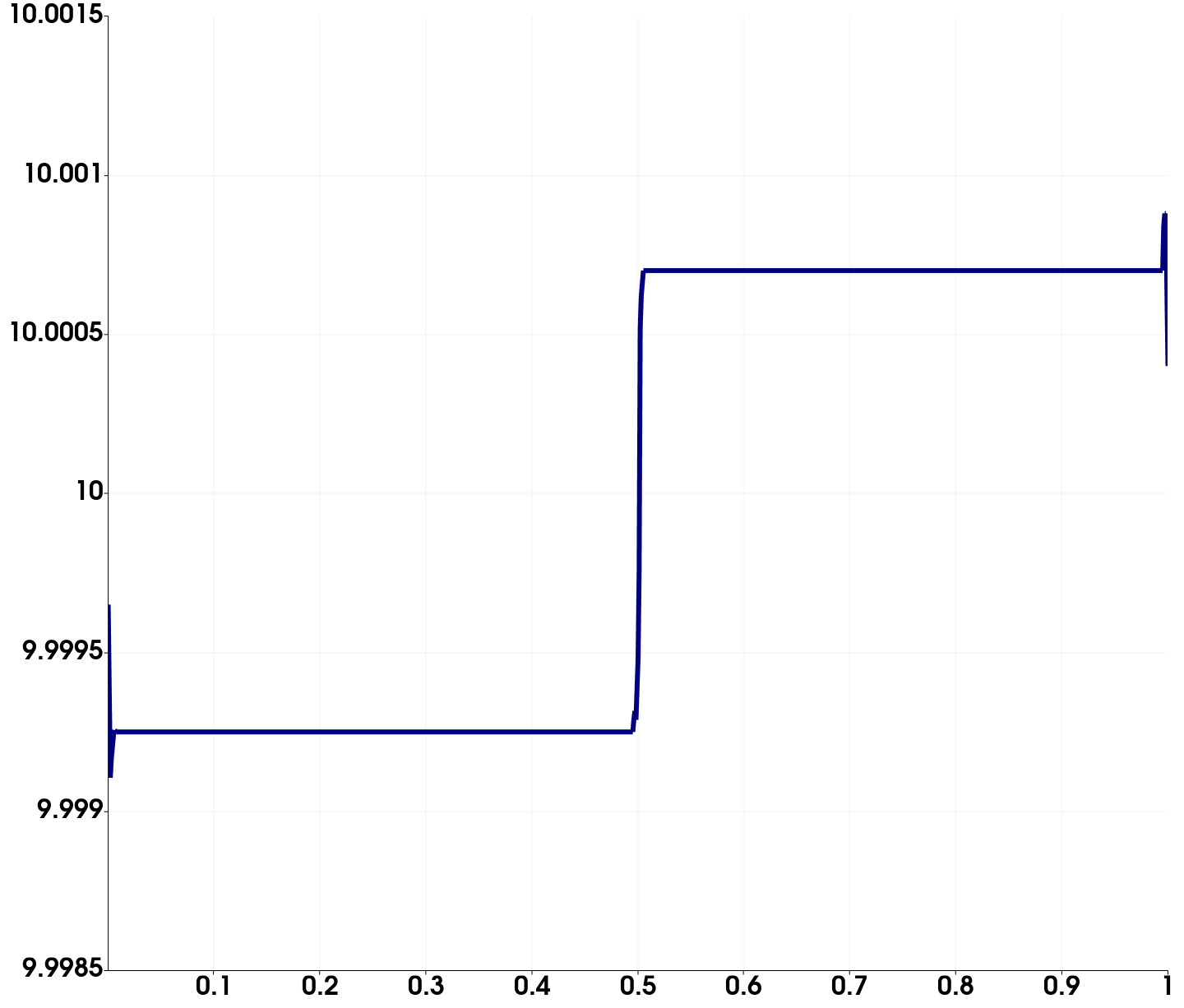}}}\;
    \subfloat[$t=\tp$]{\fbox{\includegraphics[width=40mm]{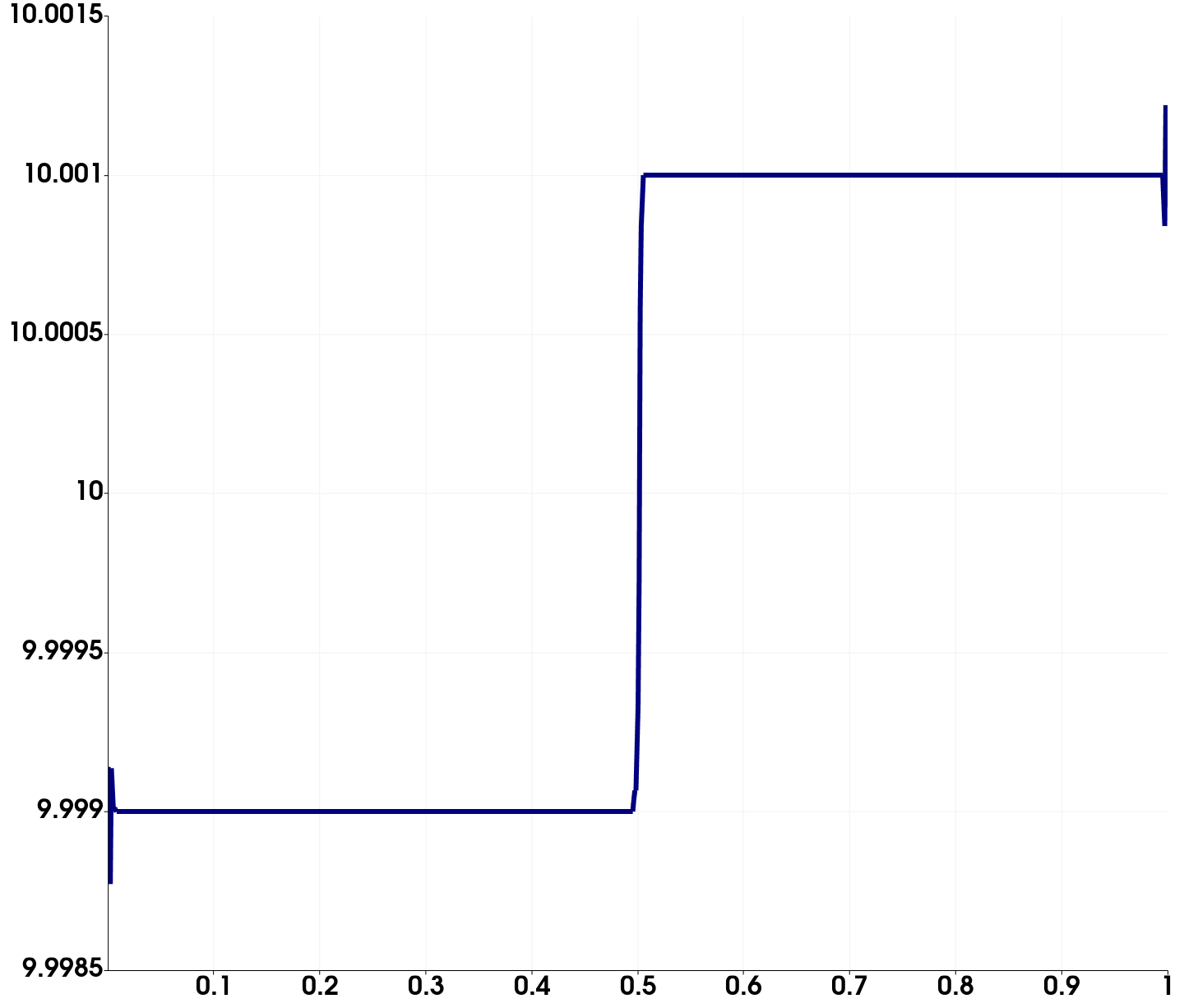}}}
  \end{center}
  \caption{\label{PlasmaOscResolved}%
    Temporal snapshots of the density profile of the pure plasma
    oscillation test case (Definition~\ref{defi:pure_plasma_oscillation})
    for a full period of plasma oscillation. The snapshots are taken
    approximately at times, from (a) to (i), $t = \tfrac{n-1}{8}\tp$,
    $n=1,\ldots,9$. The period of the first and final snapshot coincide.
    The dynamics of the system are almost purely Hamiltonian, meaning the
    time evolution is almost entirely dictated by
    \eqref{ProtoSemiDiscreteEpot}. The stationary contact at $x=0.5$
    remains very well preserved.}
\end{figure}
\begin{figure}[t!]
  \begin{center}
    \setlength\fboxsep{0pt}
    \setlength\fboxrule{0.5pt}
    \subfloat[no restart]{\begin{tabular}[b]{c}
        \fbox{\includegraphics[width=38mm]{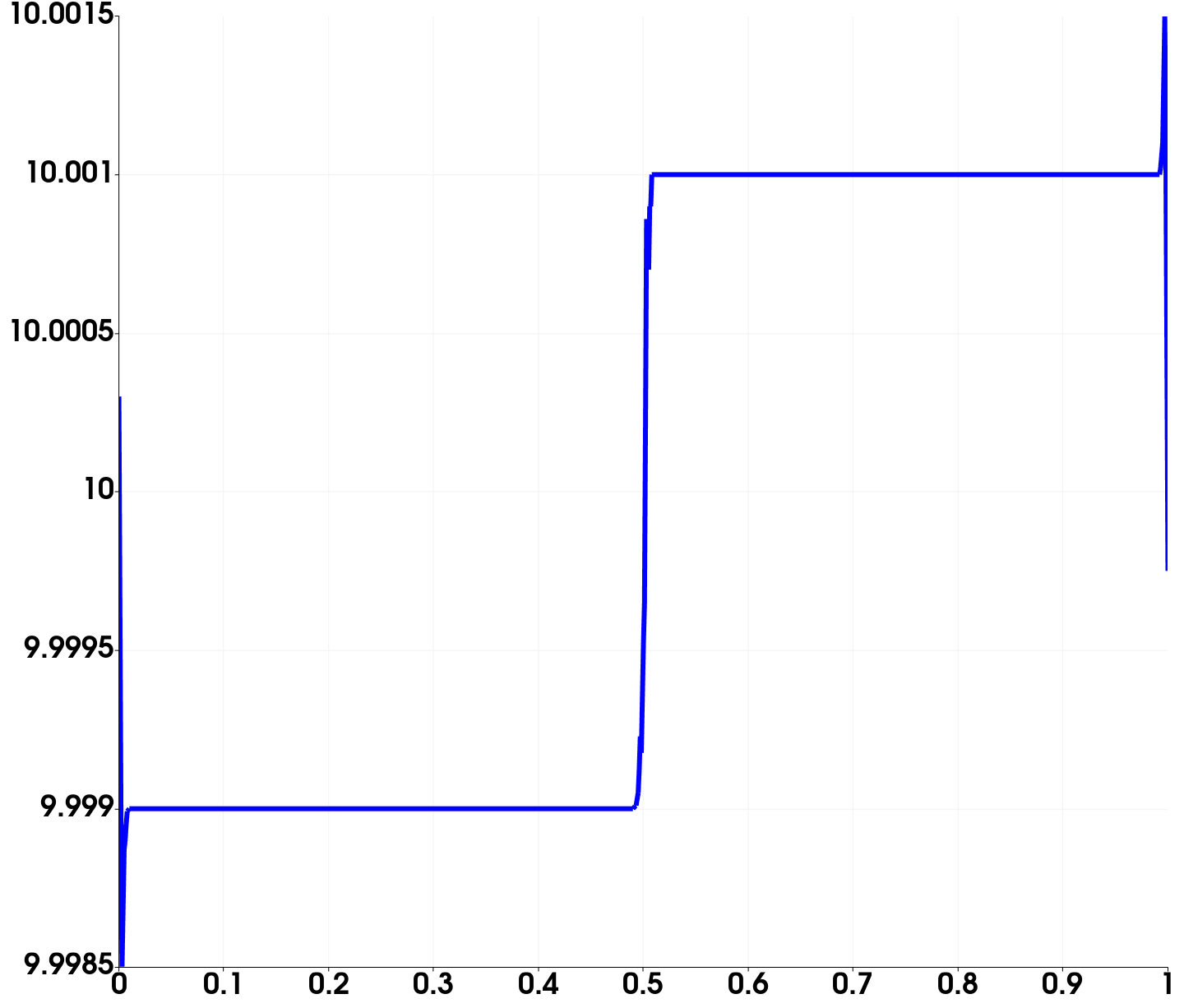}}\\
        \fbox{\includegraphics[width=38mm]{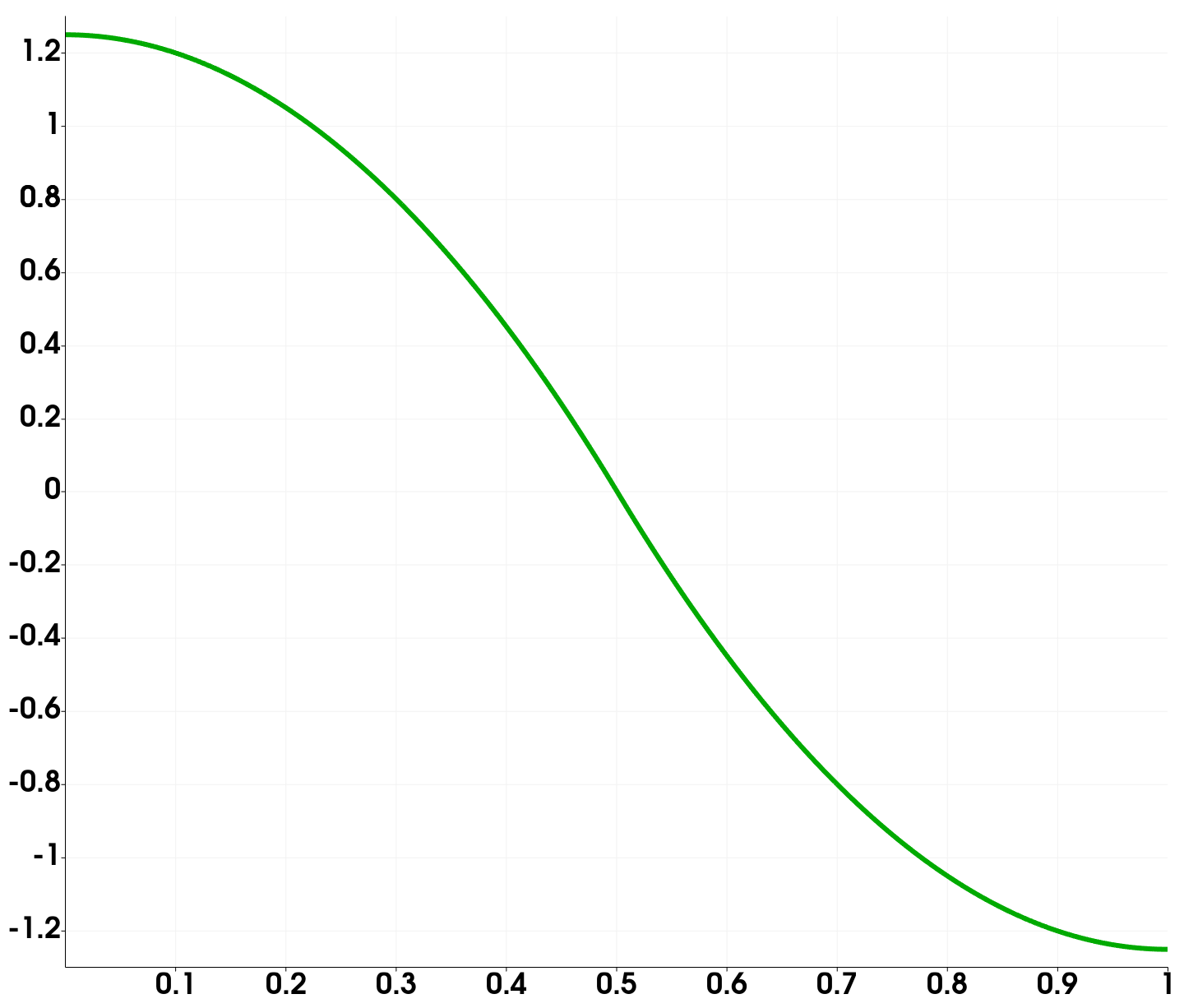}}
      \end{tabular}}
    \subfloat[relaxation]{\begin{tabular}[b]{c}
        \fbox{\includegraphics[width=38mm]{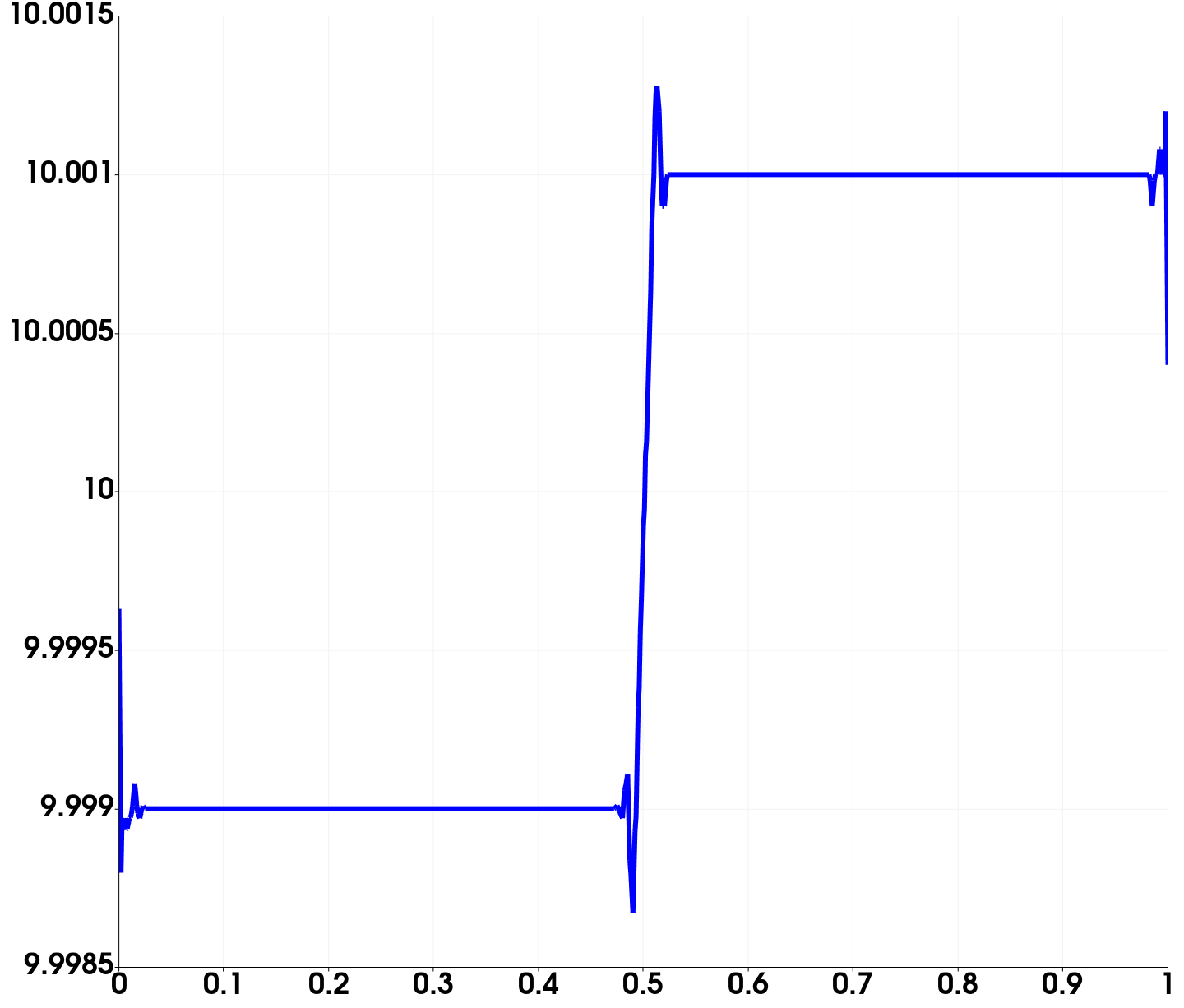}}\\
        \fbox{\includegraphics[width=38mm]{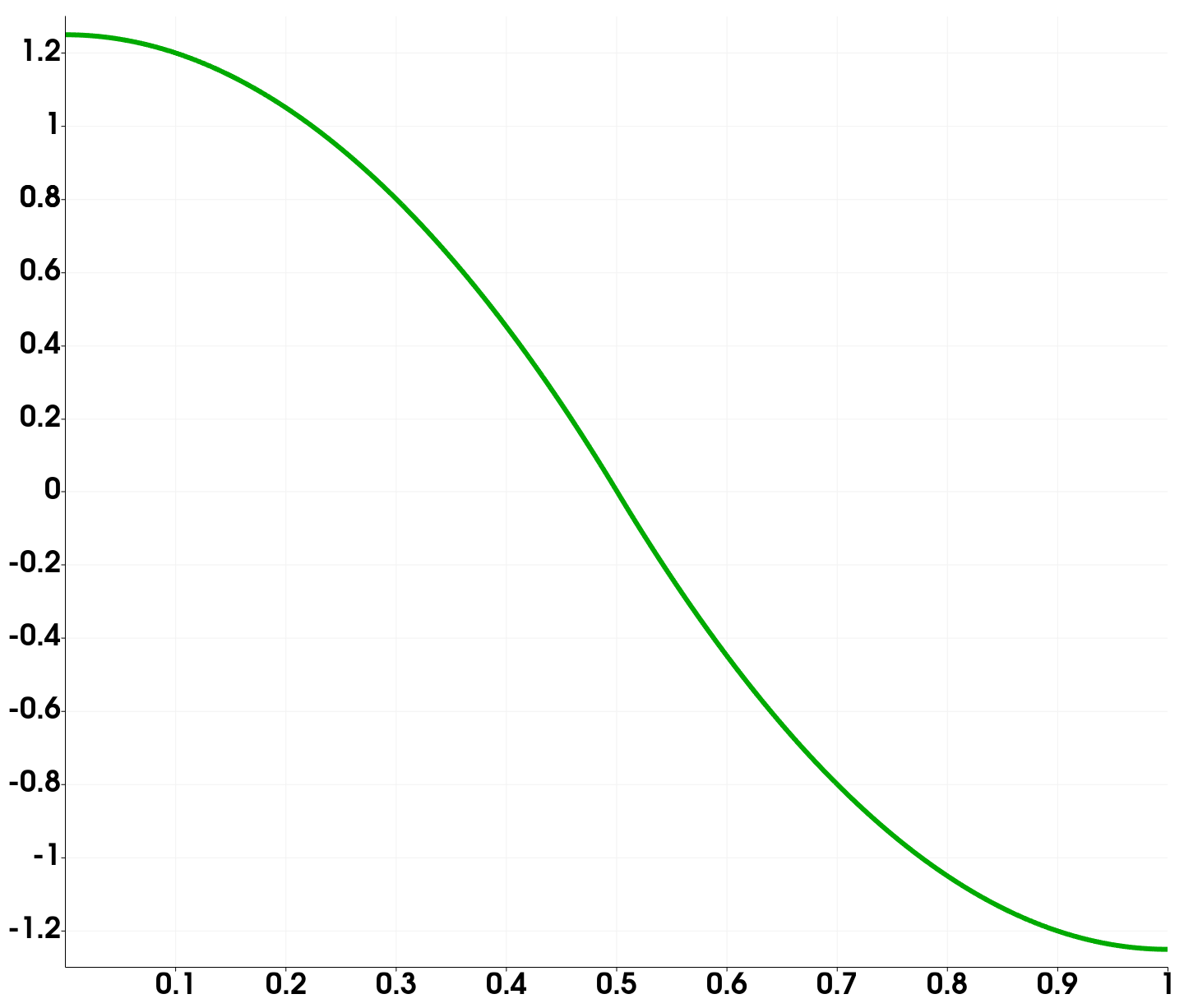}}
      \end{tabular}}
    \subfloat[full restart]{\begin{tabular}[b]{c}
        \fbox{\includegraphics[width=38mm]{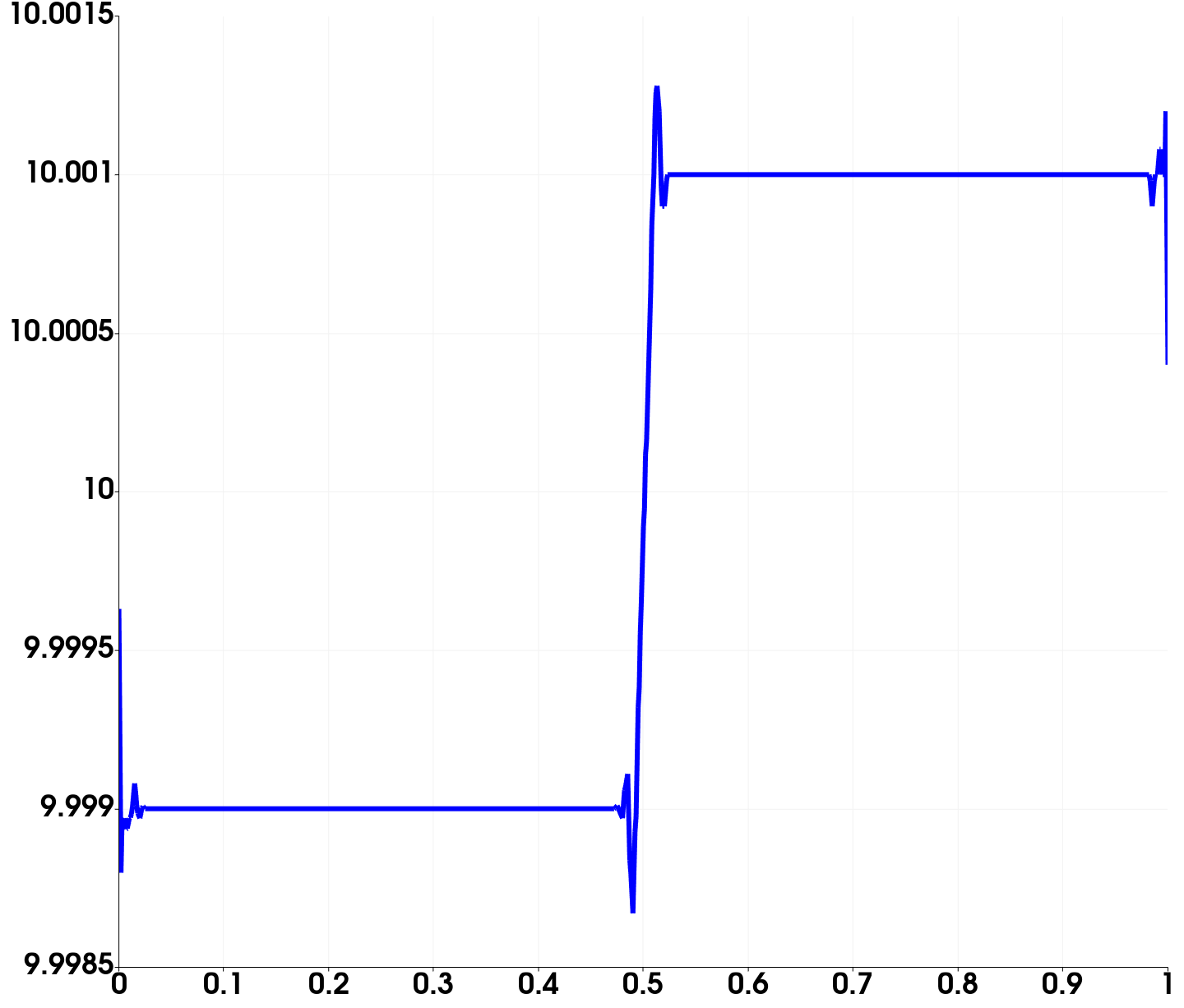}}\\
        \fbox{\includegraphics[width=38mm]{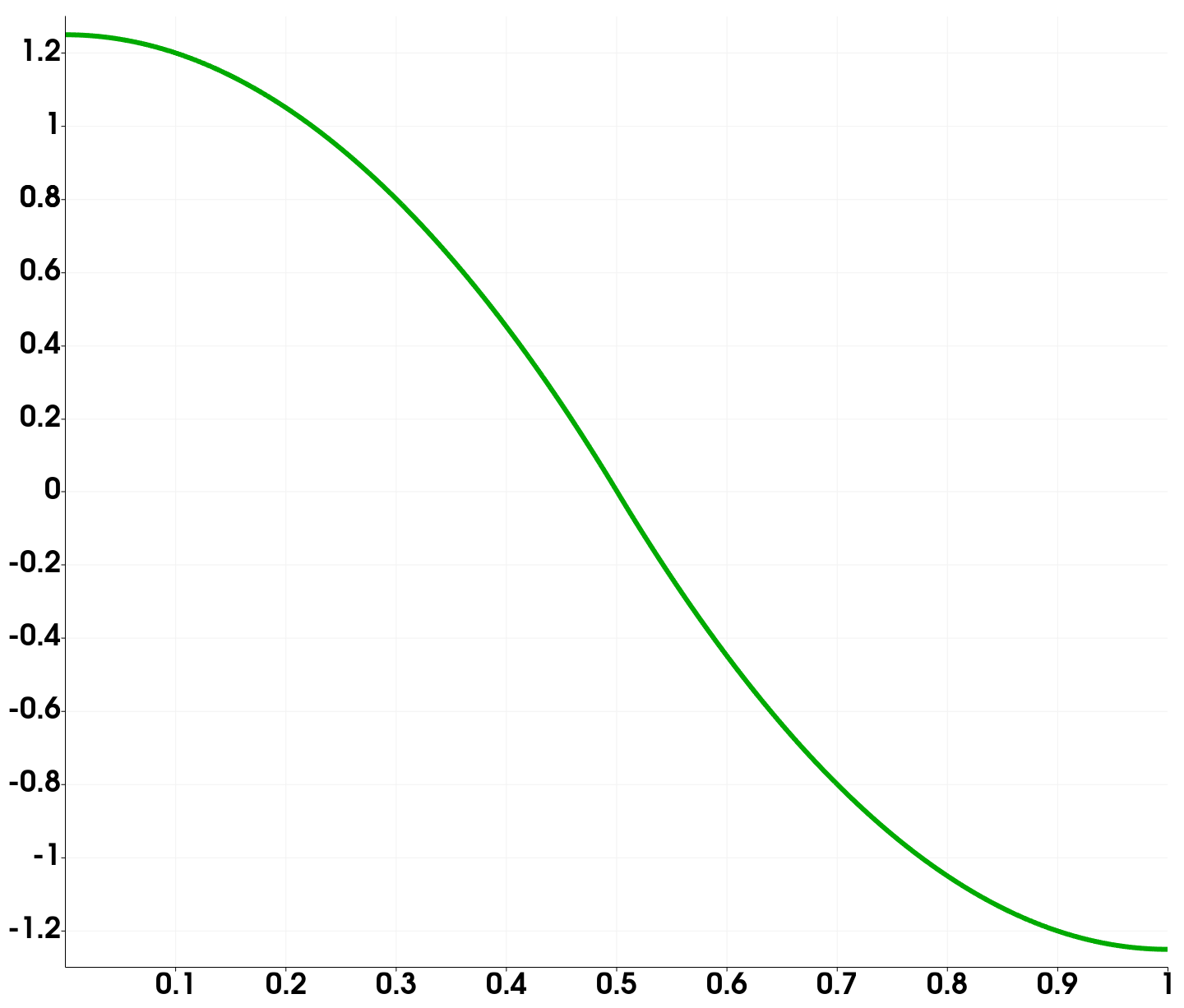}}
      \end{tabular}}
  \end{center}
  \caption{\label{PlasmaOscResolved2}
    Comparison of the effects of different choices of restart. The three
    top figures display the density at the final time $\tf = 5 \tp$, the
    three figures on the bottom show the potential at the final time.
    From left to right we use: (a) no restart, (b) relaxation, and (c)
    full restart. The difference between the full restart and relaxation
    restart is again minimal.}
\end{figure}

We now consider a numerical test case for capturing pure plasma
oscillations. The initial setup is given in Definition
\ref{defi:pure_plasma_oscillation}, and consists of a quasi one-dimensional
plasma with discontinuous density, no velocity, low pressure, and a
positive background charge density that neutralizes the mean value of the
electron charge in the domain. The purpose of prescribing a low pressure is
to ensure that pressure forces $-\nabla p$ are negligible in comparison to
the electric force $-\rho\nabla\Epot$.
\begin{definition}[Plasma column]
  \label{defi:pure_plasma_oscillation}
  Given the following rectangular domain and repulsive coupling constant
  $\alpha$,
  \begin{align*}
    \Omega = [0,1]\times[0,h] \ \ \text{with} \ \ h = 400^{-1},
    \qquad
    \alpha = 10^{4}\,>\,0,
  \end{align*}
  we introduce an initial state $u_0(\xcoord)$ defined by the primitive
  quantities (density, velocity, pressure),
  \begin{align}\label{plasmaOscInit}
    \rho_0 = \begin{cases}
      \overline{\rho} - \delta  & \text{if } x < 0.5,\\
      \overline{\rho} + \delta  & \text{if } x \ge 0.5,
    \end{cases}
    \quad
    \text{with }\overline{\rho} = 10.0 \text{ and } \delta = 0.001,\;
    \bv{v}_0 = \boldsymbol{0},\;
    p_0 = 0.01.
  \end{align}
  We further introduce a constant background charge density (see
  Section~\ref{subse:background_density}) with numerical value $\rho_b = -
  \overline{\rho}$.
\end{definition}
With this setup the approximate value of the plasma frequency $\omega_p =
\sqrt{\rho\,\alpha} \approx \sqrt{\overline{\rho}\alpha} = \sqrt{10^5}$;
therefore the plasma period is $\tp \approx 0.01986$. We consider a final
simulation time of $\tf = 5\,\tp$ and enforce slip boundary conditions on
the momentum, viz., $\mom \cdot \normal = 0$ at the boundary
$\partial\Omega$, as well as homogenenous Neumann boundary conditions on
the potential, $\nabla\Epot \cdot\normal =0$ at the boundary
$\partial\Omega$. Slip boundary conditions ensure that the total mass,
$\int_\Omega\rho(\xcoord,t)\dx$  of the system is conserved. This ensures
that the total charge density remains compatible with the Neumann boundary
conditions,
\begin{align*}
  0
  \;=\;\int_{\partial\Omega}\nabla\Epot\cdot\normal\ds
  \;=\;\int_{\Omega}\Delta\Epot\dx
  \;=\;\int_{\Omega}-\alpha\big(\rho(\xcoord,t) + \rho_b\big)\dx
  \;=\;0.
\end{align*}

We briefly comment on an implementational detail.
\begin{remark}[Rank deficiency due to Neumann boundary conditions]
  The stiffness matrices of the discrete Poisson problem
  \eqref{eq:discrete_gauss} and the source update
  \eqref{ProtoSemiDiscreteEpot} (either with bilinear form
  \eqref{BilinearForm}, or \eqref{BilinearFormQuad}) are rank deficient due
  to our choice of homogeneous Neumann boundary conditions for the
  potential. The kernel of the stiffness matrices is one dimensional and
  contains all constant functions in $\FESpacePot$. In order to deal with
  this defect we use a \emph{mean value filter} $\mathcal{P}$ to eliminate
  all constant modes; a detailed discussion of such filtering techniques
  can be found in \cite{BochevLehoucq}. More precisely, for a given vector
  $\EpotVect=\{\EpotVect_i\}_{i\in\potvertices}$ we set
  \begin{align*}
    (\mathcal{P}\EpotVect)_i &= \EpotVect_i - \mu(\EpotVect),
    \quad
    \text{for }i\in\potvertices,
    \text{ where}
    \quad
    \mu(\EpotVect) = \frac{1}{|\Omega|}
    \sum_{j \in \potvertices} m_j \EpotVect_j.
  \end{align*}
  The mean value filter is now applied to all right hand sides to ensure
  that the right hand side vector is in the column space of the stiffness
  matrix. Moreover, the mean value filter should also be applied to
  intermediate updates after every multiplication with the stiffness matrix
  when solving with a Krylov space method, such as conjugate gradient
  method.
\end{remark}

We note that our choice of source update scheme does not require a
CFL condition; see Sections~\ref{subse:scheme_affine}
and~\ref{subse:non_simplicial}. This implies that the full algorithm is
only subject to a hyperbolic CFL condition; see
Apppendix~\ref{app:hyperbolic}. In fact, using a (relative) CFL number of
$0.75$ we encounter only a very mild hyperbolic CFL constraint resulting in
time step sizes of about $\dt\,\sim\,1.25\,\tp$. While this is an ideal
situation---resolving the plasma frequency is not required---we
nevertheless want to fully resolve plasma oscillations in this case. We
thus choose a very small relative CFL number of $0.005$.

Figure~\ref{PlasmaOscResolved} shows nine temporal snapshots of the density
of a simulation covering a full plasma period (out of the total of five
periods simulated). We used the source update scheme
\eqref{ProtoSemiDiscreteEpot} with bilinear form \eqref{BilinearFormQuad},
and strategy (a), no restart of the Gauß law. Most noticeably the
stationary contact at $x=0.5$ is very well preserved.

Figure~\ref{PlasmaOscResolved2} shows a comparison of the effect of our
three different choices of restart, (a) no restart, (b) relaxation, and
(c) full restart. We note that the difference between the full restart and
relaxation restart are minimal. A slightly more pronounced Gibbs
phenomenon at the stationary contact is visible for both restart strategies
in comparison to strategy (a), no restart.


\subsection{Electrostatic Implosion}
\label{subse:num:implosion}

We now consider an electrostatic implosion configuration in a circular
domain $\Omega$ of radius $r_3 = 16$, that is, $\Omega = \{\xcoord \in
\mathbb{R}^2 \;:\; |\xcoord|_{\ell^2} \leq r_3 \}$, with boundary
conditions $\bv{p}\cdot\normal = 0$ and $\Epot = 0$ on the entirety of
$\partial\Omega$ and the parameters $\alpha=10^{3}$, and $\gamma=5/3$. The
initial state is uniform with density $\rho_0 = 1$, velocity $\bv{v}_0 =
\bzero$, and pressure $p_0 = 10^{-4}$. Given radii $r_2 = 6$ and $r_1 = 4$
we consider a constant background charge density $\rho_{b}$ defined as follow:
\begin{align}
  \rho_{b}(\xcoord)\;=\;
  \begin{cases}
    10000 & \text{if } r_1 < |\xcoord|_{\ell^2}\leq r_2, \\
    0     & \text{otherwise}.
  \end{cases}
\end{align}
The final time is set to $\tf = \tfrac{3}{64} \tp$, where $\tp =
\tfrac{2\pi}{\omega_p}$, and $\omega_p=\sqrt{\rho_0\alpha}=\sqrt{10^3}$.

This geometric setup is similar to considering a configuration with two
concentric cylindrical electrodes, with the outer electrode grounded and
the inner electrode having a very high positive voltage pulling the
electron gas inwards. As the bulk of the electron fluid is accelerated
towards the center a cylindrical outer region with very low density and low
pressure is left behind. Such a configuration with strong implosion and
compression and an emerging near-vacuum region are well-known to be
hydrodynamically highly unstable, see for instance \cite{Sitenko1995,
Drake2010}.
The configuration is an excellent starting point to judge the merits of the
scheme in relationship to its ability to work in the shock hydrodynamics
regime (discontinuous solutions and strong expansions). The configuration
leads to large material velocities which necessitates small time-step
sizes due to the hyperbolic CFL condition \eqref{eq:cfl}. On the other hand
the plasma frequency is moderate. This implies that the smallest time-scale
(that has to be resolved) is dominated by the hydrodynamic subsystem and
not by electrostatic effects. A reference computation with 1M
quadrilaterals visualizing the dynamics is shown in
Figure~\ref{fig:implosion-resolved}. Temporal snapshots of three
different computations with different choices of restart (no restart,
relaxation, full restart) are given in
Figure~\ref{fig:implosion-comparison}. All in all, the qualitative difference 
of all three computations is minimal. All of them seem to capture the dynamics 
accurately and are close to the reference computation 
\ref{fig:implosion-resolved}\,(h).
\begin{figure}[p!]

  \begin{center}
    \setlength\fboxsep{0pt}
    \setlength\fboxrule{0.5pt}
    \subfloat[$t=1/16\,\tf$]{\fbox{\includegraphics[width=40mm]{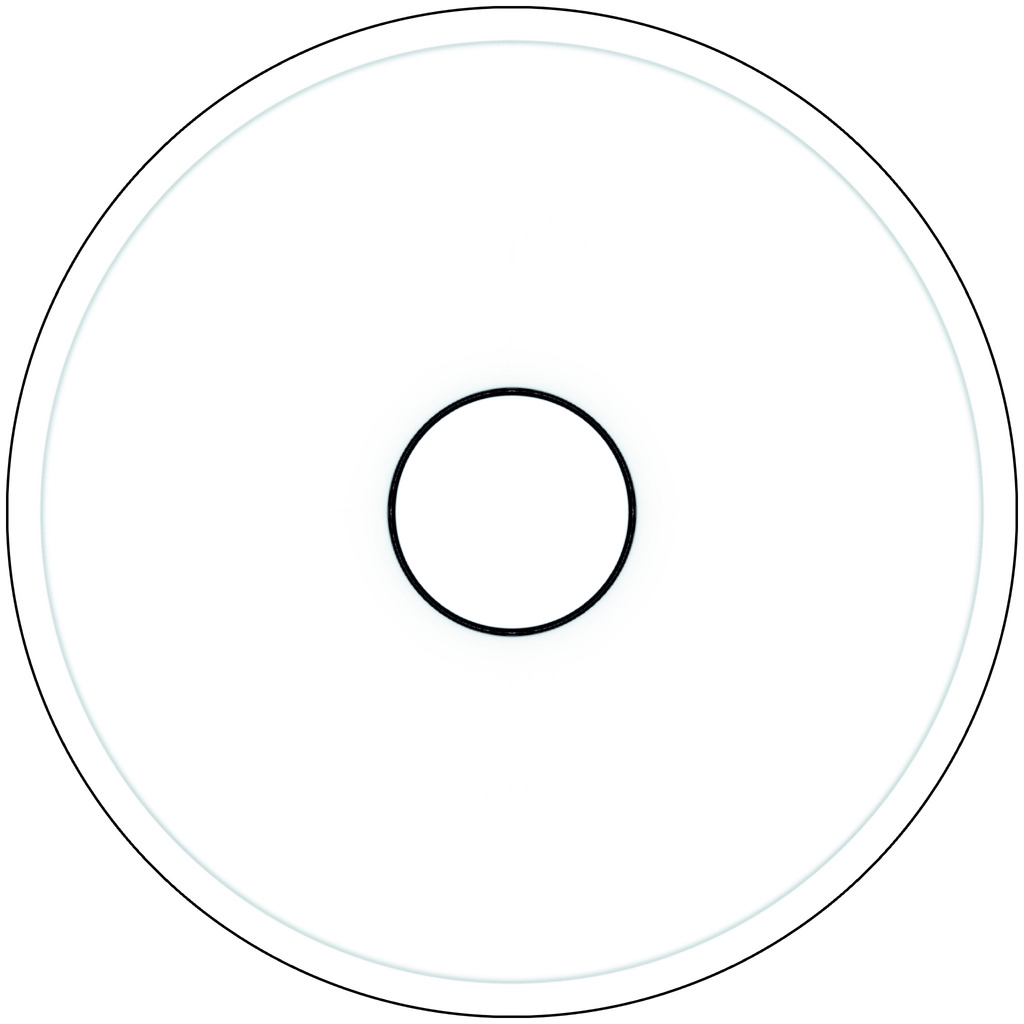}}}\;
    \subfloat[$t=1/8\,\tf$] {\fbox{\includegraphics[width=40mm]{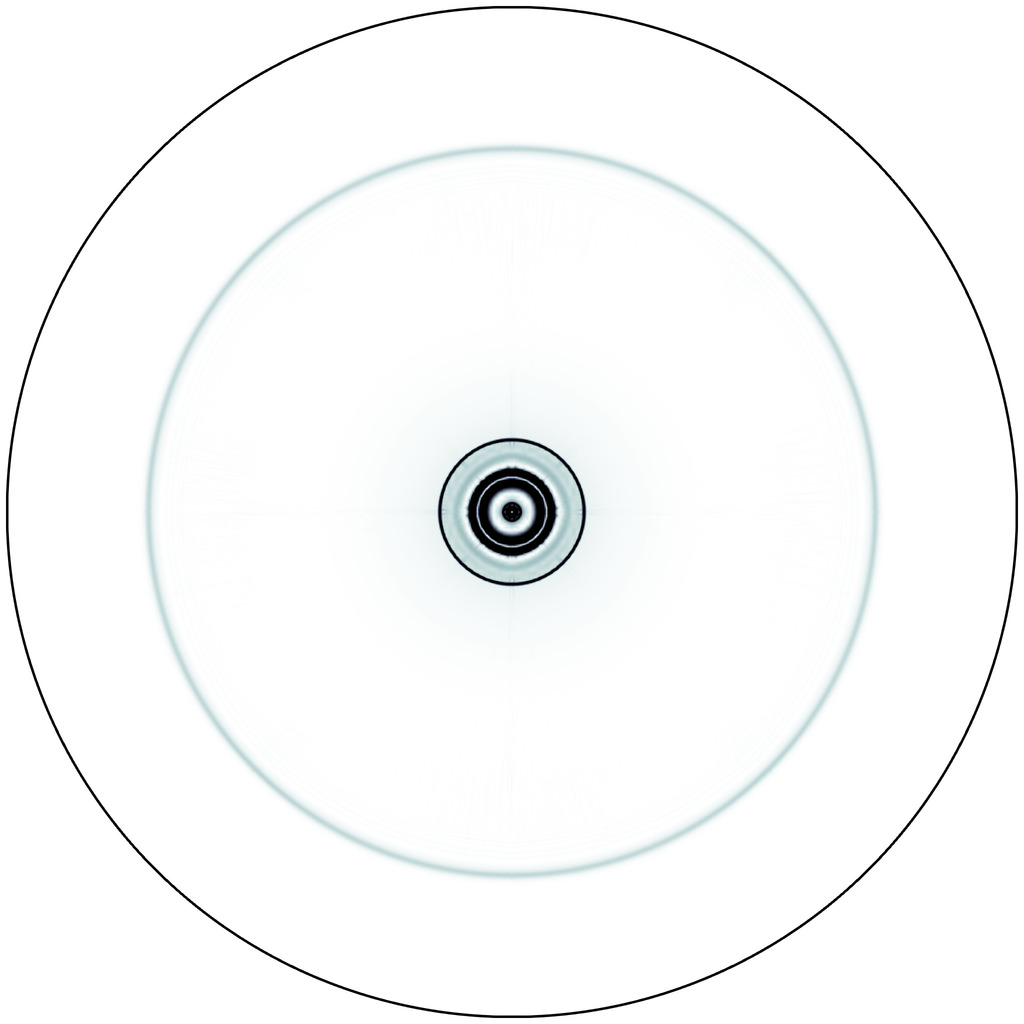}}}\;
    \subfloat[$t=2/8\,\tf$] {\fbox{\includegraphics[width=40mm]{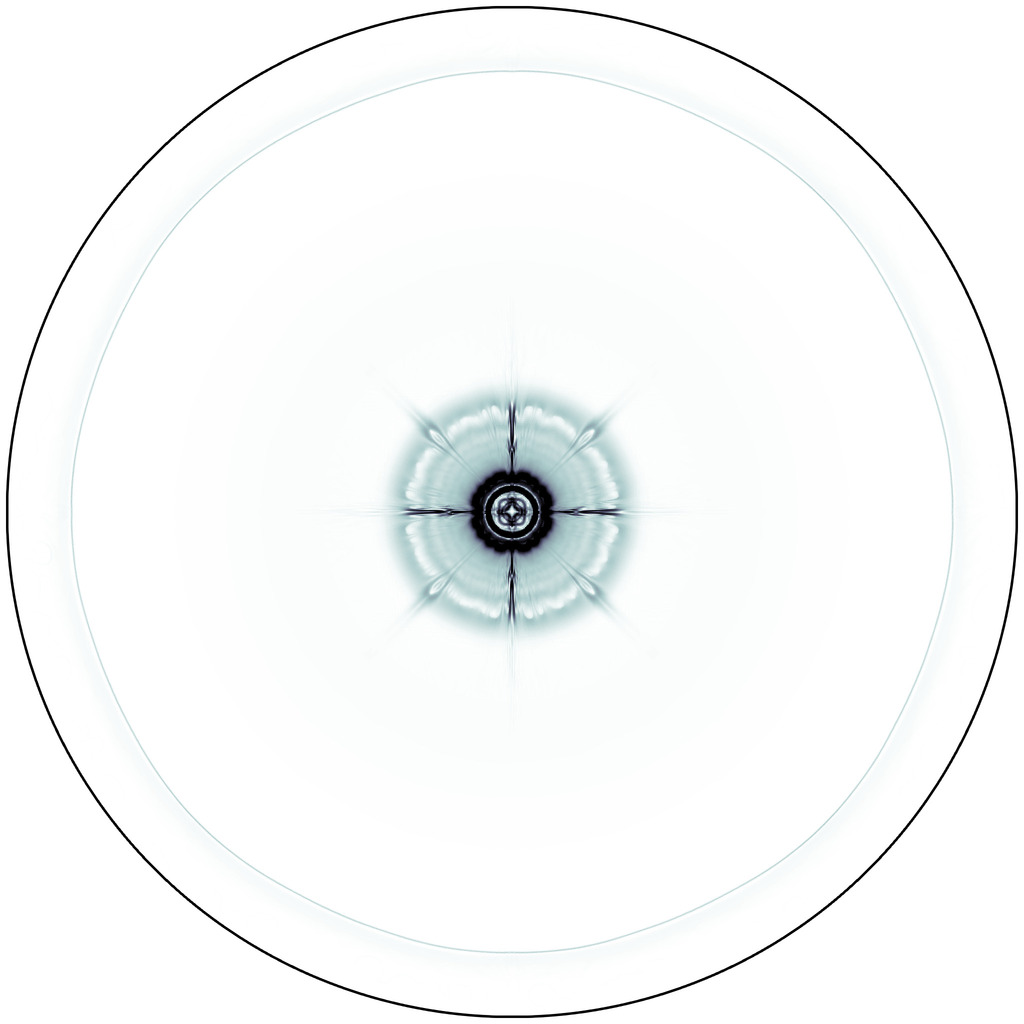}}}
    \vspace{-0.75em}

    \subfloat[$t=3/8\,\tf$] {\fbox{\includegraphics[width=40mm]{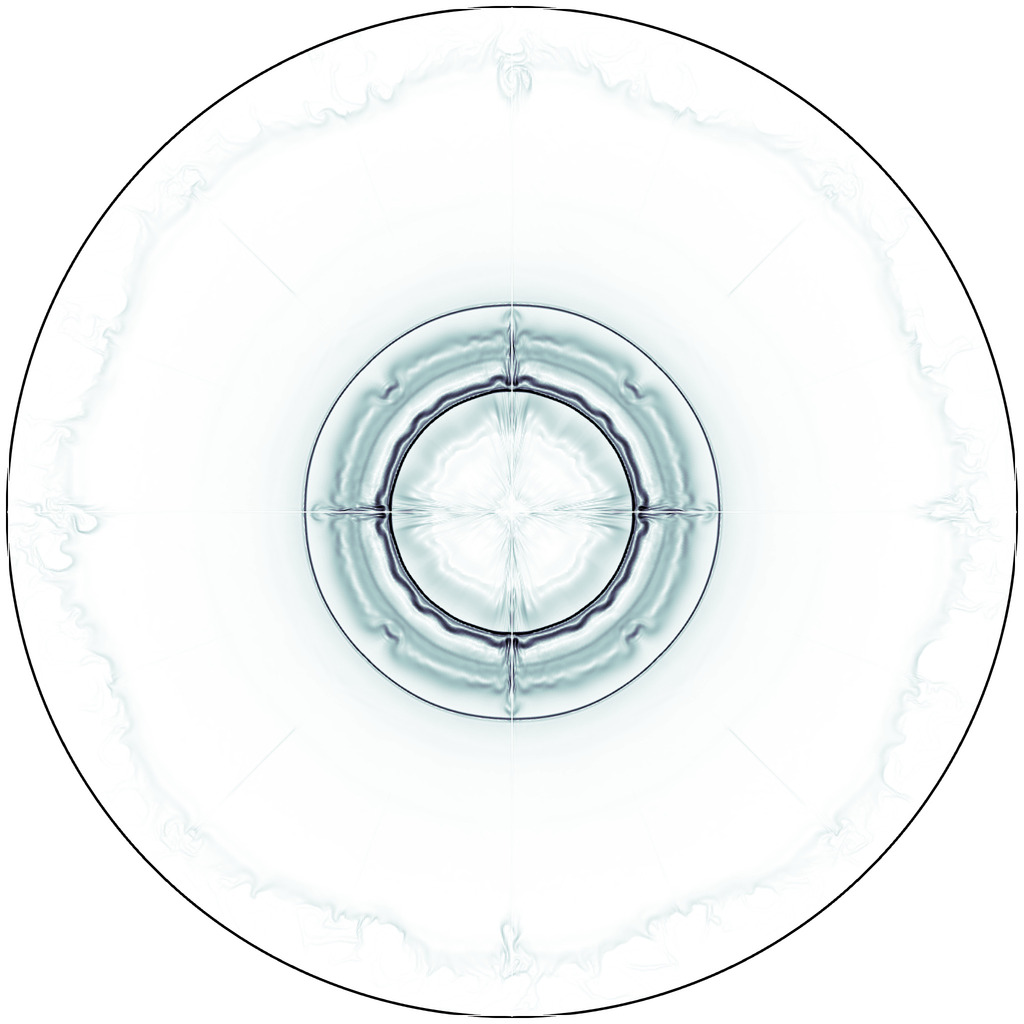}}}\;
    \subfloat[$t=4/8\,\tf$] {\fbox{\includegraphics[width=40mm]{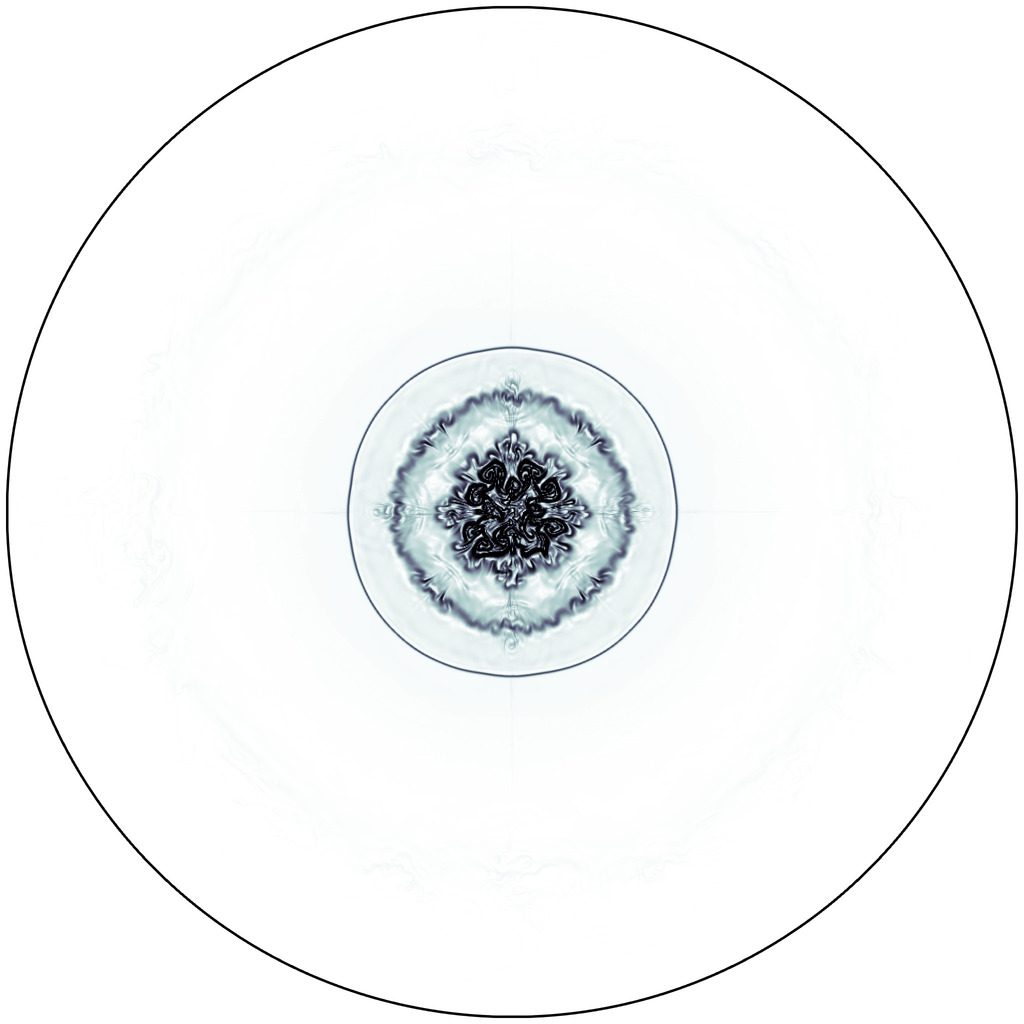}}}\;
    \subfloat[$t=5/8\,\tf$] {\fbox{\includegraphics[width=40mm]{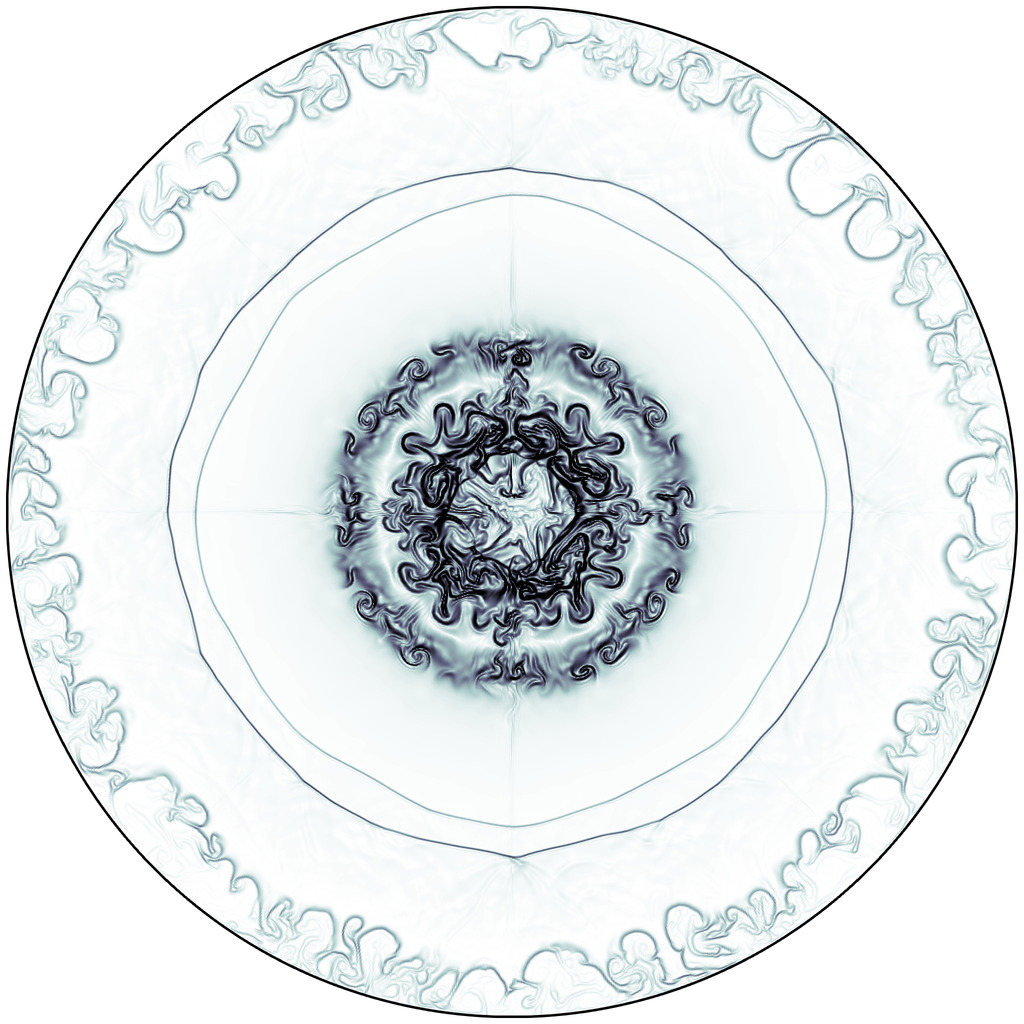}}}
    \vspace{-0.75em}

    \subfloat[$t=6/8\,\tf$] {\fbox{\includegraphics[width=40mm]{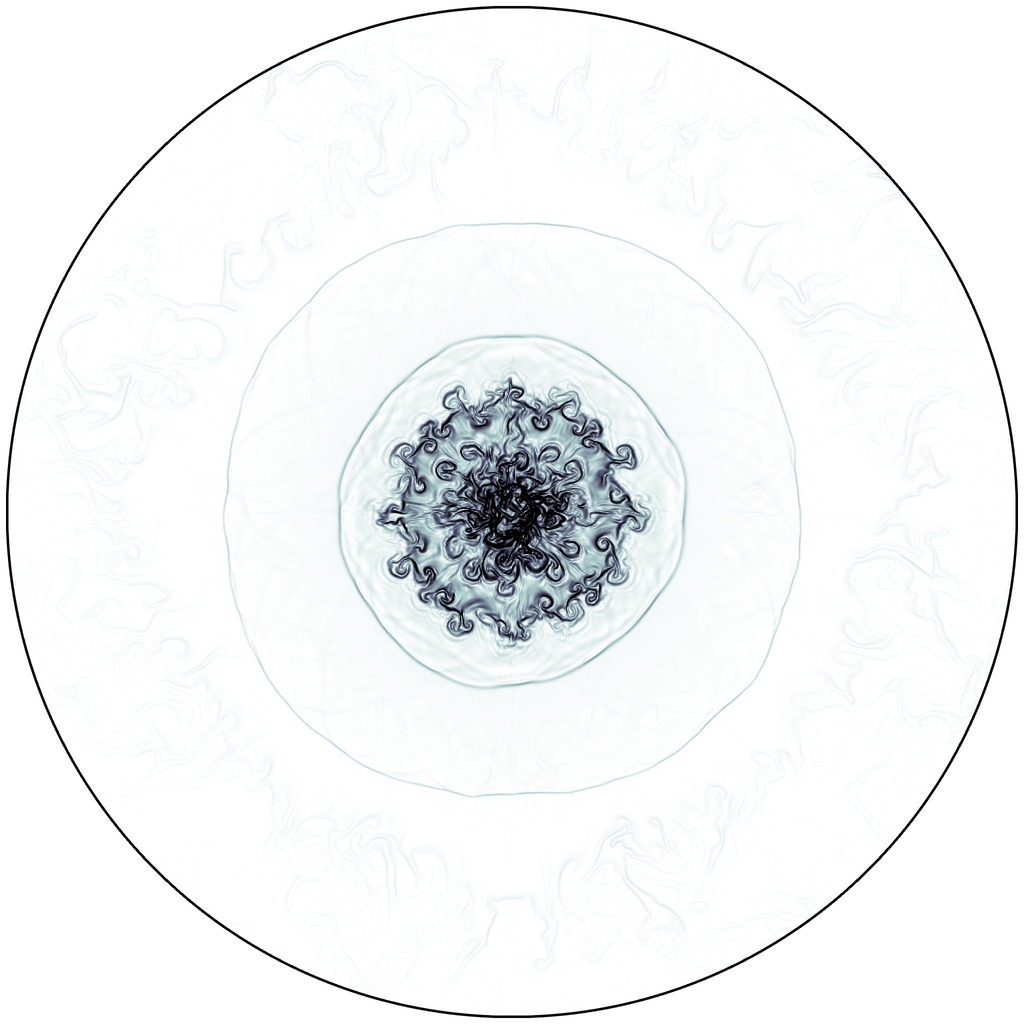}}}\;
    \subfloat[$t=7/8\,\tf$] {\fbox{\includegraphics[width=40mm]{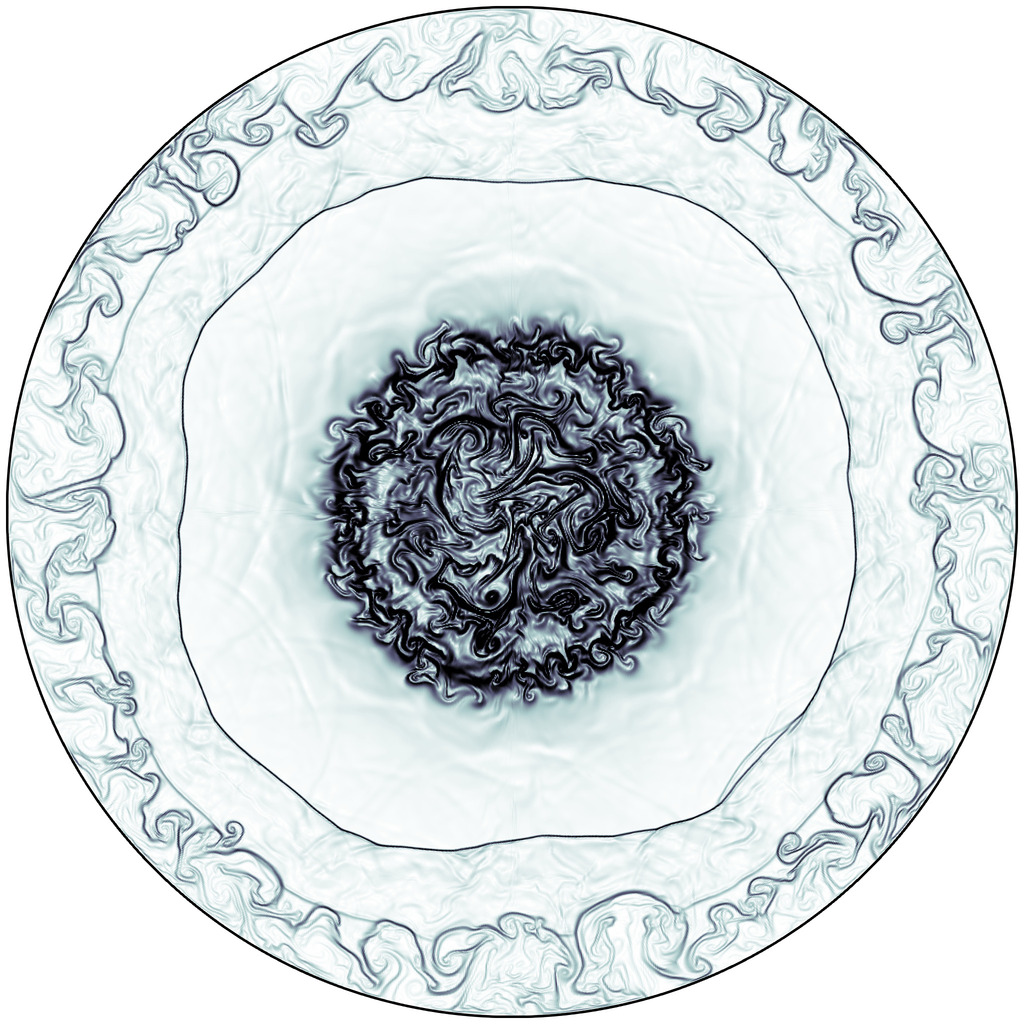}}}\;
    \subfloat[$t=F_f$]      {\fbox{\includegraphics[width=40mm]{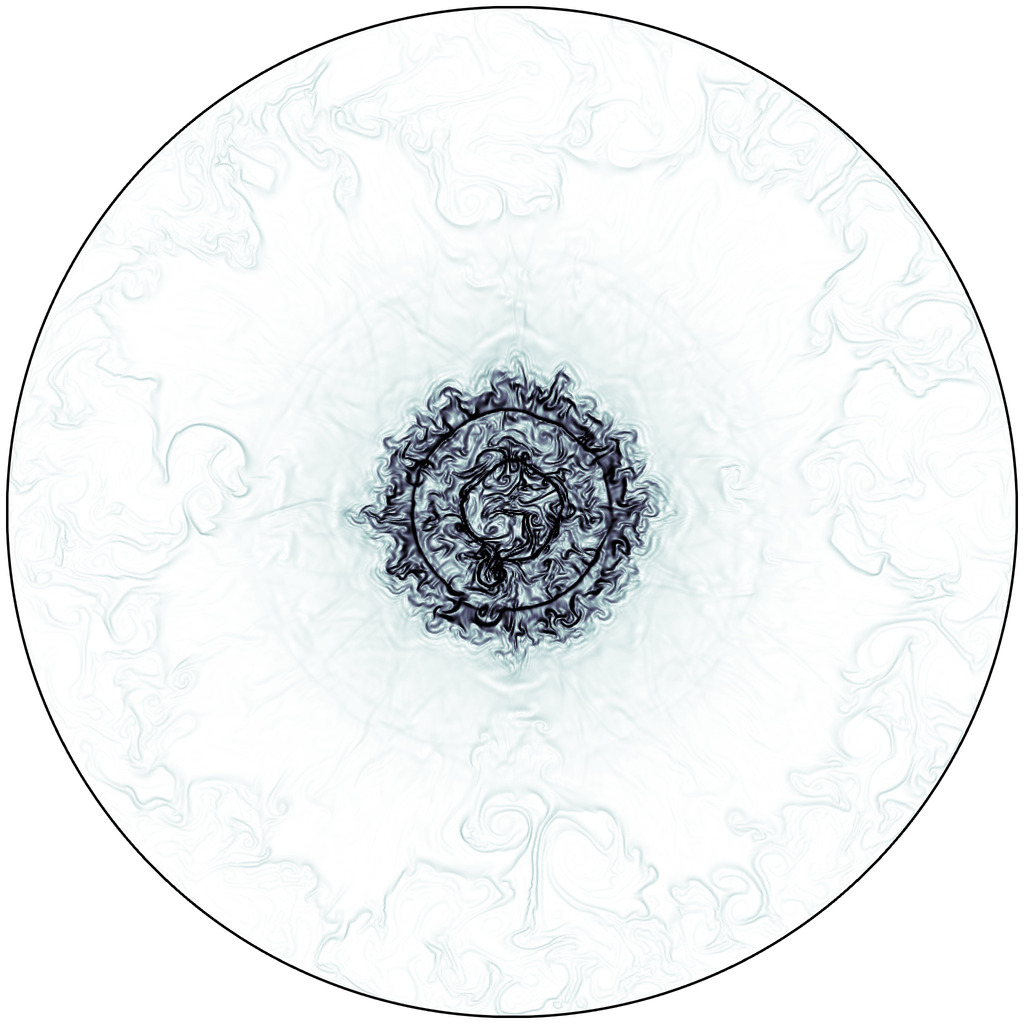}}}
  \end{center}
  \caption{\label{fig:implosion-resolved}%
    Temporal snapshots of a schlieren plot of the density profile of the
    electrostatic implosion test case. Reference computation with full
    restart and approximately 1M quadrilaterals.}

  \begin{center}
    \setlength\fboxsep{0pt}
    \setlength\fboxrule{0.5pt}
    \subfloat[no restart]{\fbox{\includegraphics[width=40mm]{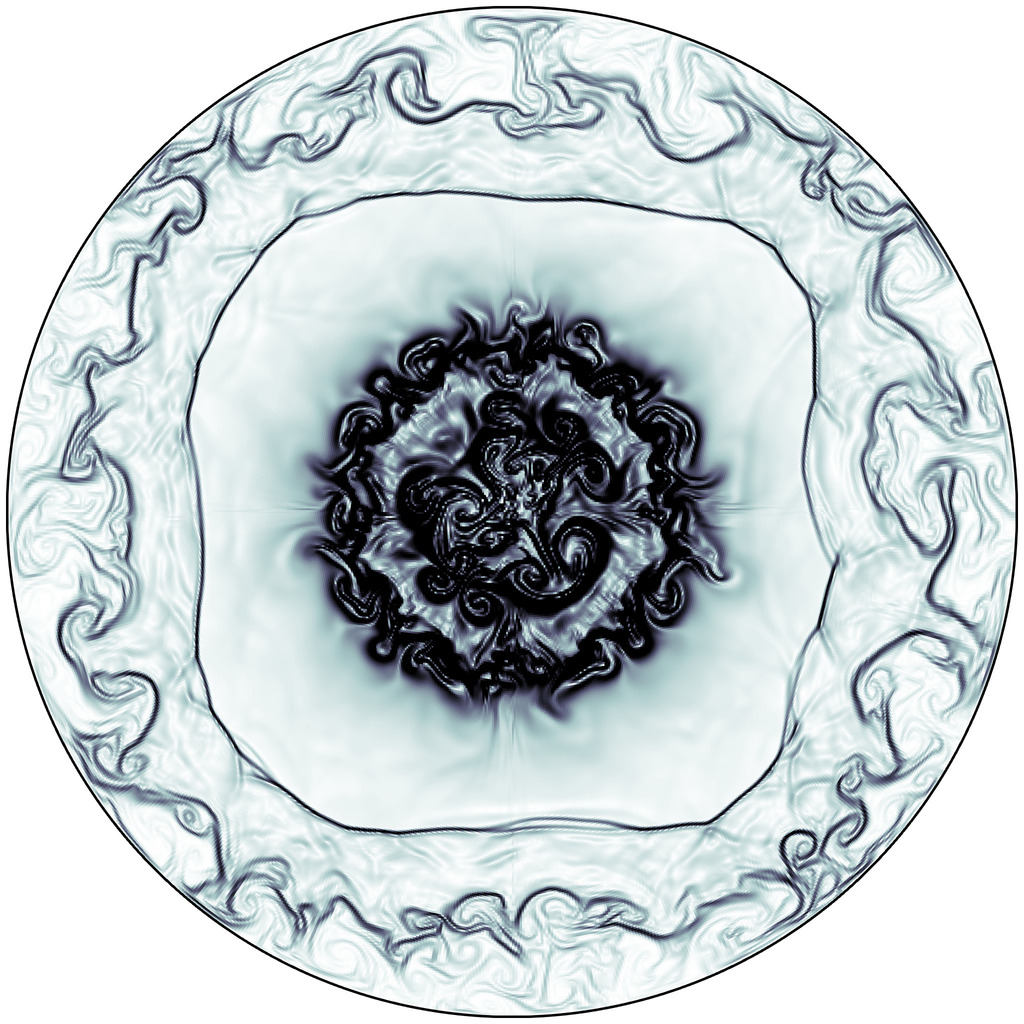}}}\;
    \subfloat[relaxation] {\fbox{\includegraphics[width=40mm]{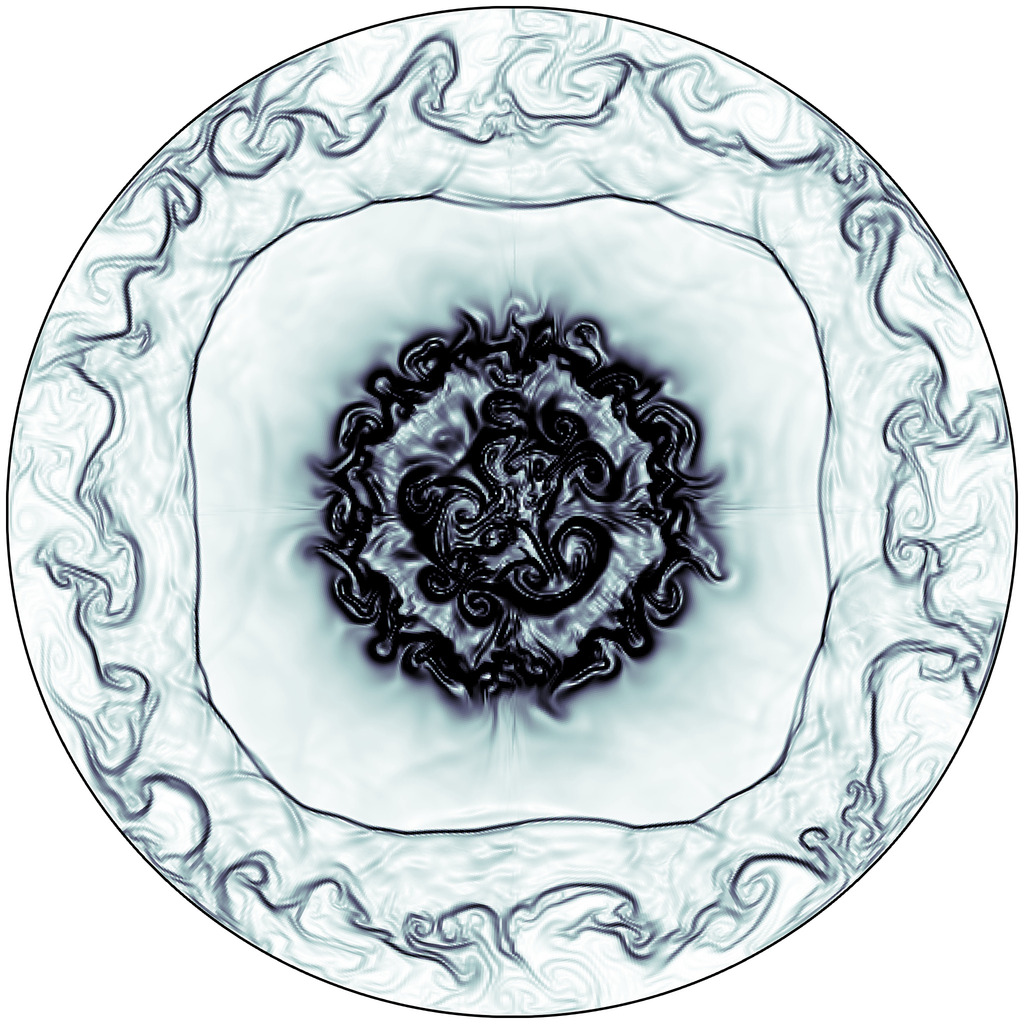}}}\;
    \subfloat[full restart] {\fbox{\includegraphics[width=40mm]{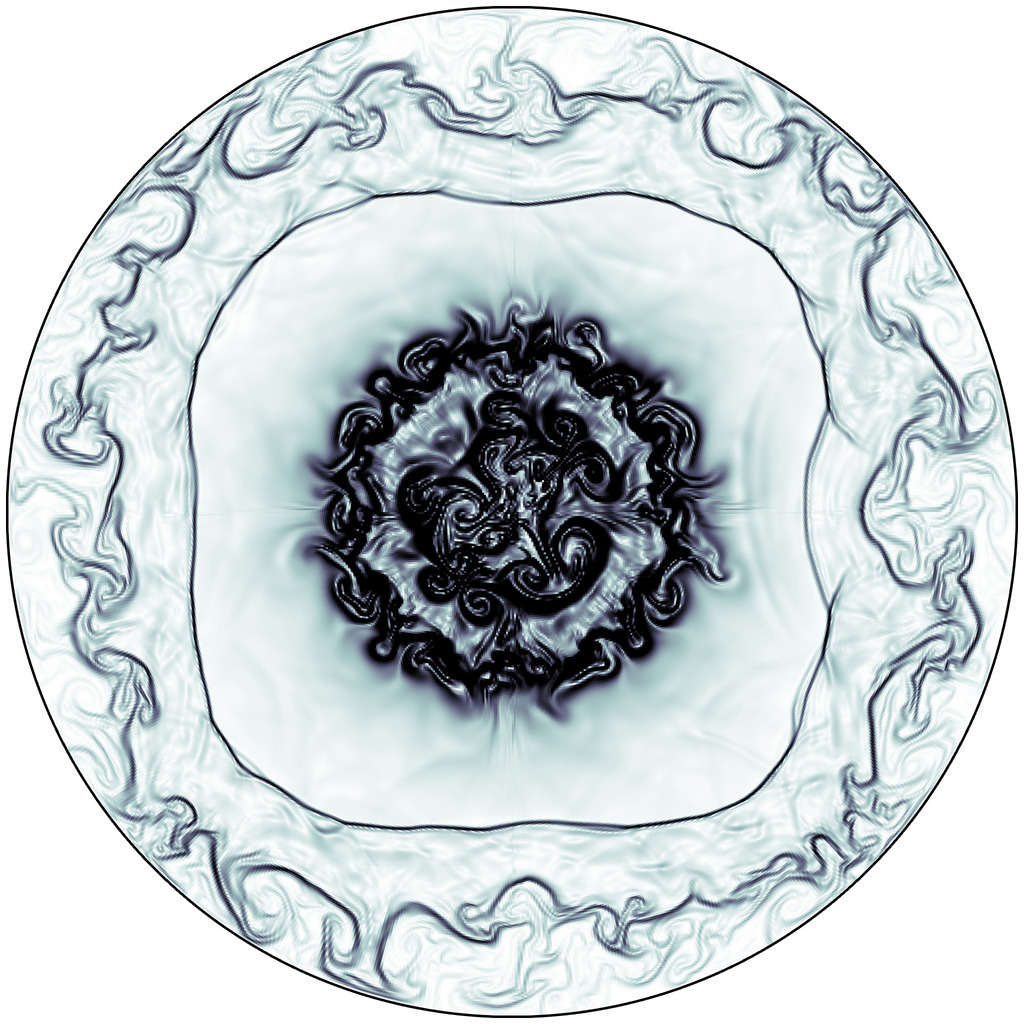}}}
  \end{center}
  \caption{\label{fig:implosion-comparison}%
    Comparison of the effects of different choices of restart on the
    electrostatic implosion test case. The snapshots are taken at $t =
    \tfrac{7}{8}\tf$ and show a schlieren plot of the density profile. From
    left to right we use: (a) no restart, (b) relaxation, and (c) full
    restart.}
\end{figure}
\begin{figure}[t!]
  \centering
  \includegraphics{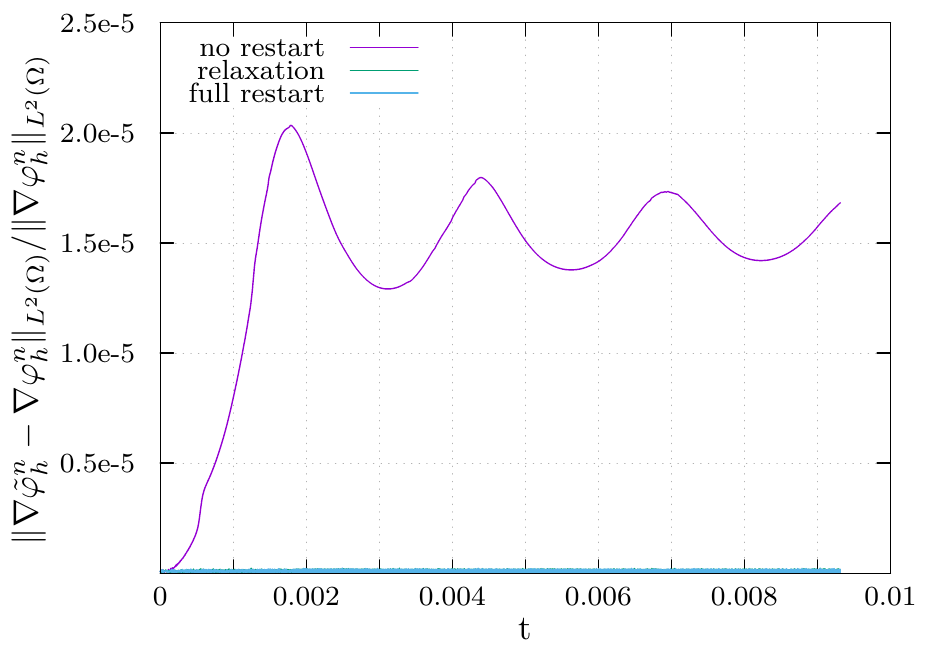}
  \caption{
    \label{fig:implosion-gauss-law}
    Norm of the Gauß law residual \eqref{eq:gauss_law_residual} as a
    function of time for three different choices of restart, (a) no
    restart, (b) relaxation, and (c) full restart. The curves for
    relaxation restart and full restart lie on top of each other.}

  \includegraphics{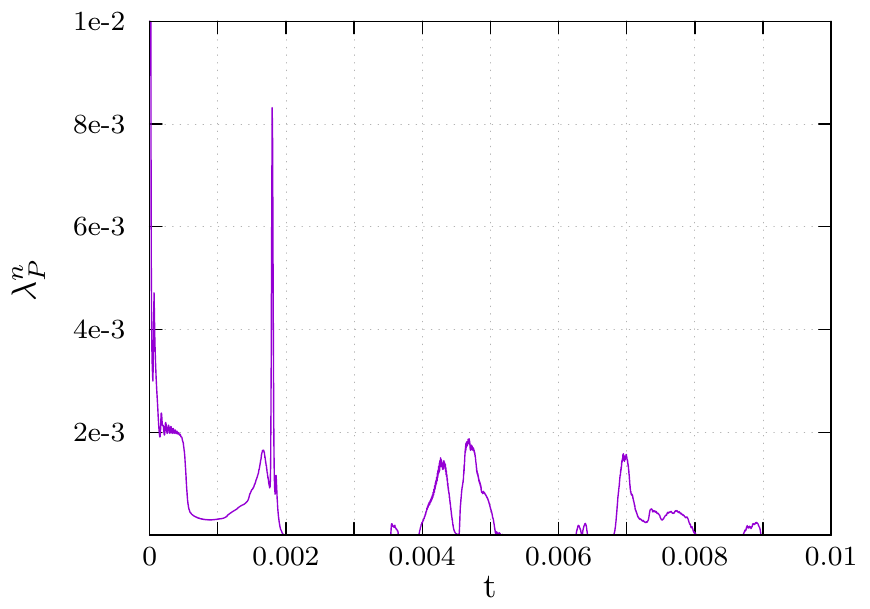}
  \caption{
    \label{fig:implosion-lambda}
    Relaxation parameter $\lambda_P^n$ as a function of time for the
    relaxation strategy. The scale had been clipped at 1.0e-2. The maximum
    $\lambda_P^N$ occurs during the first step and has a magnitude of
    around 2.0e-1.}
\end{figure}
As an additional figure of merit we report the Gauß-law residual
\eqref{eq:gauss_law_residual} as a function of time for all three choices
of restart in Figure~\ref{fig:implosion-gauss-law}. We highlight that the
Gauß-law residual for the no-restart strategy accumulates a maximal
relative deviation of around $\text{2e-5}$ after about about 47,000
time-steps. Bearing in mind that this is a complex simulation with a
non-smooth solution this is an excellent result. The residual for the other
two choices of restart remains in the $\text{1e-8}$ range and are not
distinguishable in Figure~\ref{fig:implosion-gauss-law}.

An important aspect of the relaxation technique is the prospect that it can
maintain the (discrete) Gauß-law exactly with only a minimal correction to
the kinetic energy. Figure~\ref{fig:implosion-lambda} indeed corroborates
this observation: The numerical value of the relaxation parameter
$\lambda^n_P$ is almost always well below 2e-3 throughout the computation.


\section{Conclusion and outlook}
\label{sec:conclusion}

In this paper we have discussed a fully discrete numerical scheme for the
Euler-Poisson system that preserves the invariant domain of the Euler
subsystem, as well as energy stability properties. We have demonstrated
that the Gauss law can also be preserved at the same time with a simple
postprocessing technique that introduces a mild amount of artificial
relaxation to the system.

In order to satisfy energy stability as well as well-posedness of the
linear algebra system associated to each time step, the source update scheme 
requires special attention in regards to the choice of finite
element spaces and quadrature rules. In particular, we have made extensive
use of lumped quadratures. This presents no obstacle for the case of
simplicial affinely mapped elements. However, it is a delicate issue in the
context of quadrilateral/hexahedral meshes, where it is well known that
element distortion can degrade second-order accuracy. We have shown that
our proposed scheme exhibits second-order accuracy in the context of
asymptotically affine mesh sequences (of quadrilaterals) which are the
family of nested meshes most frequently used in practice.

In a sequence of numerical experiments we have verified and highlighted
qualitative and quantitative properties of the scheme. In addition to the
grid convergence results used to understand the effects of mesh distortion,
we have also demonstrated robustness of our scheme when simulating pure
plasma oscillation with minimal \emph{gas} effects, as well as an
electrostatic implosion test highlighting our capabilities in the
shock-hydrodynamics regime.

The results presented in this paper are first building block in a larger
effort of targeting the development of numerical methods for the full
Euler-Maxwell system. A future publication will discuss the application of
our scheme to systems incorporating magnetic field effects. A crucial
property for us to demonstrate in this context will be the ability to
overstep the plasma or cyclotron frequency similarly as we discussed in the
present manuscript.


\section*{Acknowledgements}
M.M. acknowledges partial support from the National Science Foundation
grants DMS-1912847 and DMS-2045636. The work of I.T. and and J.S. was
partially supported by LDRD-express project \#223796 and LDRD-CIS project
\#226834 from Sandia National Laboratories, and by the U.S. Department of
Energy, Office of Science, Office of Advanced Scientific Computing
Research, Applied Mathematics Program and by the U.S. Department of Energy,
Office of Science, Office of Advanced Scientific Computing Research and
Office of Fusion Energy Sciences, Scientific Discovery through Advanced
Computing (SciDAC) program. The authors acknowledge the Texas Advanced
Computing Center (TACC) at The University of Texas at Austin for providing
HPC resources that have contributed to the research results reported within
this paper; \url{http://www.tacc.utexas.edu}.


\appendix
\section{Graph-based hyperbolic solver}
\label{app:hyperbolic}
For the hyperbolic subsystem we use a framework of numerical schemes based
on a graph-viscosity stabilization and convex limiting \cite{GuePo2016,
Guermond2016, Guer2018, GuePo2019, Maier2021}. The framework is
discretization agnostic, meaning that it can in principle be used in
conjunction with continuous or discontinuous finite element, finite volume,
or finite difference formulations. In this paper, however, we use a
discontinuous finite element ansatz for reasons discussed in
Section~\ref{sec:numerical_approach} regarding energy stability. For the
sake of completeness and reproducibility we summarize some implementational
aspects in this appendix, for a complete overview of the methodology we
refer the reader to \cite{GuePo2019}. Numerical methods, that can preserve 
convex constraints of the solution (e.g. positivity properties), are also 
advanced in \cite{wu2021minimum, wu2021geometric, Zhang2010} and references 
therein.

\subsection{Discrete divergence operator and stencil}

For every element $\element \in \triangulation$, we define the following
set of indices
\begin{align*}
  \mathcal{I}(\element) &:= \big\{ i \in \hypvertices \ | \
  \HypBasisScal_{i|\element} \not \equiv 0 \big\} \,,
\end{align*}
That is, $\mathcal{I}(\element)$ is the set of all shape functions with support
on the interior of the element $\element$. For every $i \in
\mathcal{I}(\element)$ and every $j \in \hypvertices$ we define the vector
$\bv{c}_{ij} \in \mathbb{R}^{d}$ as
\begin{equation*}
  \bv{c}_{ij} :=
  \begin{cases}
    \bv{c}_{ij}^\element-\bv{c}_{ij}^{\partial\element}\;
    &\text{if }j \in \mathcal{I}(\element), \\
    \bv{c}_{ij}^{\partial\element}
    &\text{if }j \in \mathcal{V} \backslash \mathcal{I}(\element),
  \end{cases}
\end{equation*}
\vspace{-1em}
where
\vspace{-1em}
\begin{equation*}
  \bv{c}_{ij}^{\element} := \int_{\element}\HypBasisScal_i \nabla\HypBasisScal_j
\dx,
  \qquad
  \bv{c}_{ij}^{\partial\element} :=
  \frac{1}{2} \int_{\partial K \backslash \partial\Omega} \HypBasisScal_{j}
  \HypBasisScal_i \normal_\element \ds.
\end{equation*}
where $\normal_\element$ is the outwards pointing normal of the element
$\element$. Note that: $\bv{c}_{ij}^{\partial\element}$ is necessarily zero
if $\HypBasisScal_j$ does not have support on the element $\element$ or on
one of its immediate neighbors. With this observation in mind, we define
the stencil at the node $i$ as follows:
%
\begin{align*}
  \mathcal{I}(i) = \{j \in \hypvertices \;|\; \bv{c}_{ij} \not = 0 \}.
\end{align*}
The set of vectors $\{\bv{c}_{ij}\}_{j \in \mathcal{I}(i)}$ is used to
construct an approximation of the divergence operator at each node $i$ in
the spirit of a collocation scheme \cite{GuePo2019}. We highlight that this
approximation of the divergence operator is consistent with the polynomial
degree of the shape functions $\{\HypBasisScal_i\}_{i \in \hypvertices}$
and works with arbitrary meshes, see \cite{GuePo2019} for more
details.

\subsection{Scheme}

For a given state $\bv{U}_i^{n}$ we define a low-order update
$\bv{U}_i^{n+1,\mathrm{L}}$ approximating the solution of
\eqref{EulerWithSources} as follows:
\begin{align}
  \label{LowOrderScheme}
  m_i \frac{\bv{U}_i^{n+1,\mathrm{L}} - \bv{U}_i^{n}}{\dt}
  + \sum_{j \in \mathcal{I}(i)} \mathbb{f}(\bv{U}_j^{n})\,\bv{c}_{ij}
  - d_{ij}^{n,\mathrm{L}} (\bv{U}_j^{n} - \bv{U}_i^{n}) = 0,
  \text{for all }i\in \hypvertices,
\end{align}
where we have set $m_i = \int_{\Omega} \HypBasisScal_i \dx$, and where
$\mathbb{f}(\bv{U}_j^{n}) \in \mathbb{R}^{(d+2) \times d}$ is the flux at
the node $j \in \mathcal{I}(i)$, and $d_{ij}^{n,\mathrm{L}} \in
\mathbb{R}^+$ is a viscosity coefficient defined as
\begin{align*}
  d_{ij}^{n,\mathrm{L}} := \max
  \{\lambda^{\text{max}}(\bv{U}_i^{n},\bv{U}_j^{n},\bv{n}_{ij})
  |\bv{c}_{ij}|_{\ell^2},
  \lambda^{\text{max}}(\bv{U}_j^{n},\bv{U}_i^{n},\bv{n}_{ji})
  |\bv{c}_{ji}|_{\ell^2}\}
\end{align*}
Here, $\lambda^{\text{max}}(\bv{U}_L,\bv{V}_R,\bv{n})$ is any upper-bound
on the maximum wavespeed of propagation of the projected-Riemann problem
(setting $x := \xcoord \cdot \normal$):
\begin{align*}
  \partial_{t}\bv{U} + \partial_x (\flux(\bv{U}) \cdot \normal) = 0
  \quad\text{with initial data}\quad
  \bv{U}_0 =
  \begin{cases}
    \bv{U}_{L} &\text{if } x \leq 0, \\
    \bv{U}_{R} &\text{if } x > 0, \\
  \end{cases}
\end{align*}
and we set $d_{ii}^{n,\mathrm{L}}= - \sum_{j\in\mathcal{I}{(i)}\backslash \{i\}}
d_{ij}^{n,\mathrm{L}}$.
Then under the \emph{hyperbolic CFL} condition
\begin{align}
  \label{eq:cfl}
  \dt_n := -\text{c}_{\text{cfl}}  \min_{i \in \hypvertices}
  \tfrac{m_i}{2 d_{ii}^{n,\mathrm{L}}}
\end{align}
the update $\bv{U}_i^{n+1,\mathrm{L}}$ as defined by \eqref{LowOrderScheme}
maintains the invariant domain $\mathcal{A}$ \cite{GuePo2016, Guermond2016,
Guer2018, GuePo2019}, viz.,
\begin{equation}
  \bv{U}_i^{n+1,\mathrm{L}}\;\in\;
  \mathcal{A}\;:=\;\big\{\state =(\den,\mom, \totme)
  \text{ such that } \rho>0, \ e>0,\ s(e,\rho)\ge s_{\min}\big\}.
\end{equation}

\paragraph{High-order update and convex limiting}
We also introduce a corresponding high-order method,
\begin{align*}
  m_i \frac{\bv{U}_i^{n+1,\mathrm{H}} - \bv{U}_i^{n}}{\dt_n}
  + \sum_{j \in \mathcal{I}(i)} \mathbb{f}(\bv{U}_j^{n})\,\bv{c}_{ij}
  - d_{ij}^{n,\mathrm{H}} (\bv{U}_j^{n} - \bv{U}_i^{n}) = 0,
\end{align*}
where the only difference with the low-order scheme \eqref{LowOrderScheme}
lies in the choice of a high-order viscosity $d_{ij}^{n,\mathrm{H}}$. The
high-order graph viscosities are typically constructed such that
$d_{ij}^{n,\mathrm{H}} \approx d_{ij}^{n,\mathrm{L}}$ near shocks and
discontinuities, but $d_{ij}^{n,\mathrm{H}} \approx 0$ in smooth regions of
the solution. A possible choice is to construct local indicators estimating
the entropy production or local smoothness of the solution and use those to
construct the high-order viscosity \cite{Guer2017,Guer2018}.
In the computations reported in this manuscript, however, we use a
very simple approach by setting
\begin{align*}
  d_{ij}^{n,\mathrm{H}} =
  \begin{cases}
    d_{ij}^{n,\mathrm{L}} &\text{if } \xcoord_i = \xcoord_j, \\
    0 &\text{otherwise}.
  \end{cases}
\end{align*}
This definition is equivalent to using the low-order viscosity only on the
faces of the elements. We observe numerically good convergence rates for
$\mathbb{P}^1$ and $\mathbb{Q}^1$ elements.

The high-order solution $\bv{U}_i^{n+1,\mathrm{H}}$ it is not
invariant domain preserving and can consequently not be used directly
\cite{Guer2018}. In order to maintain invariant domain preservation and the
high approximation order we blend the low-order solution and high-order
solution together in a postprocessing step by setting
\begin{align*}
  \bv{U}_i^{n+1} = \bv{U}_i^{n+1,\mathrm{L}}
  + \sum_{j \in \mathcal{I}(i)} \ell_{ij} \bv{A}_{ij}
  \quad\text{with}\quad
  \bv{A}_{ij} := \dt_n (d_{ij}^{n,\mathrm{H}} - d_{ij}^{n,\mathrm{L}})
  (\bv{U}_j^{n} -\bv{U}_i^{n}).
\end{align*}
Here, the limiter matrix $\ell_{ij}\in[0,1]$ is computed with a convex
limiter consisting of directional line searches that ensures that
$\bv{U}_i^{n+1}\in\mathcal{A}$ \cite{Guer2018,GuePo2019}.

\begin{remark}[Time-step size constraint]
  We note that the time-step constraint \eqref{eq:cfl} depends on the low
  order viscosities $\{ d_{ij}^{n,\mathrm{L}} \}_{j \in
  \mathcal{I}(i)\backslash \{i\}}$. The time-step size constraint
  \eqref{eq:cfl} is sufficient for the first-order scheme, but it is overly
  pessimistic when applied to a high-order method. It is possible to use
  sharper time-step size constraint estimates for high order
  methods/viscosities. However, for the sake of simplicity, and in order to
  avoid digression, in this manuscript we use \eqref{eq:cfl}, computed with
  the low-order viscosities, even when we are actually using a high order
  method.
\end{remark}


\subsection{High-order time stepping}

The scheme introduced so far is high order in space and first order in
time. In order to recover higher order convergence in time the basic
forward Euler step is now used as a basic building block in a high-order
SSP Runge-Kutta method \cite{Shu1988}. For implementational details we
refer the reader to \cite{Guer2021,Guer2022,Maier2021}.

\section{Partial restart of the potential with line search}
\label{app:line_search}

An alternative approach to the artificial relaxation discussed in
Section~\ref{subse:relax} is to perform a line search blending
$\Epot_h^{n+1}$ computed with the update procedure
(Algorithm~\ref{alg:yanenko}, or \ref{alg:strang} respectively) and
$\tilde\Epot_h^{n+1}$ computed by solving \eqref{eq:discrete_gauss} such
that the final update is as close to the restarted potential
$\tilde\Epot_h^{n+1}$ as possible while maintaining the energy inequality.
To this end we introduce a linear combination
\begin{align}
  \Epot^{n+1,\lambda}_{h}\in\FESpacePot,
  \qquad
  \EpotVect_i^{n+1,\lambda}\;=\;
  (1-\lambda_i)\EpotVect_i^{n+1}
  \;+\;
  \lambda_i
  \tilde\EpotVect_i^{n+1},
  \label{eq:line_search_potential}
\end{align}
where $\lambda_i\in[0,1]$, $i\in\potvertices$, is determined by an
optimization  problem,
\begin{align}
  \min_{\lambda \in [0,1]} \;
  \|\varphi_h^{n+1,\lambda}-\tilde\varphi_h^{n+1}\|_{\bv \Ltwo}^2
  \;\text{subject to}\;
  \big\|\nabla\Epot_h^{n+1,\lambda}\big\|_{\bvLtwo}^2 \le
  \big\|\nabla\Epot_h^{n+1}\big\|_{\bvLtwo}^2.
  \label{eq:line_search_opt_prob}
\end{align}
This ensures that setting $\varphi_h^{n+1} \leftarrow
\varphi_h^{n+1,\lambda}$ maintains the total energy balance
\eqref{TotalEnergyBalance} while closing the difference between
$\varphi_h^{n+1,\lambda}$ and the discrete Gauß law abiding
$\tilde\varphi_h^{n+1}$ as much as possible. In light of the heuristic
error estimate established in Lemma~\ref{lem:a_new_hope} the hope arises
that $\lambda_i$ can generally be chosen very close to $1$.

The proposed optimization problem can be modified in a number of ways. For
example, the computational cost of the optimization problem can be reduced
significantly by introducing a convenient lumping and allow for possible
overrelaxation. Suppose the inequality constraint in
\eqref{eq:line_search_opt_prob} is active. The Lagrange conditions then
read
\begin{multline*}
  \sum_{j\in\mathcal{I}(i)}
  \big(\tilde\EpotVect_i^{n+1} - \EpotVect_i^{n+1}\big)
  \,\big(m_{ij}+\mu\mathcal{K}_{ij}\big)\,
  \big(\tilde\EpotVect_j^{n+1} - \EpotVect_j^{n+1}\big)
  \,(1-\lambda_j)\;=\;
  \\
  \sum_{j\in\mathcal{I}(i)}
  \big(\tilde\EpotVect_i^{n+1} - \EpotVect_i^{n+1}\big)
  \,\big(\mu\mathcal{K}_{ij}\big)\, \tilde\EpotVect_j^{n+1},
\end{multline*}
where $\mathcal{K}$ is the stiffness matrix as introduced in
Section~\ref{subse:notation} and $\mu$ is a Lagrange multiplier. Lumping
the matrices on the left-hand side then leads to an algebraic condition of
the form:
\begin{equation*}
  \big(\tilde\EpotVect_i^{n+1} - \EpotVect_i^{n+1}\big)^2
  \,m_i\,(1-\lambda_i)\;=\;
  \\
  \sum_{j\in\mathcal{I}(i)}
  \big(\tilde\EpotVect_i^{n+1} - \EpotVect_i^{n+1}\big)
  \,\big(\mu\mathcal{K}_{ij}\big)\, \tilde\EpotVect_j^{n+1},
\end{equation*}
This can be efficiently solved by setting
\begin{gather*}
  \tilde\lambda_i\;:=\;
  \frac{\sum_{j\in\mathcal{I}{(i)}}\mathcal{K}_{ij}\tilde\EpotVect_j^{n+1}}
  {m_i\,\big(\tilde\EpotVect_i^{n+1} - \EpotVect_i^{n+1}\big)},
  \quad
  \mu \;=\; \frac
  {\sum_{i,j} 2\,\mathcal{K}_{ij} \tilde\EpotVect_i^{n+1}\,
  \tilde\lambda_j \big\{\EpotVect_j^{n+1} - \tilde\EpotVect_j^{n+1}\big\}}
  {\sum_{i,j} \mathcal{K}_{ij}
  \tilde\lambda_i \big\{\EpotVect_i^{n+1} - \tilde\EpotVect_i^{n+1}\big\}
  \tilde\lambda_j \big\{\EpotVect_j^{n+1} - \tilde\EpotVect_j^{n+1}\big\}}.
\end{gather*}


\bibliographystyle{siamplain}

\end{document}